\theoremstyle{plain}
\newtheorem{theorem}{Theorem}[section]
\newtheorem{proposition}[theorem]{Proposition}
\newtheorem*{theorem_i}{Theorem}
\newtheorem{claim}{Claim}
\newtheorem{corollary}[theorem]{Corollary}
\theoremstyle{definition}
\newtheorem{definition}[theorem]{Definition}
\newtheorem{example}[theorem]{Example}
\theoremstyle{remark}
\newtheorem{remark}[theorem]{Remark}
\newtheorem{notation}[theorem]{Notation}
\newtheorem*{acknowledgment*}{Acknowledgment}  
\newcommand{\weight}{\mathop{\mathrm{wt}}\nolimits}
\newcommand{\Glow}{G^{\mathrm{low}}}
\newcommand{\Gup}{G^{\mathrm{up}}}
\newcommand{\dprod}{\mathop{\overrightarrow{\prod}}\limits}
\newcommand{\Image}{\mathop{\mathrm{Im}}\nolimits}
\newcommand{\Hom}{\mathop{\mathrm{Hom}}\nolimits}
\newcommand{\Aut}{\mathop{\mathrm{Aut}}\nolimits}
\newcommand{\Ker}{\mathop{\mathrm{Ker}}\nolimits}
\newcommand{\spn}{\mathop{\mathrm{span}}\nolimits}
\newcommand{\fraction}{\mathop{\mathrm{Frac}}\nolimits}
\begin{document}
%%   Here starts the topmatter.
%% Text must not happen before \maketitle command!

\title[The Chamber Ansatz for quantum unipotent cells]
{The Chamber Ansatz for quantum unipotent cells}
\author{Hironori OYA}

\address[Hironori OYA]{Universit\'e Paris Diderot, Sorbonne Universit\'e, CNRS, Institut de Math\'ematiques de Jussieu - Paris Rive Gauche, IMJ-PRG, F-75013, Paris, FRANCE}

\email{hironori.oya@imj-prg.fr}
\thanks{The work of the author was supported by Grant-in-Aid for JSPS Fellows (No.~15J09231) and the Program for Leading Graduate Schools, MEXT, Japan. It was also supported by the European Research Council under the European Union's Framework Programme H2020 with ERC Grant Agreement number 647353 Qaffine, during the revision of this paper.}
\date{}

\maketitle

\begin{abstract}
In this paper, we prove quantum analogues of the Chamber Ansatz formulae for unipotent cells. These formulae imply that the quantum twist automorphisms, constructed by Kimura and the author, are generalizations of Berenstein-Rupel's quantum twist automorphisms for unipotent cells associated with the squares of acyclic Coxeter elements. This conclusion implies that the known compatibility between quantum twist automorphisms and dual canonical bases corresponds to the property conjectured by Berenstein and Rupel. 
\end{abstract}
%%   We insist on abstract in your paper!
\section{Introduction}\label{s:introduction}
\subsection{About this subject}\label{introabst}Totally positive elements of an arbitrary  connected semisimple algebraic group $G$ and some related varieties were introduced by Lusztig \cite{Lus:pos}. They are generalizations of totally positive matrices, which are defined as the square matrices such that all their minors are positive. The present paper concerns totally positive elements in \emph{unipotent cells}. A unipotent cell $N_-^w$ is a certain subvariey of a maximal unipotent subgroup $N_-$ of $G$ associated with an element $w$ of  the corresponding Weyl group. In \cite{Lus:pos}, Lusztig has proved that totally positive elements in a unipotent cell are parametrized by the tuple of positive real numbers via a birational map from an algebraic torus to a unipotent cell.
Berenstein, Fomin and Zelevinsky \cite{BFZ,BZ:Schu} have given effective criteria of total positivity in unipotent cells. Their research leads to their definition of \emph{cluster algebras}. The key step for the proof of their criteria is an explicit description of the inverse of the birational map above, which is called \emph{the Chamber Ansatz formulae}. The Chamber Ansatz formulae are given by \emph{the generalized minors} and \emph{twist automorphisms} on unipotent cells. In this paper, we consider the quantum analogues of all these situations. 
\subsection{Chamber Ansatz formulae}Let $\mathfrak{g}$ be a complex finite dimensional semisimple Lie algebra. (From Section 2, we consider the case that $\mathfrak{g}$ is an arbitrary symmetrizable Kac-Moody Lie algebra.) Let $\mathfrak{g}=\mathfrak{n}_-\oplus\mathfrak{h}\oplus \mathfrak{n}_+$ be a triangular decomposition of $\mathfrak{g}$, $G$ the corresponding simply-connencted connected algebraic group, $N_{\pm}$, $H$ the closed subgroups with Lie algebras $\mathfrak{n}_{\pm}$, $\mathfrak{h}$, respectively, $B_{\pm}:=HN_{\pm}$ the Borel subgroups and $W:=N_{G}\left(H\right)/H$ the Weyl group of $\mathfrak{g}$. For $w\in W$, \emph{a unipotent cell} is defined as the algebraic subvariety $N_-^w:=N_-\cap B_+\dot{w}B_+$ of $N_-$, where $\dot{w}$ is an arbitrary lift of $w$ to $N_G(H)$. Let $\{\alpha_i\mid i\in I\}$ (resp.~$\{h_i\mid i\in I\}$) be the set of the simple roots (resp.~coroots) of $\mathfrak{g}$, and $\{s_i\mid i\in I\}$ the set of simple reflections of $W$. Let $f_{i}$ be a root vector corresponding to the root $-\alpha_i$, and $\mathbb{C}\to N_-, t\mapsto \exp(tf_i)$ the $1$-parameter subgroup corresponding to $f_i$. Then for $w\in W$ and its reduced word $\bm{i}=(i_1,\dots, i_{\ell})$, there exists a map $y_{\bm{i}}\colon (\mathbb{C}^{\times})^{\ell}\to N_-^w$ given by 
\[
(t_1,\dots, t_{\ell})\mapsto \exp(t_1f_{i_1})\cdots\exp(t_{\ell}f_{i_{\ell}}).
\]
Then it is known that $y_{\bm{i}}$ is injective and its image is a Zariski open subset of $N_-^w$. See, for example, \cite[Proposition 2.18]{FZ}. The problem of finding explicit formulae for the inverse birational map $y_{\bm{i}}^{-1}$ is called \emph{the factorization problem}. By the way, an element $n\in N_-^w$ is \emph{totally positive} if and only if $n\in \Image y_{\bm{i}}$ and $y_{\bm{i}}^{-1}(n)\in \mathbb{R}_{>0}^{\ell}$ \cite[Proposition 2.7]{Lus:pos}. This problem is also formulated as follows: the map $y_{\bm{i}}$ induces an embedding of algebras 
\begin{align}
y_{\bm{i}}^{\ast}\colon\mathbb{C}[N_-^w]\to \mathbb{C}[ (\mathbb{C}^{\times})^{\ell}]\simeq \mathbb{C}[t_1^{\pm 1},\dots, t_{\ell}^{\pm1}].\label{clFeigin}
\end{align}
The problem is to describe each $t_{k}$ ($k=1,\dots, \ell$) as a rational function on $N_-^w$ explicitly. As mentioned in subsection \ref{introabst}, a solution of this problem is the Chamber Ansatz formula. 
Let $\varpi_i\in \Hom_{\rm alg\textrm{-}grp}(H, \mathbb{C}^{\times})$ be a fundamental weight corresponding to $i\in I$. Set $G_0:=N_-HN_+$ and, for $g\in G_0$, write the corresponding decomposition as $g=[g]_-[g]_0[g]_+$. For $i\in I$, denote by $\Delta_{\varpi_i, \varpi_i}$  the regular function on $G$ whose restriction to the open dense set $G_0$ is given by $\Delta_{\varpi_i, \varpi_i}(g):=\varpi_i([g]_0)$. Moreover, for $w_1, w_2\in W$, define $\Delta_{w_1\varpi_i, w_2\varpi_i}\in \mathbb{C}[G]$ by $\Delta_{w_1\varpi_i, w_2\varpi_i}(g)=\Delta_{\varpi_i, \varpi_i}(\overline{w}_1^{-1}g\overline{w}_2)$, where $\overline{w}_1$, $\overline{w}_2$ are specific lifts of $w_1$, $w_2$ to $N_{G}\left(H\right)$, respectively. These elements are called \emph{generalized minors}. 
Berenstein, Fomin and Zelevinsky introduced a biregular isomorphism $\eta_{w}\colon N_-^{w}\to N_-^{w}$ given by 
\[
n\mapsto [n^{T}\dot{w}]_-,
\]
here $n^T$ is a transpose of $n$ in $G$. This is called \emph{the twist automorphism on $N_-^w$}. Then the Chamber Ansatz formulae stand for the following description of $t_k$, $k=1,\dots, \ell$ \cite[Theorem 1.4]{BFZ}, \cite[Theorem 1.4]{BZ:Schu};
set $y:=y_{\bm{i}}(t_1,\dots,t_{\ell})$ and $w_{\leq m}:=s_{i_1}\cdots s_{i_m}$. Then,
\[
t_k=\frac{\prod_{j\in I\setminus\{i_k\}}\Delta_{w_{\le k}\varpi_{j}, \varpi_{j}}(\eta_{w}^{-1}(y))^{-a_{j, i_k}}}{\Delta_{w_{\leq k-1}\varpi_{i_k}, \varpi_{i_k}}(\eta_{w}^{-1}(y))\Delta_{w_{\leq k}\varpi_{i_k}, \varpi_{i_k}}(\eta_{w}^{-1}(y))},
\]
here $a_{ij}:=\langle h_i, \alpha_j\rangle$. 

\subsection{Quantum Chamber Ansatz formulae---Main result}A quantum analogue $\mathbf{A}_q[N_-^w]$ of the coordinate algebra $\mathbb{C}[N_-^{w}]$ is introduced in \cite{DP:qSch} and there are quantum analogues $[D_{w_1\varpi_{i}, w_2\varpi_i}]\in \mathbf{A}_q[N_-^w]$ of generalized minors on unipotent cells. There also exists a quantum analogue $\Phi_{\bm{i}}\colon \mathbf{A}_q[N_-^w]\to\mathcal{L}_{\bm{i}}$ of $y_{\bm{i}}^{\ast}$, which is known as the Feigin homomorphism \cite{Berenstein}. Here $\mathcal{L}_{\bm{i}}$ is a quantum torus in $\ell$-variables $t_1,\dots, t_{\ell}$. Moreover a quantum analogue $\eta_{w, q}\colon\mathbf{A}_q[N_-^w]\to\mathbf{A}_q[N_-^w]$ of the (dual of) the twist automorphism $\eta_w^{\ast}\colon \mathbb{C}[N_-^w]\to \mathbb{C}[N_-^w]$ is defined by Kimura and the author in  \cite{KO2}. Note that we do not have ``actual algebraic varieties'' but only have ``coordinate rings'' in the quantum settings. By using these materials, we obtain the Chamber Ansatz formula in the quantum settings.

\begin{theorem_i}[{Theorem \ref{t:monomial}, Corollary \ref{c:CA}}]
For $k=1,\dots, \ell$, we set 
\[
D_{w_{\leq k}\varpi_{i_{k}}, \varpi_{i_{k}}}^{\prime\; (\bm{i})}:=(\Phi_{\bm{i}}\circ \eta_{w, q}^{-1})([D_{w_{\leq k}\varpi_{i_{k}}, \varpi_{i_{k}}}]). 
\]
Then there exists an (explicit) integer $M_k$ such that 
\[
D_{w_{\leq k}\varpi_{i_{k}}, \varpi_{i_{k}}}^{\prime\; (\bm{i})}
=q^{M_k}t_{1}^{-d_1}t_{2}^{-d_2}\cdots t_{k}^{-d_k},
\]
where $d_j:=\langle w_{\leq j}h_{i_j}, w_{\leq k}\varpi_{i_k}\rangle$, $j=1,\dots, k$. These formulae deduce the following:
\[
t_k=q^{N_k}(D_{w_{\leq k-1}\varpi_{i_{k}}, \varpi_{i_{k}}}^{\prime\; (\bm{i})})^{-1}(D_{w_{\leq k}\varpi_{i_{k}}, \varpi_{i_{k}}}^{\prime\; (\bm{i})})^{-1}\dprod_{j\in I\setminus\{i_k\}}(D_{w_{\leq k}\varpi_{j}, \varpi_{j}}^{\prime\; (\bm{i})})^{-a_{j, i_k}}
\]
for some $N_k\in \mathbb{Z}$, here $\overrightarrow{\prod}$ stands for the ordered multiplication according to an arbitrarily fixed ordering on $I\setminus\{i_k\}$. 
\end{theorem_i}
This is a generalization of Berenstein-Rupel's result \cite[Corollary 1.2]{BR}. By this theorem, we can say that the quantum twist automorhism $\eta_{w, q}$ is a generalization of Berenstein-Rupel's quantum twist automorphism \cite[Theorem 2.10]{BR}, which has been constructed in the case that $w$ is the square of an acyclic Coxeter element, and corresponds to the one proposed in \cite[Conjecture 2.12 (c), Conjecture 6.20]{BR}. Moreover, its compatibility with the dual canonical basis, which is proved in \cite[Theorem 6.1]{KO2}, corresponds to Berenstein-Rupel's conjectural property  \cite[Conjecture  2.17 (a)]{BR}. See also Remark \ref{r:BR} below. 

The above theorem states the non-trivial monomiality of $(\Phi_{\bm{i}}\circ \eta_{q, w}^{-1})([D_{w_{\leq k}\varpi_{i_{k}}, \varpi_{i_{k}}}])$. In the appendix, we explain an explicit relation between this monomiality and the similar one appearing in the context of Cauchon-Goodearl-Letzter (CGL) extensions \cite{GeY, LeY}. It also would be interesting to understand this monomiality via categorifications. Actually, in non-quantum settings, Gei\ss-Leclerc-Schr\"{o}er have obtained an explanation by using their additive categorification \cite[Theorem 1, Theorem 2]{GLS:chamb}.

\subsection{Notation}The following are general notations in this paper.
\begin{itemize}
\item[(1)] For a $k$-algebra $\mathscr{A}$, we set $[a_{1},a_{2}]:=a_{1}a_{2}-a_{2}a_{1}$ for $a_{1},a_{2}\in\mathscr{A}$. \emph{An Ore set $\mathscr{M}$ of $\mathscr{A}$} stands for a left and right Ore set consisting of regular elements. Denote by $\mathscr{A}[\mathscr{M}^{-1}]$ the algebra of fractions with respect to the Ore set $\mathscr{M}$. In this case, $\mathscr{A}$ is naturally a subalgebra of $\mathscr{A}[\mathscr{M}^{-1}]$. See \cite[Chapter 6]{GoodWar} for more details.
\item[(2)] \emph{An $\mathscr{A}$-module $V$} means a left $\mathscr{A}$-module. The action of $\mathscr{A}$ on $V$ is denoted by $a.u$ for $a\in\mathscr{A}$ and $u\in V$.
\item[(3)] For two letters $i,j$, the symbol $\delta_{ij}$ stands for the Kronecker delta. 
\end{itemize}

\section{Preliminaries}
\subsection{Quantized enveloping algebras}
\begin{definition}
\label{d:rootdatum} A root datum consists of the following
data; 
\begin{enumerate}
\item $I$ : a finite index set, 
\item $\mathfrak{h}$ : a finite dimensional $\mathbb{Q}$-vector space, 
\item $P\subset\mathfrak{h}^{*}$ : a lattice, called \emph{the weight lattice}, 
\item $P^{\ast}=\left\{ h\in\mathfrak{h}\mid\left\langle h,P\right\rangle \subset\mathbb{Z}\right\}$,
called \emph{the coweight lattice}, 
with the canonical pairing $\left\langle \;,\;\right\rangle \colon P^{\ast}\times P\to\mathbb{Z}$, 
\item $\left\{ \alpha_{i}\right\} _{i\in I}\subset P$ : a subset, called the set of \emph{simple roots}, 
\item $\left\{ h_{i}\right\} _{i\in I}\subset P^{\ast}$ : a subset, called the set of \emph{simple coroots}, 
\item $\left(\;,\;\right)\colon P\times P\to\mathbb{Q}$ : a $\mathbb{Q}$-valued symmetric $\mathbb{Z}$-bilinear form on $P$. 
\end{enumerate}
satisfying the following conditions: 
\begin{enumerate}
\item[(a)] $\left(\alpha_{i},\alpha_{i}\right)\in2\mathbb{Z}_{>0}$ for $i\in I$, 
\item[(b)] $\left\langle h_{i},\lambda\right\rangle =2\left(\alpha_{i},\lambda\right)/\left(\alpha_{i},\alpha_{i}\right)$
for $\lambda\in P$ and $i\in I$, 
\item[(c)] $A=(a_{ij})_{i,j\in I}:=\left(\left\langle h_{i},\alpha_{j}\right\rangle \right)_{i,j\in I}$
is a symmetrizable generalized Cartan matrix, that is $\left\langle h_{i}, \alpha_{i} \right\rangle =2$, $\left\langle h_{i}, \alpha_{j} \right\rangle \in\mathbb{Z}_{\leq0}$ for $i\neq j$ and, $\left\langle h_{i},\alpha_{j}\right\rangle =0$ is equivalent to $\left\langle h_{j},\alpha_{i}\right\rangle =0$, 
\item[(d)] $\left\{ \alpha_{i}\right\} _{i\in I}\subset\mathfrak{h}^{*}$, $\left\{ h_{i}\right\} _{i\in I}\subset\mathfrak{h}$
are linearly independent subsets. 
\end{enumerate}
The $\mathbb{Z}$-submodule $Q=\sum_{i\in I}\mathbb{Z}\alpha_{i}\subset P$ is called \emph{the root lattice}. We set $Q_{+}=\sum\mathbb{Z}_{\geq0}\alpha_{i}\subset Q$ and $Q_{-}=-Q_{+}$. Let $P_{+}:=\left\{ \lambda\in P\mid\left\langle h_{i},\lambda\right\rangle \in\mathbb{Z}_{\geq0}\;\text{for all}\;i\in I\right\}$ and we assume that there exists $\left\{ \varpi_{i}\right\} _{i\in I}\subset P_{+}$ such that $\left\langle h_{i},\varpi_{j}\right\rangle =\delta_{ij}$. An element of $P_+$ is called \emph{a dominant integral weight}. 
\end{definition}
\begin{definition}
\label{d:Weyl} Let $W$ be \emph{the Weyl group} associated with the above root datum, that is, the group generated by $\{s_{i}\}_{i\in I}$ with the defining relations $s_i^2=e$ for $i\in I$ and $(s_is_j)^{m_{ij}}=e$ for $i, j\in I$, $i\neq j$. Here $e$ is the unit of $W$, $m_{ij}=2$ (resp.~$3, 4, 6, \infty$) if $a_{ij}a_{ji}=0$ (resp.~$1, 2, 3, \geq 4$), and $w^{\infty}:=e$ for any $w\in W$. We have the group homomorphisms $W\to\Aut\mathfrak{h}$ and $W\to\Aut\mathfrak{h}^{\ast}$ given by
\begin{align*}
s_{i}\left(h\right)&=h-\langle h, \alpha_{i}\rangle h_{i}&s_{i}\left(\mu\right)&=\mu-\langle h_{i},\mu\rangle\alpha_{i}
\end{align*}
for $h\in\mathfrak{h}$ and $\mu\in\mathfrak{h}^{\ast}$. For an element $w$ of $W$, $\ell(w)$ denotes the length of $w$, that is, the smallest integer $\ell$ such that there exist $i_{1},\dots,i_{\ell}\in I$
with $w=s_{i_{1}}\cdots s_{i_{\ell}}$. For $w\in W$, set 
\[
I(w):=\{\bm{i}=(i_{1},\dots,i_{\ell(w)})\in I^{\ell(w)}\mid w=s_{i_{1}}\cdots s_{i_{\ell(w)}}\}.
\]
An element of $I(w)$ is called \emph{a reduced word of $w$}. When fixing a reduced word $\bm{i}=(i_1,\dots, i_{\ell})\in I(w)$, we write $w_{\leq k}:=s_{i_1}\cdots s_{i_k}$ and $w_{k\leq}:=s_{i_k}\cdots s_{i_{\ell}}$ for $1\leq k\leq \ell$. Moreover set $w_{\leq 0}:=e$.
\end{definition}
\begin{notation} \label{n:indeterminate} Let $q$ be an indeterminate. Set 
\[
\begin{array}{l}
q_{i}:=q^{\frac{(\alpha_{i},\alpha_{i})}{2}},\ {\displaystyle [n]:=\frac{q^{n}-q^{-n}}{q-q^{-1}}\ \text{for\ }n\in\mathbb{Z},}\\
{\displaystyle \left[\begin{array}{c}
n\\
k
\end{array}\right]:=\begin{cases}
{\displaystyle \frac{[n][n-1]\cdots[n-k+1]}{[k][k-1]\cdots[1]}} & \text{if\ }n\in\mathbb{Z},k\in\mathbb{Z}_{>0},\\
1 & \text{if\ }n\in\mathbb{Z},k=0,
\end{cases}}\\
{\displaystyle [n]!:=[n][n-1]\cdots[1]\text{\ for\ }n\in\mathbb{Z}_{>0},[0]!:=1.}
\end{array}
\]
Note that $[n], \left[ \begin{array}{c} n\\k \end{array} \right]\in \mathbb{Z}[q^{\pm 1}]$ and $\displaystyle \left[ \begin{array}{c} n\\k \end{array} \right]=\displaystyle \frac{[n]!}{[k]![n-k]!}\ \text{if\ } n\geq k\geq 0$. For a rational function $R\in\mathbb{Q}(q)$, we define $R_{i}$ as the rational function obtained from $R$ by substituting $q$ by $q_{i}$. 
\end{notation} 
\begin{definition}
\label{d:QEA} \emph{The quantized enveloping algebra} $\mathbf{U}_q$ is the unital associative $\mathbb{Q}(q)$-algebra (associated
with $\left(P,I, \{ \alpha_{i}\} _{i\in I},\{ h_{i}\} _{i\in I},(\;,\;)\right)$) defined by the
generators 
\[
e_{i},f_{i}\;(i\in I),q^{h}\;(h\in P^{*}),
\]
and the relations (i)--(iv) below: 
\begin{enumerate}
\item[(i)] $q^{0}=1,\;q^{h}q^{h'}=q^{h+h'}$ for $h,h'\in P^{*}$,
\item[(ii)] $q^{h}e_{i}=q^{\langle h,\alpha_{i}\rangle}e_{i}q^{h},\;q^{h}f_{i}=q^{-\langle h,\alpha_{i}\rangle}f_{i}q^{h}$
for $h\in P^{*},i\in I$,
\item[(iii)] ${\displaystyle \left[e_{i},f_{j}\right]=\delta_{ij}\frac{k_{i}-k_{i}^{-1}}{q_{i}-q_{i}^{-1}}}$
for $i,j\in I$ where $k_{i}:=q^{\frac{(\alpha_{i},\alpha_{i})}{2}h_{i}}$,
\item[(iv)] ${\displaystyle \sum_{k=0}^{1-a_{ij}}(-1)^{k}\left[\begin{array}{c}
1-a_{ij}\\
k
\end{array}\right]_{i}x_{i}^{k}x_{j}x_{i}^{1-a_{ij}-k}=0}$ for $i,j\in I$ with $i\neq j$, and $x=e,f$. 
\end{enumerate}
The $\mathbb{Q}(q)$-subalgebra of $\mathbf{U}_q$ generated by $\{f_{i}\}_{i\in I}$ is denoted by $\mathbf{U}_q^{-}$. For $\alpha\in Q$, write $(\mathbf{U}_q)_{\alpha}:=\{x\in\mathbf{U}_q\mid q^{h}xq^{-h}=q^{\langle h, \alpha\rangle}x\;\text{for all}\;h\in P^{*}\}$.
The elements of $(\mathbf{U}_q)_{\alpha}$ are said to be \emph{homogeneous}. For
a homogeneous element $x\in(\mathbf{U}_q)_{\alpha}$, we set $\weight x=\alpha$. For any subset $X\subset\mathbf{U}_q$ and $\alpha\in Q$, we set $X_{\alpha}:=X\cap(\mathbf{U}_q)_{\alpha}$.
\end{definition}
\begin{definition}
\label{d:autom} Let $\vee\colon\mathbf{U}_q\to\mathbf{U}_q$ be the $\mathbb{Q}(q)$-algebra involution defined by 
\begin{align*}
e_{i}^{\vee} & =f_{i}, &  & f_{i}^{\vee}=e_{i}, &  & (q^{h})^{\vee}=q^{-h}.
\end{align*}
Let $\overline{\phantom{x}}\colon\mathbb{Q}(q)\to\mathbb{Q}(q)$,
$\overline{\phantom{x}}\colon\mathbf{U}_q\to\mathbf{U}_q$ be the $\mathbb{Q}$-algebra
involutions defined by 
\begin{align*}
\overline{q}=q^{-1}, &  & \overline{e_{i}}=e_{i}, &  & \overline{f_{i}}=f_{i}, &  & \overline{q^{h}} & =q^{-h}.
\end{align*}
Let $*,\varphi\colon\mathbf{U}_q\to\mathbf{U}_q$ be the $\mathbb{Q}\left(q\right)$-anti-algebra
involutions defined by 
\begin{align*}
*(e_{i}) & =e_{i}, & *(f_{i}) & =f_{i}, & *(q^{h}) & =q^{-h},\\
\varphi\left(e_{i}\right) & =f_{i}, & \varphi(f_{i}) & =e_{i}, & \varphi(q^{h}) & =q^{h}.
\end{align*}
Note that $\varphi=\vee\circ\ast=\ast\circ\vee$. 
\end{definition}
\begin{definition}\label{d:Lusform} Define the $\mathbb{Q}(q)$-bilinear form $(\ ,\ )_{L}\colon\mathbf{U}_q^{-}\times\mathbf{U}_q^{-}\to\mathbb{Q}(q)$ as follows. See, for example, \cite[Chapter 1]{Lus:intro} for more details: for $i\in I$, there uniquely exist the $\mathbb{Q}(q)$-linear
maps $e'_{i}$, $_{i}e'\colon\mathbf{U}_q^{-}\to\mathbf{U}_q^{-}$ satisfying
\[
\begin{array}{lc}
e'_{i}\left(xy\right)=e'_{i}\left(x\right)y+q_{i}^{\langle h_{i}, \weight x\rangle}xe'_{i}\left(y\right), & e'_{i}(f_{j})=\delta_{ij},\\
_{i}e'\left(xy\right)=q_{i}^{\langle h_{i}, \weight y\rangle}{_{i}e'}\left(x\right)y+x\;{_{i}e'}\left(y\right), & _{i}e'(f_{j})=\delta_{ij}
\end{array}
\]
for homogeneous elements $x,y\in\mathbf{U}_q^{-}$. Then there uniquely exists the symmetric $\mathbb{Q}(q)$-bilinear form satisfying
\begin{align}
(1,1)_{L}&=1,  &   (f_{i}x,y)_{L}&=\frac{1}{1-q_{i}^{2}}(x,e'_{i}(y))_{L},   & (xf_{i},y)_{L}&=\frac{1}{1-q_{i}^{2}}(x,{_{i}e'}(y))_{L}\label{eq:adjoint}
\end{align}
for $x, y\in \mathbf{U}_q^-$. In fact, $(\ ,\ )_{L}$ is nondegenerate and it has the following property:
\[
\left(\ast(x),\ast(y)\right)_{L}=\left(x,y\right)_{L}
\]
for all $x,y\in\mathbf{U}_q^{-}$.
\end{definition}
\subsection{Lusztig's braid group symmetries}
We present the definition of braid group actions on integrable modules and quantized enveloping algebras, and review their fundamental properties. All statements in this subsections can be found in \cite{Lus:intro, Saito:PBW}. 
\begin{definition}\label{d:intmodules}
Let $V$ be a $\mathbf{U}_q$-module. For $\mu \in P$, we set
\[
V_{\mu}:= \{u \in V \mid q^h.u=q^{\langle h, \mu \rangle}u\ \text{\ for\ all\ } h \in P^{\ast}\}.
\]
This is called \emph{the weight space of $V$ of weight $\mu$}, and for $u\in V_{\mu}$, we write $\weight u:=\mu$. A $\mathbf{U}_q$-module $V=\bigoplus_{\mu\in P}V_{\mu}$ with weight space decomposition is said to be \emph{integrable} if $e_i$ and $f_i$ act locally nilpotently on $V$ for all $i\in I$. 
\end{definition}
\begin{definition}\label{d:repform}
For $\lambda\in P_{+}$, denote by $V(\lambda)$ the integrable highest weight $\mathbf{U}_q$-module generated by a highest weight vector $u_{\lambda}$ of weight $\lambda$. Note that $V(\lambda)$ is irreducible. There exists a unique $\mathbb{Q}(q)$-bilinear form $(\;,\;)_{\lambda}^{\varphi}\colon V(\lambda)\times V(\lambda)\to\mathbb{Q}(q)$ such that
\begin{align*}
 \left(u_{\lambda},u_{\lambda}\right)_{\lambda}^{\varphi} & =1 & (x.u_{1},u_{2})_{\lambda}^{\varphi} & =(u_{1},\varphi(x).u_{2})_{\lambda}^{\varphi}
\end{align*}
for $u_{1},u_{2}\in V(\lambda)$ and $x\in\mathbf{U}_q$. Moreover the form $(\;,\;)_{\lambda}^{\varphi}$ is nondegenerate and symmetric. There exists the $\mathbb{Q}$-linear automorphism $\overline{\phantom{x}}\colon V(\lambda)\to V(\lambda)$ given by $\overline{x.u_{\lambda}}=\overline{x}.u_{\lambda}$ for $x\in \mathbf{U}_q$.

For $w\in W$, define the element $u_{w\lambda}\in V(\lambda)$ by
\begin{align*}
u_{w\lambda}=f_{i_{1}}^{(\langle h_{i_{1}}, s_{i_{2}}\cdots s_{i_{\ell}}\lambda\rangle)}\cdots f_{i_{\ell-1}}^{(\langle h_{i_{\ell-1}}, s_{i_{\ell}}\lambda\rangle)}f_{i_{\ell}}^{(\langle h_{i_{\ell}}, \lambda\rangle)}.u_{\lambda}
\end{align*}
for $(i_{1},\dots,i_{\ell})\in I(w)$. It is known that this element does not depend on the choice of $(i_{1},\dots,i_{\ell})\in I(w)$ and $w\in W$. See, for example, \cite[Proposition 39.3.7]{Lus:intro}. Then $\left(u_{w\lambda},u_{w\lambda}\right)_{\lambda}^{\varphi}=1$ and $\overline{u_{w\lambda}}=u_{w\lambda}$. 
\end{definition}
\begin{definition}
\label{d:repbraid} Let $V=\bigoplus_{\mu\in P}V_{\mu}$ be an integrable $\mathbf{U}_q$-module. We can define a $\mathbb{Q}(q)$-linear automorphism $T_{i}\colon V\to V$ for $i\in I$ by 
\[
T_{i}(u):=\sum_{-a+b-c=\langle h_{i},\mu\rangle}(-1)^{b}q_{i}^{-ac+b}e_{i}^{(a)}f_{i}^{(b)}e_{i}^{(c)}.u
\]
for $u\in V_{\mu}$ and $\mu\in P$. 
\end{definition}
\begin{definition}\label{d:braidaction}
We can define a $\mathbb{Q}\left(q\right)$-algebra automorphism $T_{i}\colon\mathbf{U}_q\to\mathbf{U}_q$ for $i\in I$ by the following formulae: 
\begin{subequations}
\begin{align*}
T_{i}(q^{h}) & =q^{s_{i}\left(h\right)},\\
T_{i}\left(e_{j}\right) & =\begin{cases}
-f_{i}k_{i} & \text{for}\;j=i,\\
{\displaystyle \sum_{r+s=-\left\langle h_{i},\alpha_{j}\right\rangle }\left(-1\right)^{r}q_{i}^{-r}e_{i}^{\left(s\right)}e_{j}e_{i}^{\left(r\right)}} & \text{for}\;j\neq i,
\end{cases}\\
T_{i}\left(f_{j}\right) & =\begin{cases}
-k_{i}^{-1}e_{i} & \text{for}\;j=i,\\
{\displaystyle \sum_{r+s=-\left\langle h_{i},\alpha_{j}\right\rangle }\left(-1\right)^{r}q_{i}^{r}f_{i}^{\left(r\right)}f_{j}f_{i}^{\left(s\right)}} & \text{for}\;j\neq i.
\end{cases}
\end{align*}
\end{subequations} 
\end{definition}
The following are fundamental properties of $T_i$. 
\begin{proposition}\label{p:braid} 
Let $V$ be an integrable $\mathbf{U}_q$-module. 
\begin{enumerate}
\item[(1)] For  $i\in I$, $T_{i}(x.u)=T_{i}(x).T_{i}(u)$ for $u\in V$ and $x\in \mathbf{U}_q$.

\item[(2)] For $w\in W$, the composition maps $T_{w}:=T_{i_{1}}\cdots T_{i_{\ell}}\colon V\to V$, $\mathbf{U}_q\to \mathbf{U}_q$ do not depend on the choice of $(i_{1},\dots,i_{\ell})\in I(w)$. 

\item[(3)] For $\mu\in P$ and $w\in W$, $T_{w}$ maps $V_{\mu}$ to $V_{w\mu}$. 
\end{enumerate}
\end{proposition}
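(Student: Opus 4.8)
All three assertions are classical; the plan is to follow the treatment of \cite{Lus:intro} (see also \cite{Saito:PBW}), establishing (3) first, then (1), and finally bootstrapping to (2). For the $w=s_{i}$ case of (3) one argues directly from the definition: if $u\in V_{\mu}$, then each summand $e_{i}^{(a)}f_{i}^{(b)}e_{i}^{(c)}.u$ of $T_{i}(u)$ has weight $\mu+(a-b+c)\alpha_{i}$, and the constraint $-a+b-c=\langle h_{i},\mu\rangle$ in Definition \ref{d:repbraid} forces this weight to equal $\mu-\langle h_{i},\mu\rangle\alpha_{i}=s_{i}\mu$, so $T_{i}(V_{\mu})\subseteq V_{s_{i}\mu}$. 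Once (2) is available, composing along any $(i_{1},\dots,i_{\ell})\in I(w)$ gives $T_{w}(V_{\mu})\subseteq V_{s_{i_{1}}\cdots s_{i_{\ell}}\mu}=V_{w\mu}$, which completes (3).

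For (1), since $\Uq$ is generated by the $e_{j},f_{j}$ $(j\in I)$ and the $q^{h}$ $(h\in P^{\ast})$, I would check $T_{i}(x.u)=T_{i}(x).T_{i}(u)$ only for these generators, and, decomposing $u$ by weight, only for $u\in V_{\mu}$. The case $x=q^{h}$ is immediate from the $w=s_{i}$ case of (3): since $s_{i}$ preserves the pairing,
\[
T_{i}(q^{h}).T_{i}(u)=q^{s_{i}(h)}.T_{i}(u)=q^{\langle s_{i}(h),s_{i}\mu\rangle}T_{i}(u)=q^{\langle h,\mu\rangle}T_{i}(u)=T_{i}(q^{h}.u).
\]
For $x=e_{i}$ or $f_{i}$, plugging in the explicit formulae of Definitions \ref{d:repbraid} and \ref{d:braidaction} reduces the identity to one inside the rank-one subalgebra $\langle e_{i},f_{i},t_{i}^{\pm1}\rangle$ acting on the integrable submodule it generates from $u$; for $x=e_{j}$ or $f_{j}$ with $j\neq i$ it reduces to the analogous computation in the rank-two subalgebra attached to $\{i,j\}$. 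These are precisely the verifications carried out in \cite[Chapter~37]{Lus:intro}.

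For (2), I would first prove the braid relations $\underbrace{T_{i}T_{j}T_{i}\cdots}_{m_{ij}}=\underbrace{T_{j}T_{i}T_{j}\cdots}_{m_{ij}}$ (with $m_{ij}<\infty$) as automorphisms of $\Uq$. Two algebra automorphisms agree once they agree on the generators $e_{k},f_{k},q^{h}$: on $q^{h}$ both sides realize the $W$-action on $P^{\ast}$, which satisfies the braid relations by the defining relations of $W$; on $e_{k},f_{k}$ only the rank-two subalgebra generated by $e_{i},e_{j},f_{i},f_{j}$ and the torus is involved, so the identity reduces to a finite check over the rank-two types $A_{1}\times A_{1}$, $A_{2}$, $B_{2}$, $G_{2}$. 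This finite but lengthy rank-two computation, carried out in \cite[Chapter~39]{Lus:intro}, is the step I expect to be the main obstacle. Granting it, the braid relations on $V$ follow from (1): writing $R$, $R'$ for the two sides, for all $x\in\Uq$ and $u\in V$ we obtain $R(x).R(u)=R(x.u)=R'(x.u)=R'(x).R'(u)=R(x).R'(u)$, so $R(x).(R(u)-R'(u))=0$ for every $x$; taking $x=1$ gives $R(u)=R'(u)$. Finally, by Matsumoto's theorem any two reduced words of $w$ are linked by braid moves, so $T_{w}:=T_{i_{1}}\cdots T_{i_{\ell}}$ depends only on $w$, both on $V$ and on $\Uq$; this yields (2) and, as noted above, completes (3).
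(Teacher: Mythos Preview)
The paper does not prove this proposition; it simply records it as a known fact with the blanket citation ``All statements in this subsection are found, for example, in \cite{Lus:intro}, \cite{Saito:PBW}.'' So there is no in-paper argument to compare against, and your outline is essentially an attempt to reconstruct the proof from those references. Most of it is fine: the weight computation for $T_{i}(V_{\mu})\subseteq V_{s_{i}\mu}$ is correct, the reduction of (1) to generators and then to rank-one and rank-two computations is the standard route, and the reduction of the braid relations on $\Uq$ to rank-two checks is exactly what Lusztig does.

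There is, however, a genuine gap in your deduction of the braid relations on $V$ from those on $\Uq$. In the chain
\[
R(x).R(u)=R(x.u)=R'(x.u)=R'(x).R'(u)=R(x).R'(u),
\]
the middle equality $R(x.u)=R'(x.u)$ is precisely the statement $R=R'$ on $V$ applied to the vector $x.u$, which is what you are trying to prove. What the two outer equalities (from (1)) together with $R(x)=R'(x)$ actually give is
\[
R(x).\bigl(R(u)-R'(u)\bigr)=R(x.u)-R'(x.u),
\]
so knowing $R=R'$ on a vector $u$ propagates to all of $\Uq.u$, but you still need a seed vector on which $R$ and $R'$ are already known to agree. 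Setting $x=1$ just returns the tautology $R(u)-R'(u)=R(u)-R'(u)$. In \cite[Chapter~39]{Lus:intro} the braid relations for the operators on integrable modules are established by a direct rank-two computation on modules (using weight strings / explicit $\mathfrak{sl}_{2}$-type formulae), not by bootstrapping from the algebra case; the seed is effectively provided by computing both sides on suitable extremal or highest weight vectors in the rank-two situation. You should replace your circular step by that direct module-level verification (or, alternatively, invoke the realization of $T_{i}$ via elements of a completion of $\Uq$, from which the module relations genuinely follow from the algebra relations---but that is a different and heavier argument).
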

\begin{proposition}\label{p:extremal} 
Let $V$ be an integrable $\mathbf{U}_q$-module and $i\in I$. Then, for $u\in V_{\mu}\cap \Ker (e_i.)$ and $u'\in V_{\mu'}\cap \Ker (f_i.)$, we have
\begin{align*}
T_i^{-1}(u)&=f_i^{(\langle h_i, \mu\rangle)}.u& T_i(u')&=e_i^{(-\langle h_i, \mu'\rangle)}.u'.
\end{align*}
In particular, for $\lambda\in P_{+}$ and $w\in W$, we have 
\begin{align*}
u_{w\lambda}=(T_{w^{-1}})^{-1}(u_{\lambda}).
\end{align*}
\end{proposition}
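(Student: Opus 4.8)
The plan is to prove the two displayed identities by a short direct computation from the definitions, and then to deduce the formula for $u_{w\lambda}$ by induction on $\ell(w)$.

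For the first identity, fix $u\in V_\mu\cap\Ker(e_i.)$ and set $n:=\langle h_i,\mu\rangle$. Since a nonzero vector annihilated by $e_i$ in an integrable module generates a finite-dimensional module over the rank-one subalgebra $U_{q_i}(\mathfrak{sl}_2)$ (generated by $e_i,f_i,t_i^{\pm1}$) with $u$ as a highest weight vector, we have $n\ge 0$, and $u=0$ otherwise, making the identity trivial. The key point is that the sum defining $T_i$ in Definition~\ref{d:repbraid} collapses on such $u$: we have $e_i^{(c)}.u=0$ for $c\ge 1$, while integrability gives $f_i^{(b)}.u=0$ for $b>n$, so only the term $(a,b,c)=(0,n,0)$ contributes and $T_i(u)=(-1)^nq_i^{\,n}f_i^{(n)}.u$. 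On the other hand, from Definition~\ref{d:braidaction} together with the relation $e_it_i^{-1}=q_i^{2}t_i^{-1}e_i$ one computes $T_i(f_i^{(n)})=(-1)^nq_i^{\,n(n-1)}t_i^{-n}e_i^{(n)}$. Combining these via Proposition~\ref{p:braid}(1),
\[
T_i\bigl(f_i^{(n)}.u\bigr)=T_i(f_i^{(n)}).T_i(u)=q_i^{\,n^2}\,t_i^{-n}.\bigl(e_i^{(n)}f_i^{(n)}.u\bigr),
\]
and since $e_i^{(n)}f_i^{(n)}.u=u$ (the standard rank-one identity for an $e_i$-highest vector of $h_i$-weight $n$) and $t_i^{-n}$ acts on $V_\mu$ by $q_i^{-n^2}$, the right-hand side equals $u$. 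Hence $T_i^{-1}(u)=f_i^{(n)}.u$. The second identity then follows: given $u'\in V_{\mu'}\cap\Ker(f_i.)$ with $m:=-\langle h_i,\mu'\rangle$ (and $u'=0$ unless $m\ge 0$), the vector $u:=e_i^{(m)}.u'$ lies in $\Ker(e_i.)$, has $h_i$-weight $m$, and satisfies $f_i^{(m)}.u=u'$ by the mirror rank-one identity, so $T_i^{-1}(u)=f_i^{(m)}.u=u'$, whence $T_i(u')=u=e_i^{(m)}.u'$.

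For the last assertion I would induct on $\ell(w)$, the case $w=e$ being clear. If $\ell(w)=\ell\ge 1$, choose $(i_1,\dots,i_\ell)\in I(w)$ and put $w':=s_{i_2}\cdots s_{i_\ell}$, so that $w=s_{i_1}w'$, $\ell(w')=\ell-1$, $(i_2,\dots,i_\ell)\in I(w')$ and $\ell(s_{i_1}w')>\ell(w')$. Using Proposition~\ref{p:braid}(2) on the reduced words $s_{i_\ell}\cdots s_{i_1}$ of $w^{-1}$ and $s_{i_\ell}\cdots s_{i_2}$ of $(w')^{-1}$, one finds $(T_{w^{-1}})^{-1}=T_{i_1}^{-1}\circ(T_{(w')^{-1}})^{-1}$, so by the induction hypothesis $(T_{w^{-1}})^{-1}(u_\lambda)=T_{i_1}^{-1}(u_{w'\lambda})$. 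Now $e_{i_1}.u_{w'\lambda}=0$ because $s_{i_1}w'>w'$ (a standard property of extremal weight vectors; see \cite{Lus:intro}, \cite{Saito:PBW}), and then $\langle h_{i_1},w'\lambda\rangle\ge 0$ automatically, so the first identity gives $T_{i_1}^{-1}(u_{w'\lambda})=f_{i_1}^{(\langle h_{i_1},w'\lambda\rangle)}.u_{w'\lambda}$; and this equals $u_{w\lambda}$ directly from the definition of $u_{w\lambda}$ for the reduced word $(i_1,\dots,i_\ell)\in I(w)$, since $f_{i_2}^{(\cdots)}\cdots f_{i_\ell}^{(\cdots)}.u_\lambda=u_{w'\lambda}$.

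I expect the only points requiring attention to be the two truncations of the triple sum defining $T_i$ — which rest on the vanishings $e_i^{(c)}.u=0$ for $c\ge 1$ and $f_i^{(b)}.u=0$ for $b>n$ coming from integrability — together with the elementary $U_{q_i}(\mathfrak{sl}_2)$-identities $e_i^{(n)}f_i^{(n)}.u=u$ and $f_i^{(m)}e_i^{(m)}.u'=u'$ for highest, respectively lowest, weight vectors; none of these is deep. In the last step, the input $e_i.u_{w\lambda}=0$ when $s_iw>w$ is the one fact imported from the general theory of extremal weight vectors rather than checked on the spot.
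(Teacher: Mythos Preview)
Your argument is correct. The paper itself gives no proof of this proposition: it is stated as a known fact, with the blanket sentence ``All statements in this subsection are found, for example, in \cite{Lus:intro}, \cite{Saito:PBW}'' at the beginning of the subsection. What you have written is therefore more than the paper provides --- a clean, self-contained verification rather than a citation. Your reduction of the triple sum for $T_i(u)$ to the single term $(a,b,c)=(0,n,0)$ is right (using $e_i.u=0$ and $f_i^{(n+1)}.u=0$ from integrability), the computation $T_i(f_i^{(n)})=(-1)^n q_i^{\,n(n-1)} t_i^{-n} e_i^{(n)}$ is correct, and the rank-one identity $e_i^{(n)}f_i^{(n)}.u=u$ finishes the first formula; the second follows by the duality you describe. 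The induction for $u_{w\lambda}=(T_{w^{-1}})^{-1}(u_\lambda)$ is exactly how one unwinds the definition, and the only external input --- that $e_{i_1}.u_{w'\lambda}=0$ when $s_{i_1}w'>w'$ --- is indeed standard (and implicit in Definition~\ref{d:repform}, since $u_{w'\lambda}$ is a highest-weight vector for the relevant $\mathfrak{sl}_2$ precisely because $\langle h_{i_1},w'\lambda\rangle\ge 0$ in that case).
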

\begin{proposition}\label{p:kernel}

\textup{(1)} For $i\in I$, we have $\Ker e'_{i}=\mathbf{U}_q^{-}\cap T_{i}\mathbf{U}_q^{-}$ and $\Ker {_ie'}=\mathbf{U}_q^{-}\cap T_{i}^{-1}\mathbf{U}_q^{-}$. 

\textup{(2)} For $i\in I$ and $x,y\in\Ker e'_{i}$, we have $\left(x,y\right)_{L}=\left(T_{i}^{-1}(x),T_{i}^{-1}(y)\right)_{L}$.
\end{proposition}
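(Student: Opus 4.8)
The plan is to reduce both parts to the single assertion
\[
(\star)\qquad T_i\ \text{restricts to a bijection}\quad \Ker{}_ie'\ \xrightarrow{\ \sim\ }\ \Ker e'_i,
\]
the statement of (2) then being the companion fact that this bijection is an isometry for $(\ ,\ )_L$. The reductions use the anti-involution $*$: from the defining identities for $(\ ,\ )_L$ and their $*$-invariance one gets ${}_ie'=*\circ e'_i\circ *$, hence $*(\Ker e'_i)=\Ker{}_ie'$; moreover $*\circ T_i\circ *=T_i^{-1}$ (checked on the generators $e_j,f_j,q^{h}$), and $*$ preserves $\Uq^-$. Granting $(\star)$: the inclusion $\Ker e'_i=T_i(\Ker{}_ie')\subseteq\Uq^-\cap T_i\Uq^-$ is immediate; conversely, if $z=T_i(y)\in\Uq^-$ with $y\in\Uq^-$, then using the standard rank-one decomposition $\Uq^-=\bigoplus_{n\ge0}(\Ker{}_ie')f_i^{(n)}$ (the $*$-image of $\Uq^-=\bigoplus_{n\ge0}f_i^{(n)}(\Ker e'_i)$) write $y=\sum_{n\ge0}y_nf_i^{(n)}$ with $y_n\in\Ker{}_ie'$; since $T_i(y_n)\in\Ker e'_i\subseteq\Uq^-$ by $(\star)$ and $T_i(f_i^{(n)})\in\mathbb{Q}(q)^{\times}\,t_i^{-n}e_i^{(n)}$, comparing $z=\sum_nT_i(y_n)T_i(f_i^{(n)})$ inside the triangular decomposition $\Uq=\Uq^-\Uq^0\Uq^+$ forces $y_n=0$ for $n\ge1$, so $y=y_0\in\Ker{}_ie'$ and $z\in\Ker e'_i$. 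Thus $\Ker e'_i=\Uq^-\cap T_i\Uq^-$, and applying $*$ gives $\Ker{}_ie'=\Uq^-\cap T_i^{-1}\Uq^-$, proving (1). For (2), granting the isometry part of $(\star)$: for $x,y\in\Ker e'_i$, putting $a=T_i^{-1}x,\ b=T_i^{-1}y\in\Ker{}_ie'$ gives $(x,y)_L=(T_ia,T_ib)_L=(a,b)_L=(T_i^{-1}x,T_i^{-1}y)_L$.

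To prove $(\star)$ and its isometry part I would carry out the rank-one reduction via the integrable highest weight modules $V(\lambda)$. Fix $\lambda\in P_+$ with $\langle h_j,\lambda\rangle$ large for all $j$, so that $(\Uq^-)_{-\beta}\to V(\lambda)_{\lambda-\beta}$, $x\mapsto x.u_\lambda$, is injective in each fixed degree and $(x.u_\lambda,y.u_\lambda)_\lambda^{\varphi}$ recovers $(x,y)_L$ up to an explicit nonzero scalar. Since $u_\lambda\in\Ker(e_i.)$ and $u_{s_i\lambda}\in\Ker(f_i.)$, Proposition \ref{p:extremal} gives $T_i^{-1}(u_\lambda)=u_{s_i\lambda}$ and $T_i(u_{s_i\lambda})=u_\lambda$, whence by Proposition \ref{p:braid}(1) one has $T_i^{-1}(x.u_\lambda)=T_i^{-1}(x).u_{s_i\lambda}$ for all $x\in\Uq$. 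One then describes the image of $\Ker e'_i$ inside $V(\lambda)$ along the $i$-th $\mathfrak{sl}_2$-strings — the key being that for $x\in\Ker e'_i$ the commutator $[e_i,x]$ loses its $e'_i$-term, so $e_i.(x.u_\lambda)=c\,({}_ie'x).u_\lambda$ with $c\in\mathbb{Q}(q)^{\times}$ — and matches it, through the identity $T_i^{-1}(x.u_\lambda)=T_i^{-1}(x).u_{s_i\lambda}$, with the analogous image of $\Ker{}_ie'$ based at $u_{s_i\lambda}$; this places $T_i^{-1}(x)$ inside $\Ker{}_ie'\subseteq\Uq^-$, and symmetrically $T_i(\Ker{}_ie')\subseteq\Ker e'_i$, so $T_i$ and $T_i^{-1}$ are inverse bijections between these two subalgebras. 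Tracking the contravariant forms through the same comparison, with the correct normalizations, yields the isometry.

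I expect the main obstacle to be precisely this intrinsic description of the image of $\Ker e'_i$ in $V(\lambda)$ and the determination of its behaviour under $T_i$: the naive guesses are false — the image of $\Ker e'_i$ is not $\Ker(e_i.)\cap V(\lambda)$, and $T_i$ does not preserve the contravariant form on $V(\lambda)$ — so the argument genuinely requires the finer $i$-$\mathfrak{sl}_2$-string bookkeeping, performed uniformly in $\lambda$, together with a careful match of the normalization scalars for the isometry. This is the technical heart of this circle of results; it is carried out in detail in \cite{Lus:intro} (Chapter 38) and \cite{Saito:PBW} (Section 1), which one may alternatively invoke directly.
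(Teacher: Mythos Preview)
The paper does not prove this proposition: it is stated in the preliminaries with the blanket reference ``All statements in this subsection are found, for example, in \cite{Lus:intro}, \cite{Saito:PBW}'', and no argument is given. Your sketch is therefore not to be compared against a proof in the paper but against the standard literature proofs, which you yourself cite at the end.

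Your outline is essentially the standard route. The reduction via $*$ is correct: one has ${}_ie'=*\circ e'_i\circ *$ and, with the paper's $T_i$ (Lusztig's $T''_{i,1}$), the identity $*\circ T_i\circ *=T_i^{-1}$ holds (this is \cite[37.2.4]{Lus:intro}). Granting $(\star)$, your triangular-decomposition argument for the reverse inclusion $\Uq^-\cap T_i\Uq^-\subseteq\Ker e'_i$ is clean and correct. The remaining content---that $T_i$ carries $\Ker{}_ie'$ isometrically onto $\Ker e'_i$---is exactly Lusztig's \cite[Proposition 38.1.6 and Proposition 38.2.1]{Lus:intro} (see also \cite[\S1]{Saito:PBW}); your heuristic via $V(\lambda)$ points in the right direction, and you are right to flag that the naive identifications fail and that the honest proof is the $\mathfrak{sl}_2$-string analysis carried out in those references. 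One small inaccuracy: the relation between $(x.u_\lambda,y.u_\lambda)_\lambda^{\varphi}$ and $(x,y)_L$ is not a single global scalar but a weight-dependent renormalization, so ``up to an explicit nonzero scalar'' should be read as ``up to an explicit weight-dependent factor''; since you ultimately defer to \cite{Lus:intro} and \cite{Saito:PBW} for the isometry, this does not affect the validity of the sketch.
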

\subsection{Canonical/Dual canonical bases}
Canonical bases($=$lower global bases) are defined by Lusztig \cite{Lus:can1,Lus:quiper,Lus:intro} and Kashiwara \cite{Kas:Qana} independently. In this subsection, we briefly review the definitions of canonical bases of $\mathbf{U}_q^-$ and $V(\lambda)$, $\lambda\in P_+$, following Kashiwara \cite{Kas:Qana}. Let $\mathcal{A}_0$ be the subalgebra of $\mathbb{Q}(q)$ consisting of rational functions without poles at $q=0$. Set $\mathcal{A}:=\mathbb{Q}[q^{\pm 1}]$. 
\begin{definition}\label{d:canonicalbases}
For $i\in I$, we have $\mathbf{U}_q^-=\bigoplus_{k\in \mathbb{Z}_{\geq 0}}f_i^{(k)}\Ker e'_i$ \cite[3.5]{Kas:Qana}. Hence we can define the $\mathbb{Q}(q)$-linear maps $\tilde{e}_i$, $\tilde{f}_i\colon \mathbf{U}_q^-\to \mathbf{U}_q^-$ by
\begin{align*}
\tilde{e}_i(f_i^{(k)}u)=f_i^{(k-1)}u&&& \tilde{f}_i(f_i^{(k)}u)=f_i^{(k+1)}u
\end{align*}
for $u\in \Ker e'_i$ where $f_i^{(-1)}u:=0$. We have $\tilde{e}_i\circ \tilde{f}_i=\mathrm{id}_{\mathbf{U}_q^-}$. Set
\begin{align*}
\mathscr{L}(\infty)&:=\sum_{\ell\geq 0, i_1,\dots , i_{\ell}\in I}\mathcal{A}_0\tilde{f}_{i_1}\cdots\tilde{f}_{i_{\ell}}1 \subset \mathbf{U}_q^-,\\
\mathscr{B}(\infty)&:=\{\tilde{f}_{i_1}\cdots\tilde{f}_{i_{\ell}}1\ \mathrm{mod}\ q\mathscr{L}(\infty)\mid \ell\geq 0, i_1,\dots, i_{\ell}\in I\} \subset \mathscr{L}(\infty)/q\mathscr{L}(\infty).
\end{align*}
Henceforth write $\tilde{b}_{\infty}:=1\ \mathrm{mod}\ q\mathscr{L}(\infty)$. The pair $(\mathscr{L}(\infty), \mathscr{B}(\infty))$ satisfies the following properties \cite[Theorem 4]{Kas:Qana}:
\begin{itemize}
\item[(i)] $\mathscr{L}(\infty)$ is a free $\mathcal{A}_0$-module and $\mathbb{Q}(q)\otimes_{\mathcal{A}_0}\mathscr{L}(\infty)\simeq \mathbf{U}_q^-$,
\item[(ii)] $\mathscr{B}(\infty)$ is a basis of the $\mathbb{Q}$-vector space $\mathscr{L}(\infty)/q\mathscr{L}(\infty)$,
\item[(iii)] $\tilde{e}_i\mathscr{L}(\infty)\subset \mathscr{L}(\infty)$ and $\tilde{f}_i\mathscr{L}(\infty)\subset \mathscr{L}(\infty)$ for all $i\in I$,
\item[(iv)] $\tilde{e}_i$ and $\tilde{f}_i$ induce $\tilde{e}_i\colon \mathscr{B}(\infty)\to \mathscr{B}(\infty)\coprod \{ 0\}$ and $\tilde{f}_i\colon \mathscr{B}(\infty)\to \mathscr{B}(\infty)$, respectively, for all $i\in I$,
\item[(v)] For $\tilde{b}\in \mathscr{B}(\infty)$ with $\tilde{e}_i\tilde{b}\in \mathscr{B}(\infty)$, we have $\tilde{b}=\tilde{f}_i\tilde{e}_i\tilde{b}$.
\end{itemize}
This pair $(\mathscr{L}(\infty), \mathscr{B}(\infty))$ is called \emph{the (lower) crystal basis of $\mathbf{U}_q^-$}. For $i\in I$, define the maps $\varepsilon_i$, $\varphi_i\colon \mathscr{B}(\infty)\to \mathbb{Z}$  by
\begin{align*}
\varepsilon_i(\tilde{b})=\max\{ k\in \mathbb{Z}_{\geq 0}\mid \tilde{e}_i^k \tilde{b}\neq 0\}&&& \varphi_i(\tilde{b})=\varepsilon_i(\tilde{b})+\langle h_i, \weight \tilde{b}\rangle,
\end{align*}
for $\tilde{b}\in \mathscr{B}(\infty)$. Then the sextuple $(\mathscr{B}(\infty); \weight, \{\tilde{e}_i\}_{i\in I}, \{\tilde{f}_i\}_{i\in I}, \{\varepsilon_i\}_{i\in I}, \{\varphi_i\}_{i\in I})$ is a crystal in the sense of \cite{Kas:Dem}.

Moreover we have $\ast(\mathscr{L}(\infty))=\mathscr{L}(\infty)$ and $\ast(\mathscr{B}(\infty))=\mathscr{B}(\infty)$ \cite[Proposition 5.2.4]{Kas:Qana}, \cite[Theorem 2.1.1]{Kas:Dem}. Hence we can define a new crystal $(\mathscr{B}(\infty); \weight, \{\tilde{e}_i^{\ast}\}_{i\in I}, \{\tilde{f}_i^{\ast}\}_{i\in I}, \{\varepsilon_i^{\ast}\}_{i\in I}, \{\varphi_i^{\ast}\}_{i\in I})$ by
\[
\varepsilon_i^{\ast}:=\varepsilon_i\circ\ast, \varphi_i^{\ast}:=\varphi_i\circ\ast, \tilde{e}_i^{\ast}:=\ast\circ\tilde{e}_i\circ\ast, \tilde{f}_i^{\ast}:=\ast\circ\tilde{f}_i\circ\ast. 
\]
Note that $\varepsilon_i^{\ast}(\tilde{b})=\max\{k\in \mathbb{Z}_{\geq 0}\mid (\tilde{e}_i^{\ast})^k\tilde{b}\neq 0\}$. 

Let $\mathbf{U}_{\mathcal{A}}^-$ be the $\mathcal{A}$-subalgebra of $\mathbf{U}_q^-$ generated by $\{f_i^{(k)}\}_{i\in I, k\in \mathbb{Z}_{\geq 0}}$. Then the canonical map
\begin{align*}
\mathscr{L}(\infty)\cap \overline{\mathscr{L}(\infty)}\cap\mathbf{U}_{\mathcal{A}}^-\to \mathscr{L}(\infty)/q\mathscr{L}(\infty)
\end{align*}
is an isomorphism of $\mathbb{Q}$-vector spaces \cite[Theorem 6]{Kas:Qana}. The inverse of this map is denoted by $\Glow$. The set $\mathbf{B}^{\mathrm{low}}:=\{\Glow(\tilde{b})\}_{\tilde{b}\in \mathscr{B}(\infty)}$ is an $\mathcal{A}$-basis of $\mathbf{U}_{\mathcal{A}}^-$ and this is called \emph{the canonical basis} of $\mathbf{U}_q^-$. We have $\ast(\Glow(\tilde{b}))=\Glow(\ast \tilde{b})$ for $\tilde{b}\in \mathscr{B}(\infty)$. 
\end{definition}
\begin{definition}\label{crystalsinrepresentation}
Let $\lambda\in P_+$. For $i\in I$, we define the $\mathbb{Q}(q)$-linear maps $\tilde{e}_i$, $\tilde{f}_i\colon V(\lambda)\to V(\lambda)$ by
\begin{align*}
\tilde{e}_i(f_i^{(k)}.u)=f_i^{(k-1)}.u&&&\tilde{f}_i(f_i^{(k)}.u)=f_i^{(k+1)}.u
\end{align*}
for $u\in \Ker (e_i.) \cap V(\lambda)$, where $f_i^{(-1)}.u:=0$. Set
\begin{align*}
\mathscr{L}(\lambda)&:=\sum_{\ell\geq 0, i_1,\dots, i_{\ell}\in I}\mathcal{A}_0\tilde{f}_{i_1}\cdots\tilde{f}_{i_{\ell}}u_{\lambda} \subset V(\lambda),\\
\mathscr{B}(\lambda)&:=\{\tilde{f}_{i_1}\cdots\tilde{f}_{i_{\ell}}u_{\lambda}\ \mathrm{mod}\ q\mathscr{L}(\lambda)\mid \ell\geq 0, i_1,\dots, i_{\ell}\in I\}\setminus \{ 0 \} \subset \mathscr{L}(\lambda)/q\mathscr{L}(\lambda).
\end{align*}
Henceforth write $b_{\lambda}:=u_{\lambda}\ \mathrm{mod}\ q\mathscr{L}(\lambda)\in \mathscr{B}(\lambda)$. Then the pair $(\mathscr{L}(\lambda), \mathscr{B}(\lambda))$ satisfies the following properties \cite[Theorem 2]{Kas:Qana}:
\begin{itemize}
\item[(i)] $\mathscr{L}(\lambda)$ is a free $\mathcal{A}_0$-module and $\mathbb{Q}(q)\otimes_{\mathcal{A}_0}\mathscr{L}(\lambda)\simeq V(\lambda)$,
\item[(ii)] $\mathscr{B}(\lambda)$ is a basis of the $\mathbb{Q}$-vector space $\mathscr{L}(\lambda)/q\mathscr{L}(\lambda)$,
\item[(iii)] $\tilde{e}_i\mathscr{L}(\lambda)\subset \mathscr{L}(\lambda)$ and $\tilde{f}_i\mathscr{L}(\lambda)\subset \mathscr{L}(\lambda)$ for all $i\in I$,
\item[(iv)] $\tilde{e}_i$ and $\tilde{f}_i$ induce $\tilde{e}_i\colon \mathscr{B}(\lambda)\to \mathscr{B}(\lambda)\coprod \{ 0\}$ and $\tilde{f}_i\colon \mathscr{B}(\lambda)\to \mathscr{B}(\lambda)\coprod \{ 0\}$, respectively, for all $i\in I$,
\item[(v)] For $b, b'\in \mathscr{B}(\lambda)$, we have $b'=\tilde{f}_ib$ if and only if $b=\tilde{e}_ib'$.
\end{itemize}
This pair $(\mathscr{L}(\lambda), \mathscr{B}(\lambda))$ is called \emph{the (lower) crystal basis of $V(\lambda)$}. For $i\in I$, define the maps $\varepsilon_i$, $\varphi_i\colon \mathscr{B}(\lambda)\to \mathbb{Z}$ by
\begin{align*}
\varepsilon_i(b)=\max\{k\in \mathbb{Z}_{\geq 0}\mid \tilde{e}_i^k b\neq 0\}&&&\varphi_i(b)=\max\{ k\in \mathbb{Z}_{\geq 0}\mid \tilde{f}_i^k b\neq 0\}=\varepsilon_i(b)+\langle h_i, \weight b\rangle,
\end{align*}
for $b\in \mathscr{B}(\lambda)$. Then the sextuple $(\mathscr{B}(\lambda); \weight, \{\tilde{e}_i\}_{i\in I}, \{\tilde{f}_i\}_{i\in I}, \{ \varepsilon_i\}_{i\in I}, \{\varphi_i\}_{i\in I})$ is a crystal.

Set $V_{\mathcal{A}}(\lambda):=\mathbf{U}_{\mathcal{A}}^-.u_{\lambda}$. Then the canonical map 
\begin{align*}
\mathscr{L}(\lambda)\cap \overline{\mathscr{L}(\lambda)}\cap V_{\mathcal{A}}(\lambda)\to \mathscr{L}(\lambda)/q\mathscr{L}(\lambda)
\end{align*}
is an isomorphism of $\mathbb{Q}$-vector spaces \cite[Theorem 6]{Kas:Qana}. The inverse of this map is denoted by $\Glow_{\lambda}$. The set $\mathbf{B}^{\mathrm{low}}(\lambda):=\{\Glow_{\lambda}(b)\}_{b\in \mathscr{B}(\lambda)}$ is an $\mathcal{A}$-basis of $V_{\mathcal{A}}(\lambda)$ and this is called \emph{the canonical basis of $V(\lambda)$}. For $b\in \mathscr{B}(\lambda)$, write
\begin{align*}
\tilde{e}_i^{\mathrm{max}}b:=\tilde{e}_i^{\varepsilon_i(b)}b&&&\tilde{f}_i^{\mathrm{max}}b:=\tilde{f}_i^{\varphi_i(b)}b.
\end{align*}
\end{definition}
\begin{definition}\label{d:dualcan}
Denote by $\mathbf{B}^{\mathrm{up}}$ (resp.~$\mathbf{B}^{\mathrm{up}}(\lambda)$, $\lambda\in P_+$) the basis of $\mathbf{U}_q^{-}$ (resp.~$V(\lambda)$) dual to $\mathbf{B}^{\mathrm{low}}$
(resp.~$\mathbf{B}^{\mathrm{low}}(\lambda)$) with respect to the bilinear form $(\ ,\ )_{L}$ (resp.~$(\;,\;)_{\lambda}^{\varphi}$), that is, $\mathbf{B}^{\mathrm{up}}=\{\Gup(\tilde{b})\}_{\tilde{b}\in\mathscr{B}(\infty)}$ (resp.~$\mathbf{B}^{\mathrm{up}}(\lambda)=\{\Gup(b)\}_{b\in\mathscr{B}(\lambda)}$) such that 
\begin{align*}
(G(\tilde{b}),\Gup(\tilde{b}'))_{L}=\delta_{\tilde{b},\tilde{b}'}&&&
(\text{resp.~} (\Glow_{\lambda}(b),\Gup_{\lambda}(b'))_{\lambda}^{\varphi}=\delta_{b,b'})
\end{align*}
for any $\tilde{b},\tilde{b}'\in\mathscr{B}(\infty)$ (resp.~$b,b'\in\mathscr{B}(\lambda)$). 
\end{definition}
\begin{example}\label{e:ext}
For $\lambda\in P_+$ and $w\in W$, the vector $u_{w\lambda}$ belongs to $\mathbf{B}^{\mathrm{low}}(\lambda)$ and $\mathbf{B}^{\mathrm{up}}(\lambda)$. 
\end{example}
\begin{proposition}[{\cite[Lemma 5.1.1]{Kasglo}}]\label{p:EFaction}
For $i\in I$, $\lambda\in P_+$ and $b\in \mathscr{B}(\lambda)$, we have
\begin{align*}
e_i^{(\varepsilon_i(b))}.\Gup_{\lambda}(b)&=\Gup_{\lambda}(\tilde{e}_i^{\mathrm{max}}b)& e_i^{(k)}.\Gup_{\lambda}(b)&=0\ \text{if}\ k>\varepsilon_i(b),\\
f_i^{(\varphi_i(b))}.\Gup_{\lambda}(b)&=\Gup_{\lambda}(\tilde{f}_i^{\mathrm{max}}b)& f_i^{(k)}.\Gup_{\lambda}(b)&=0\ \text{if}\ k>\varphi_i(b).
\end{align*}
\end{proposition}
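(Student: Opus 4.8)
The strategy is to reduce everything to the rank-one subalgebra $U_q(\mathfrak{sl}_2)_{(i)}$ generated by $e_i,f_i,t_i^{\pm 1}$, in two steps. The first is a $\varphi$-adjunction: since $\varphi$ is $\mathbb{Q}(q)$-linear with $\varphi(e_i^{(m)})=f_i^{(m)}$, $\varphi(f_i^{(m)})=e_i^{(m)}$, the defining property of $(\;,\;)_{\lambda}^{\varphi}$ gives $(e_i^{(m)}.u,v)_{\lambda}^{\varphi}=(u,f_i^{(m)}.v)_{\lambda}^{\varphi}$, and combined with $(\Glow_{\lambda}(b),\Gup_{\lambda}(b'))_{\lambda}^{\varphi}=\delta_{b,b'}$ this shows that the coefficient of $\Gup_{\lambda}(b')$ in $e_i^{(m)}.\Gup_{\lambda}(b)$ equals the coefficient of $\Glow_{\lambda}(b)$ in $f_i^{(m)}.\Glow_{\lambda}(b')$, and symmetrically with $e$ and $f$ interchanged. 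So it suffices to control the divided powers on the lower canonical basis. The key input is the following \emph{string lemma}: for $b^{\bullet}\in\mathscr{B}(\lambda)$ with $\varepsilon_i(b^{\bullet})=0$ one has $\Glow_{\lambda}(b^{\bullet})\in\Ker(e_i.)$ and $f_i^{(m)}.\Glow_{\lambda}(b^{\bullet})=\Glow_{\lambda}(\tilde{f}_i^{m}b^{\bullet})$ for $0\le m\le\varphi_i(b^{\bullet})$, while $f_i^{(m)}.\Glow_{\lambda}(b^{\bullet})=0$ for $m>\varphi_i(b^{\bullet})$.

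Granting the string lemma, the proposition becomes bookkeeping. For $b\in\mathscr{B}(\lambda)$ set $b^{\bullet}:=\tilde{e}_i^{\mathrm{max}}b$, so $\varepsilon_i(b^{\bullet})=0$, $\langle h_i,\wt b^{\bullet}\rangle=\varphi_i(b^{\bullet})=\varepsilon_i(b)+\varphi_i(b)$ and $b=\tilde{f}_i^{\varepsilon_i(b)}b^{\bullet}$; the string lemma gives $\Glow_{\lambda}(b)=f_i^{(\varepsilon_i(b))}.\Glow_{\lambda}(b^{\bullet})$. Doing the same for an arbitrary $b'$ and using the elementary $\mathfrak{sl}_2$-identities
\[
f_i^{(m)}f_i^{(j)}.w=\left[\begin{array}{c}j+m\\m\end{array}\right]_{i}f_i^{(j+m)}.w,\qquad e_i^{(m)}f_i^{(j)}.w=\left[\begin{array}{c}\langle h_i,\wt w\rangle-j+m\\m\end{array}\right]_{i}f_i^{(j-m)}.w
\]
for $w\in\Ker(e_i.)$ (the latter for $m\le j$, and $0$ otherwise) together with the string lemma applied to $\tilde{e}_i^{\mathrm{max}}b'$, one writes both $f_i^{(m)}.\Glow_{\lambda}(b')$ and $e_i^{(m)}.\Glow_{\lambda}(b')$ as a single Gaussian-binomial multiple of a lower canonical basis vector in the $i$-string of $b'$, or as $0$ when $m$ is out of range. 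Reading off the coefficient of $\Glow_{\lambda}(b)$ forces $\tilde{e}_i^{\mathrm{max}}b'=b^{\bullet}$ and pins down $\varepsilon_i(b')$; one finds that no $b'$ contributes once $m>\varepsilon_i(b)$ (resp.\ $m>\varphi_i(b)$), and that at $m=\varepsilon_i(b)$ (resp.\ $m=\varphi_i(b)$) the unique surviving term carries the Gaussian binomial $\left[\begin{smallmatrix}\varepsilon_i(b)\\\varepsilon_i(b)\end{smallmatrix}\right]_{i}=1$ (resp.\ $\left[\begin{smallmatrix}\varphi_i(b)\\\varphi_i(b)\end{smallmatrix}\right]_{i}=1$) and index $\tilde{e}_i^{\mathrm{max}}b$ (resp.\ $\tilde{f}_i^{\mathrm{max}}b$). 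These are the four displayed formulae.

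For the string lemma, the mechanism is the characterization of $\Glow_{\lambda}$ in Definition~\ref{crystalsinrepresentation}: once $\Glow_{\lambda}(b^{\bullet})\in\Ker(e_i.)$ is known, $f_i^{(m)}$ acts on it as $\tilde{f}_i^{m}$ (this is how $\tilde{f}_i$ is defined on $\Ker(e_i.)$), which preserves $\mathscr{L}(\lambda)$; hence $f_i^{(m)}.\Glow_{\lambda}(b^{\bullet})$ lies in $\mathscr{L}(\lambda)$, is bar-invariant since $\overline{f_i^{(m)}}=f_i^{(m)}$, lies in $V_{\mathcal{A}}(\lambda)$, and reduces to $\tilde{f}_i^{m}b^{\bullet}$ modulo $q\mathscr{L}(\lambda)$, so by uniqueness equals $\Glow_{\lambda}(\tilde{f}_i^{m}b^{\bullet})$ (and vanishes for $m>\langle h_i,\wt b^{\bullet}\rangle$ for $\mathfrak{sl}_2$-reasons). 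The crux is thus $\Glow_{\lambda}(b^{\bullet})\in\Ker(e_i.)$ when $\varepsilon_i(b^{\bullet})=0$: writing $\Glow_{\lambda}(b^{\bullet})=\sum_{j}f_i^{(j)}.u_j$ with $u_j\in\Ker(e_i.)$ along $V(\lambda)=\bigoplus_{j}f_i^{(j)}.(\Ker(e_i.)\cap V(\lambda))$, this means $u_j=0$ for $j\ge1$, and here one uses that the triple $(\mathscr{L}(\lambda),V_{\mathcal{A}}(\lambda),\overline{\phantom{x}})$ is compatible with this $\mathfrak{sl}_2$-decomposition, so that each $u_j$ is itself bar-invariant, integral and in the crystal lattice, with $u_j\in q\mathscr{L}(\lambda)$ for $j\ge1$ (because $b^{\bullet}$ is an $i$-string top), whence the balanced-triple uniqueness forces $u_j=0$. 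This compatibility — essentially the rank-one reduction underlying the construction of crystal and canonical bases, cf.\ \cite{Kas:Qana}, \cite{Lus:intro} — is the one nonformal ingredient, and the main obstacle; everything else is the $\mathfrak{sl}_2$-calculus above. (Alternatively, one may pull the statement back along $\mathbf{U}_q^{-}\twoheadrightarrow V(\lambda)$ from the analogous assertion for $e_i'$, ${}_ie'$ on $\mathbf{B}^{\up}$, cf.\ Proposition~\ref{p:kernel} and \cite{Kasglo}.)
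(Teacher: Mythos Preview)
The paper does not supply a proof of this proposition at all: it is stated with a bare citation to \cite[Lemma~5.1.1]{Kasglo} and used as a black box. So there is no ``paper's own proof'' to compare against; your write-up is strictly more than what the paper provides.

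That said, your argument is sound and is essentially the standard one underlying Kashiwara's lemma. The duality step is correct: by $\varphi$-adjointness the coefficient of $\Gup_{\lambda}(b')$ in $e_i^{(m)}.\Gup_{\lambda}(b)$ equals the coefficient of $\Glow_{\lambda}(b)$ in $f_i^{(m)}.\Glow_{\lambda}(b')$, reducing the question to divided-power actions on the lower basis. Your string lemma is exactly the statement that the lower global basis is compatible with $i$-strings (if $\varepsilon_i(b^{\bullet})=0$ then $\Glow_{\lambda}(b^{\bullet})\in\Ker(e_i.)$ and $f_i^{(m)}.\Glow_{\lambda}(b^{\bullet})=\Glow_{\lambda}(\tilde f_i^{m}b^{\bullet})$), and your deduction of the four displayed formulae from it via the rank-one identities $f_i^{(m)}f_i^{(j)}=\left[\begin{smallmatrix}j+m\\ m\end{smallmatrix}\right]_i f_i^{(j+m)}$ and $e_i^{(m)}f_i^{(j)}.w=\left[\begin{smallmatrix}\langle h_i,\wt w\rangle-j+m\\ m\end{smallmatrix}\right]_i f_i^{(j-m)}.w$ is clean and correct.

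The only place to be careful is exactly where you flag it: in the proof of the string lemma you assert that the balanced triple $(\mathscr{L}(\lambda),\overline{\mathscr{L}(\lambda)},V_{\mathcal{A}}(\lambda))$ is compatible with the decomposition $V(\lambda)=\bigoplus_{j}f_i^{(j)}.(\Ker(e_i.)\cap V(\lambda))$. The compatibility with $\mathscr{L}(\lambda)$ and with $\overline{\phantom{x}}$ is straightforward (the former is part of Kashiwara's construction in \cite{Kas:Qana}, the latter because $\overline{e_i}=e_i$, $\overline{f_i^{(j)}}=f_i^{(j)}$), but the compatibility with the $\mathcal{A}$-form $V_{\mathcal{A}}(\lambda)$ is genuinely nontrivial, since the projectors onto the summands involve denominators over $\mathcal{A}$. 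This is precisely what is established in \cite{Kas:Qana} (and in \cite[Chapter~16--17]{Lus:intro} via a different route), so your citation is appropriate; just be aware that this is not a formality one can fill in ad hoc, and that the content of \cite[Lemma~5.1.1]{Kasglo} is essentially your string lemma itself rather than something strictly upstream of it.
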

\subsection{Unipotent quantum matrix coefficients}
We present the definition and the properties of quantum analogues of matrix coefficients on unipotent groups. The dual canonical basis elements of $\mathbf{U}_q^-$ are described as the quantum matrix coefficients associated with dual canonical basis elements of integrable highest weight modules. 
\begin{definition}\label{d:minor} For $\lambda\in P_{+}$ and $u,u'\in V(\lambda)$, define the element $D_{u,u'}\in\mathbf{U}_q^{-}$ by the following property:
\[
(D_{u,u'},x)_{L}=(u,x.u')_{\lambda}^{\varphi}
\]
for all $x\in\mathbf{U}_q^{-}$. Note that the element $D_{u, u'}$ is uniquely determined by the nondegeneracy of the pairing $(\;,\;)_L$. We call an element of this form \emph{a unipotent quantum matrix coefficient}. Note that $\weight\left(D_{u,u'}\right)=\weight u-\weight u'$ 
for weight vectors $u,u'\in V(\lambda)$. For $w,w'\in W$, write 
\[
D_{w\lambda,w'\lambda}:=D_{u_{w\lambda},u_{w'\lambda}}.
\] 
An element of this form is called \emph{a unipotent quantum
minor}. 
\end{definition}
The following property is nothing but the well-known ``compatibility'' between the canonical basis of $\mathbf{U}_q^-$ and that of $V(\lambda)$. The assertion (1) follows from \cite[Theorem 5]{Kas:Qana}, and the assertion (2) follows from \cite[Proposition 25.2.6]{Lus:intro} and \cite[8.2.2 (iii), (iv)]{Kas:modified}. See also \cite[Proposition 3.46]{KO2}. 
\begin{proposition}\label{p:unipminor}
Let $w\in W$, $\lambda\in P_+$ and $b\in\mathscr{B}(\lambda)$. Then we have the following: 
\begin{itemize}
\item[(1)] the element $D_{\Gup_{\lambda}(b), u_{\lambda}}$ belongs to $\mathbf{B}^{\mathrm{up}}$, 
\item[(2)] the element $D_{u_{w\lambda}, \Gup_{\lambda}(b)}$ belongs to $\mathbf{B}^{\mathrm{up}}$ or equals $0$.  
\end{itemize}
In particular, nonzero unipotent quantum minors are elements of $\mathbf{B}^{\mathrm{up}}$.
\end{proposition}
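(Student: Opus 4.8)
The plan is to reduce both assertions to the defining identity of $D_{u,u'}$ in Definition \ref{d:minor} together with the standard compatibilities among the canonical bases of $\Uq^{-}$, of the $V(\lambda)$, and of the modified quantized enveloping algebra $\cUq$. The key remark is that, by construction, $\mathbf{B}^{\mathrm{up}}$ is the $(\ ,\ )_{L}$-dual of $\mathbf{B}^{\mathrm{low}}=\{\Glow(\tilde b)\}_{\tilde b\in\mathscr{B}(\infty)}$; hence a homogeneous $D\in\Uq^{-}$ lies in $\mathbf{B}^{\mathrm{up}}$ if and only if $(D,\Glow(\tilde b))_{L}=\delta_{\tilde b,\tilde b_{0}}$ for some $\tilde b_{0}\in\mathscr{B}(\infty)$ and all $\tilde b$, while $D=0$ if and only if all these numbers vanish. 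So for each part I would compute the whole family $\{(D,\Glow(\tilde b))_{L}\}_{\tilde b\in\mathscr{B}(\infty)}$ and read off the conclusion.

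For (1), the defining identity gives $(D_{\Gup_{\lambda}(b),u_{\lambda}},\Glow(\tilde b))_{L}=(\Gup_{\lambda}(b),\Glow(\tilde b).u_{\lambda})_{\lambda}^{\varphi}$. Here I would invoke the compatibility of the canonical basis of $\Uq^{-}$ with that of $V(\lambda)$ \cite[Theorem 5]{Kas:Qana}: the map $\Uq^{-}\to V(\lambda)$, $x\mapsto x.u_{\lambda}$, sends $\Glow(\tilde b)$ to $\Glow_{\lambda}(\pi_{\lambda}(\tilde b))$, where $\pi_{\lambda}\colon\mathscr{B}(\infty)\to\mathscr{B}(\lambda)\sqcup\{0\}$ is the induced crystal morphism and $\Glow_{\lambda}(0):=0$, and $\pi_{\lambda}$ restricts to a bijection of $\{\tilde b\mid\pi_{\lambda}(\tilde b)\ne 0\}$ onto $\mathscr{B}(\lambda)$. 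Since $\mathbf{B}^{\mathrm{up}}(\lambda)$ is $(\ ,\ )_{\lambda}^{\varphi}$-dual to $\mathbf{B}^{\mathrm{low}}(\lambda)$, the right-hand side equals $\delta_{b,\pi_{\lambda}(\tilde b)}$, which is $1$ exactly for the unique $\tilde b_{0}$ with $\pi_{\lambda}(\tilde b_{0})=b$ and $0$ otherwise; hence $D_{\Gup_{\lambda}(b),u_{\lambda}}=\Gup(\tilde b_{0})$. This part is essentially immediate once the compatibility theorem is quoted.

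For (2), the same manipulation gives $(D_{u_{w\lambda},\Gup_{\lambda}(b)},\Glow(\tilde b))_{L}=(u_{w\lambda},\Glow(\tilde b).\Gup_{\lambda}(b))_{\lambda}^{\varphi}$, but now the extremal vector occupies the first slot, so $x\mapsto x.u_{\lambda}$ is of no help. I would establish three properties of $D:=D_{u_{w\lambda},\Gup_{\lambda}(b)}$. First, \textbf{bar-invariance}: both $u_{w\lambda}$ (Definition \ref{d:repform}) and $\Gup_{\lambda}(b)$ (the dual of the bar-invariant basis $\mathbf{B}^{\mathrm{low}}(\lambda)$) are bar-invariant, and from the defining identity and the bar-compatibility of $(\ ,\ )_{L}$ and $(\ ,\ )_{\lambda}^{\varphi}$ one gets $\overline{D}=D$. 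Second, \textbf{integrality and lattice membership}: since $\Glow(\tilde b)$ lies in $\mathbf{U}_{\mathcal{A}}^{-}$, which preserves the $\mathcal{A}$-form and the dual crystal lattice of $V(\lambda)$, and since $u_{w\lambda}$, $\Gup_{\lambda}(b)$ lie in the relevant lattices of $V(\lambda)$, the pairing $(D,\Glow(\tilde b))_{L}$ lies in $\mathcal{A}$, and more precisely $D$ lies in the upper crystal lattice of $\Uq^{-}$. Third, \textbf{leading term}: one must identify $D$ modulo $q$ times that lattice as a single dual crystal basis element or $0$; this needs the description modulo $q$ of the $\Uq^{-}$-action on the dual canonical basis of $V(\lambda)$ (in the spirit of Proposition \ref{p:EFaction}) together with the fact (Example \ref{e:ext}) that $u_{w\lambda}$ is simultaneously a lower and an upper global basis vector of $V(\lambda)$, so that pairing against $u_{w\lambda}$ selects at most one such vector. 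This third point is exactly what \cite[Proposition 25.2.6]{Lus:intro} and \cite[8.2.2 (iii), (iv)]{Kas:modified} provide, most transparently inside $\cUq$, whose global basis encodes both the $\mathscr{B}(\infty)$-data and the extremal-weight data, and it is precisely \cite[Proposition 2.67]{KO2}, which I would ultimately cite. Granting these, $D\in\mathbf{B}^{\mathrm{up}}\sqcup\{0\}$; the last assertion follows since, for $w,w'\in W$, Example \ref{e:ext} gives $u_{w'\lambda}=\Gup_{\lambda}(b_{w'\lambda})$ with $b_{w'\lambda}:=u_{w'\lambda}\bmod q\mathscr{L}(\lambda)$, so $D_{w\lambda,w'\lambda}=D_{u_{w\lambda},\Gup_{\lambda}(b_{w'\lambda})}$ lies in $\mathbf{B}^{\mathrm{up}}$ whenever it is nonzero.

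I expect the third point of part (2) to be the real obstacle: it is the only place a genuinely deep input is used — the compatibility of canonical bases with Lusztig's braid symmetries, equivalently the structure of the global basis of $\cUq$ — and care is needed with the $\varepsilon_{i}^{\ast}$-conditions that govern which $\Glow(\tilde b)$ annihilate $u_{\lambda}$ and with the signs produced by the braid operators. Everything else, and the whole of part (1), is formal bookkeeping with the two contravariant forms.
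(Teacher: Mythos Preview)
Your proposal lines up with the paper's treatment: the paper gives no self-contained argument but attributes (1) to \cite[Theorem 5]{Kas:Qana} and (2) to \cite[Proposition 25.2.6]{Lus:intro} together with \cite[8.2.2 (iii), (iv)]{Kas:modified}, referring to \cite[Proposition 2.67]{KO2} for details --- exactly the sources you invoke, and your sketch of (1) is correct.

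There is, however, a genuine slip in your outline of (2). The dual canonical basis elements $\Gup_{\lambda}(b)$ are \emph{not} invariant under the ordinary bar involution $\overline{\phantom{x}}$: already $\Gup(\tilde f_i\tilde b_{\infty})=(1-q_i^{2})f_i$ is not bar-fixed, and neither $(\ ,\ )_{L}$ nor $(\ ,\ )_{\lambda}^{\varphi}$ satisfies $\overline{(x,y)}=(\bar x,\bar y)$, which your ``bar-invariance'' step implicitly assumes. What holds instead is invariance under the \emph{dual} bar involution (the adjoint of $\overline{\phantom{x}}$ with respect to the form), and the balanced triple one must use to characterize $\mathbf{B}^{\up}$ is the upper one (upper crystal lattice, dual integral form, dual bar), not $(\mathscr{L}(\infty),\overline{\mathscr{L}(\infty)},\mathbf{U}_{\mathcal{A}}^{-})$. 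With that correction your three-step strategy --- invariance under the dual bar, membership in the upper lattice and the dual $\mathcal{A}$-form, identification of the leading term modulo $q$ --- does go through, and this is how the cited references (ultimately \cite[Proposition 2.67]{KO2}) organize the argument; but as written your first step is false and would need to be repaired before the sketch stands on its own.
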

\begin{remark}
In \cite{KO2}, we write $D_{\Gup_{\lambda}\left(b\right),u_{\lambda}}=\Gup\left(\overline{\jmath}_{\lambda}(b)\right)$
 and $D_{u_{w\lambda},\Gup_{\lambda}\left(b\right)}=\Gup\left(\ast\overline{\jmath}_{w\lambda}^{\vee}\left(b\right)\right)$
for $b\in\mathscr{B}\left(\lambda\right)$ by using the maps $\overline{\jmath}_{\lambda}\colon \mathscr{B}\left(\lambda\right)\to \mathscr{B}(\infty)$ and $\overline{\jmath}_{w\lambda}^{\vee}\colon \mathscr{B}\left(\lambda\right)\to \mathscr{B}(\infty)\coprod\{0\}$. See also Remark \ref{r:Demazure} below. 
\end{remark}
The following slightly technical proposition is used when we consider the inverse of a quantum twist automorphism below (see \eqref{inverse}). Recall the notation in Definition \ref{d:Weyl}.
\begin{proposition}[{\cite[Corollary 6.4]{Kimura:qunip}, \cite[Theorem 3.48]{KO2}}]\label{p:replace}
Let $\lambda\in P_+$, $w\in W$ and fix $\bm{i}\in I(w)$. Then, for $0\leq k\leq \ell(w)$, there exist $\lambda'\in P_+$ and $b\in \mathscr{B}(\lambda')$ such that $D_{u_{w\lambda'}, \Gup_{\lambda'}(b)}=D_{w_{\leq k}\lambda, \lambda}$.
\end{proposition}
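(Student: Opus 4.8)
The plan is to establish Proposition~\ref{p:replace} by reducing the general unipotent quantum minor $D_{w_{\leq k}\lambda,\lambda}$ to one of the special shape $D_{u_{w\lambda'},\Gup_{\lambda'}(b)}$ through two independent manipulations: first raising the right argument from $u_\lambda$ to an extremal vector $u_{w\lambda'}$, and second expressing the left argument $u_{w_{\leq k}\lambda}$ as a dual canonical basis vector in a possibly larger highest weight module. The natural device is the standard collection of multiplicativity and weight-vector identities for the forms $(\;,\;)_L$ and $(\;,\;)_\lambda^\varphi$ together with Proposition~\ref{p:braid} and Proposition~\ref{p:extremal}, which let one shift weights inside $V(\lambda)$ and compare matrix coefficients across different $\lambda$.

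\textbf{Step 1: from $u_\lambda$ to $u_{w\lambda'}$ on the right.} First I would recall the behaviour of $D_{u,u'}$ under the operation $\lambda\mapsto\lambda+\mu$. For $\lambda,\mu\in P_+$ there is a canonical embedding $V(\lambda+\mu)\hookrightarrow V(\lambda)\otimes V(\mu)$ sending $u_{\lambda+\mu}\mapsto u_\lambda\otimes u_\mu$, and more generally $u_{w(\lambda+\mu)}\mapsto u_{w\lambda}\otimes u_{w\mu}$ by Proposition~\ref{p:extremal} applied factor-wise. Using the compatibility of $(\;,\;)^\varphi$ with tensor products and the multiplicativity of the Lusztig form, one gets an identity of the form $D_{u_{w(\lambda+\mu)},u_{w'(\lambda+\mu)}}\doteq D_{u_{w\lambda},u_{w'\lambda}}\cdot D_{u_{w\mu},u_{w'\mu}}$ up to a power of $q$ (or, more precisely, a relation expressing one in terms of the other when one of the factors is chosen to ``absorb'' the missing extremal data). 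Choosing $\mu$ so that $w_{k\leq}$ acts trivially-enough on $\mu$ — concretely, take $\mu$ with $\langle h_{i_j},s_{i_{j+1}}\cdots s_{i_\ell}\,\mu\rangle=0$ for $j>k$, so that $u_{w\mu}=u_{w_{\leq k}\mu}$ — one arranges that $D_{w_{\leq k}\lambda,\lambda}$ coincides (up to $q$-power, and then exactly after the normalisation built into unipotent quantum minors, which are bar-invariant dual canonical basis elements by Proposition~\ref{p:unipminor}) with $D_{u_{w\lambda'},u'}$ for $\lambda'=\lambda+\mu$ and a suitable right argument $u'$ lying in the extremal-vector direction $u_{w_{\leq k}\lambda'}$.

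\textbf{Step 2: realising the right argument as $\Gup_{\lambda'}(b)$.} The remaining point is that the right argument produced in Step~1 is (a scalar multiple of) an extremal weight vector $u_{v\lambda'}$ for some $v\leq w$ in Bruhat order with $v\lambda'=w_{\leq k}\lambda'$ restricted appropriately, and by Example~\ref{e:ext} any such extremal vector lies in $\mathbf{B}^{\mathrm{up}}(\lambda')$, hence equals $\Gup_{\lambda'}(b)$ for the crystal element $b=u_{v\lambda'}\bmod q\mathscr{L}(\lambda')$. One then checks $v$ can be taken with $w\geq v$ so that Proposition~\ref{p:unipminor}(2) applies and $D_{u_{w\lambda'},\Gup_{\lambda'}(b)}$ is a genuine (nonzero) element of $\mathbf{B}^{\mathrm{up}}$; comparing it with $D_{w_{\leq k}\lambda,\lambda}$ — also in $\mathbf{B}^{\mathrm{up}}$ and of the same weight $w_{\leq k}\lambda-\lambda$ — forces equality once one matches a single coefficient, e.g.\ via $\ast$-invariance of $\mathbf{B}^{\mathrm{up}}$ and the bar-invariance of both sides, since a bar-invariant dual canonical basis element is determined by its image in the crystal lattice.

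\textbf{Main obstacle.} The delicate part is Step~1: getting the \emph{exact} identification (not merely up to a power of $q$) of $D_{w_{\leq k}\lambda,\lambda}$ with $D_{u_{w\lambda'},\Gup_{\lambda'}(b)}$, which requires carefully tracking the $q$-powers coming from the comultiplication $V(\lambda')\hookrightarrow V(\lambda)\otimes V(\mu)$ and the normalisation of the Lusztig pairing under $x\mapsto f_ix$ versus $x\mapsto xf_i$. I expect the cleanest route is to invoke the known factorisation property of unipotent quantum minors under $\lambda\mapsto\lambda+\mu$ already recorded in \cite{Kimura:qunip,KO2} (the very references cited in the statement), so that the present proposition becomes a short corollary: choose $\mu$ annihilating the tail of the reduced word beyond position $k$, note $u_{w\mu}=u_{w_{\leq k}\mu}$ is extremal in $V(\mu)$, and read off that $D_{w_{\leq k}(\lambda+\mu),\lambda+\mu}$ equals $D_{u_{w(\lambda+\mu)},u_{w_{\leq k}(\lambda+\mu)}}$ with the right-hand extremal vector being a dual canonical basis element of $V(\lambda+\mu)$ by Example~\ref{e:ext}. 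Setting $\lambda'=\lambda+\mu$ and $b$ the corresponding crystal element finishes the argument; the only computation to be done with care is confirming that the weights indeed force $D_{w_{\leq k}\lambda,\lambda}$ itself (not just its $(\lambda+\mu)$-analogue) to be the desired minor, which follows from the weight-independence in Proposition~\ref{p:unipminor} combined with Proposition~\ref{p:replace}-type stability of unipotent quantum minors under enlarging $\lambda$.
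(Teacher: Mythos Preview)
The paper does not prove this proposition; it is quoted from \cite[Corollary 6.4]{Kimura:qunip} and \cite[Theorem 2.69]{KO2}. So the comparison must be with the argument in those references, whose shape is quite different from yours: one first observes that $D_{w_{\leq k}\lambda,\lambda}=\Gup(\tilde{b}_0)$ for some $\tilde{b}_0\in\mathscr{B}_w(\infty)$ (Proposition~\ref{p:unipminor}(1) and Remark~\ref{r:Demazure}), and then invokes the surjectivity statement that for $\lambda'$ sufficiently dominant the map $\mathscr{B}_w(\lambda')\to\mathscr{B}_w(\infty)$, $b\mapsto \ast\overline{\jmath}_{w\lambda'}^{\vee}(b)$, hits every element of $\mathscr{B}_w(\infty)$. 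No explicit tensor-product factorisation or extremal-vector manipulation is needed.

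Your proposal has a genuine gap, and it lies exactly where you try to force $b$ to be an extremal crystal element. In Step~2 you write that the right argument ``is (a scalar multiple of) an extremal weight vector $u_{v\lambda'}$'' and appeal to Example~\ref{e:ext}. But matching weights shows $\wt b = w\lambda' - (w_{\leq k}\lambda - \lambda)$, and there is no reason whatsoever for this to be an extremal weight of $V(\lambda')$; in general $b$ is an interior crystal element, not of the form $u_{v\lambda'}$. The whole point of the proposition is that one must allow an \emph{arbitrary} $b\in\mathscr{B}(\lambda')$, not just extremal ones.

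The construction in Step~1 also fails as written. Requiring $\langle h_{i_j}, s_{i_{j+1}}\cdots s_{i_\ell}\mu\rangle=0$ for all $j>k$ forces $\mu$ to be fixed by every $s_{i_j}$ with $j>k$; if the tail $\{i_{k+1},\dots,i_\ell\}$ exhausts $I$ (which happens already for $w$ the longest element and $k$ small), the only such $\mu\in P_+$ is $\mu=0$, and your enlargement $\lambda'=\lambda+\mu$ is vacuous. Moreover, even when such $\mu$ exists, the identity you assert in the ``Main obstacle'' paragraph, that $D_{w_{\leq k}(\lambda+\mu),\lambda+\mu}=D_{u_{w(\lambda+\mu)},u_{w_{\leq k}(\lambda+\mu)}}$, cannot hold in general: the two sides have weights $w_{\leq k}\lambda'-\lambda'$ and $w\lambda'-w_{\leq k}\lambda'$, which differ unless $w\lambda'=w_{\leq k}(2\lambda')-\lambda'$, a condition you have not arranged. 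The circular appeal at the end to ``Proposition~\ref{p:replace}-type stability'' is the proposition you are trying to prove.
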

\subsection{Quantum unipotent cells and quantum twist automorphisms}
A quantum unipotent cell is a quantum analogue of the coordinate algebra of a unipotent cell. The quantum unipotent cells are essentially introduced by De Concini-Procesi \cite{DP:qSch}. We also define quantum twist automorphisms, which are introduced by Kimura and the author \cite{KO2}, on quantum unipotent cells. They are the dramatis personae of the Chamber Ansatz formulae.  
\begin{proposition}[{{\cite[Proposition 3.2.3, 3.2.5]{Kas:Dem}}}]
\label{p:Demazure} For $\lambda\in P_{+}$, $w\in W$ and $\bm{i}=\left(i_{1},\cdots,i_{\ell}\right)\in I\left(w\right)$,
we set 
\[
\mathscr{B}_{w}\left(\lambda\right):=\left\{ \widetilde{f}_{i_{1}}^{a_{1}}\cdots\widetilde{f}_{i_{\ell}}^{a_{\ell}}b_{\lambda}\mid \bm{a}=\left(a_{1},\cdots,a_{\ell}\right)\in\mathbb{Z}_{\geq0}^{\ell}\right\} \setminus\left\{ 0\right\} \subset\mathscr{B}\left(\lambda\right).
\]
Then we have 
\[
V_{w}(\lambda):=\mathbf{U}_q^{+}.u_{w\lambda}=\sum_{b\in\mathscr{B}_{w}\left(\lambda\right)}\mathbb{Q}\left(q\right)\Glow_{\lambda}\left(b\right).
\]
This $\mathbf{U}_q^{\geq 0}$-module $V_w(\lambda)$ is called \emph{a Demazure module}.

\textup{(2)} For $w\in W$ and $\bm{i}=\left(i_{1},\cdots,i_{\ell}\right)\in I\left(w\right)$,
we set 
\[
\mathscr{B}_{w}\left(\infty\right)=\left\{ \widetilde{f}_{i_{1}}^{a_{1}}\cdots\widetilde{f}_{i_{\ell}}^{a_{\ell}}\tilde{b}_{\infty}\mid \bm{a}=\left(a_{1},\cdots,a_{\ell}\right)\in\mathbb{Z}_{\geq0}^{\ell}\right\} 
\]
and $\mathbf{U}_{w}^{-}:=\sum_{a_{1},\dots,a_{\ell}\in\mathbb{Z}_{\geq 0}}\mathbb{Q}\left(q\right)f_{i_{1}}^{a_{1}}\cdots f_{i_{\ell}}^{a_{\ell}}$.
Then we have 
\[
\mathbf{U}_{w}^{-}=\sum_{\tilde{b}\in\mathscr{B}_{w}\left(\infty\right)}\mathbb{Q}\left(q\right)\Glow(\tilde{b}).
\]
\end{proposition}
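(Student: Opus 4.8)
The plan is to prove assertion~(1) by induction on $\ell(w)$, and to obtain~(2) along the same lines. Fix $\bm{i}=(i_{1},\dots,i_{\ell})\in I(w)$, put $i:=i_{1}$ and $w':=s_{i_{2}}\cdots s_{i_{\ell}}$, so that $\bm{i}':=(i_{2},\dots,i_{\ell})\in I(w')$ and $\ell(w)=\ell(w')+1$. For $\ell(w)=0$ both sides of~(1) equal $\mathbb{Q}(q)u_{\lambda}=\mathbb{Q}(q)\Glow_{\lambda}(b_{\lambda})$, so assume $\ell(w)\geq 1$ and that~(1) holds for $w'$, together with the auxiliary assertion---carried along the induction---that $\mathscr{B}_{w'}(\lambda)$ is stable under $\tilde e_{j}$ for every $j\in I$. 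Set $N:=\langle h_{i},w'\lambda\rangle$; since $\ell(s_{i}w')>\ell(w')$ and $\lambda\in P_{+}$ one has $N\geq 0$, and the extremal vector $u_{w'\lambda}$ lies in $\Ker(e_{i}.)$. Hence, by Proposition~\ref{p:extremal} together with $T_{w^{-1}}=T_{(w')^{-1}}T_{i}$, we get $u_{w\lambda}=(T_{w^{-1}})^{-1}(u_{\lambda})=T_{i}^{-1}(u_{w'\lambda})=f_{i}^{(N)}.u_{w'\lambda}$, while $\mathfrak{sl}_{2}$-representation theory gives $u_{w'\lambda}\in\mathbb{Q}(q)^{\times}\,e_{i}^{(N)}.u_{w\lambda}$ and $f_{i}.u_{w\lambda}=0$. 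Writing $\Uq^{\geq 0}$ for the subalgebra generated by the $e_{j}$ ($j\in I$) and the $q^{h}$ ($h\in P^{\ast}$), the relations $e_{j}f_{i}=f_{i}e_{j}$ ($j\neq i$), $q^{h}f_{i}=q^{-\langle h,\alpha_{i}\rangle}f_{i}q^{h}$ and $[e_{i},f_{i}]\in\Uq^{\geq 0}$ yield, by an induction on $k$, the inclusions $\Uq^{\geq 0}f_{i}^{(k)}\subseteq\sum_{0\leq j\leq k}f_{i}^{(j)}\Uq^{\geq 0}$ and $f_{i}^{(k)}\Uq^{\geq 0}\subseteq\sum_{0\leq j\leq k}\Uq^{\geq 0}f_{i}^{(j)}$. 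Combining these with the above relations and with $\Uq^{\geq 0}.V_{w'}(\lambda)=V_{w'}(\lambda)$ gives the Demazure recursion $V_{w}(\lambda)=\sum_{k\geq 0}f_{i}^{(k)}.V_{w'}(\lambda)$ (a finite sum), whereas the form of $\bm{i}$ gives directly $\mathscr{B}_{w}(\lambda)=\bigl(\bigcup_{k\geq 0}\tilde f_{i}^{\,k}\mathscr{B}_{w'}(\lambda)\bigr)\setminus\{0\}$.

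By the inductive hypothesis $V_{w'}(\lambda)=\bigoplus_{b\in\mathscr{B}_{w'}(\lambda)}\mathbb{Q}(q)\Glow_{\lambda}(b)$, so~(1) for $w$ is reduced to the identity
\[
\sum_{k\geq 0}\ \sum_{b\in\mathscr{B}_{w'}(\lambda)}\mathbb{Q}(q)\,f_{i}^{(k)}.\Glow_{\lambda}(b)\ =\ \sum_{b'\in\mathscr{B}_{w}(\lambda)}\mathbb{Q}(q)\,\Glow_{\lambda}(b').
\]
I would prove this one $i$-string at a time. By the auxiliary hypothesis $\mathscr{B}_{w'}(\lambda)$ is $\tilde e_{i}$-stable, hence a disjoint union of lower segments $\{b_{0},\tilde f_{i}b_{0},\dots,\tilde f_{i}^{\,m}b_{0}\}$ of full $i$-strings $\{b_{0},\dots,\tilde f_{i}^{\,n}b_{0}\}$ (with $\varepsilon_{i}(b_{0})=0$, $n=\varphi_{i}(b_{0})$ and $0\leq m\leq n$), while $\mathscr{B}_{w}(\lambda)$ completes each such segment to the whole string. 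The crucial input is then the compatibility of the lower canonical basis of $V(\lambda)$ with the $U_{q}(\mathfrak{sl}_{2})_{i}$-module structure (the subalgebra generated by $e_{i}$, $f_{i}$ and the $q^{h}$): on each $i$-string the divided powers $f_{i}^{(k)}$ act on the family $\{\Glow_{\lambda}(\tilde f_{i}^{\,j}b_{0})\}_{0\leq j\leq n}$ triangularly in $j$, with invertible (Gaussian-binomial) leading coefficients, the off-string corrections landing only in canonical basis vectors supported on $i$-strings that are themselves retained; this is exactly what is needed to pass between the two spans above. Finally the auxiliary assertion for $w$---that $\mathscr{B}_{w}(\lambda)$ be $\tilde e_{j}$-stable for all $j$---is propagated by the same kind of analysis: for $j=i$ it is immediate from $\mathscr{B}_{w}(\lambda)=\bigl(\bigcup_{k}\tilde f_{i}^{\,k}\mathscr{B}_{w'}(\lambda)\bigr)\setminus\{0\}$ and $\tilde e_{i}\tilde f_{i}=\id$, and for $j\neq i$ it follows from the rank-two relations between $\tilde e_{j}$ and the operation of completing $i$-strings.

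Assertion~(2) is proved by the identical induction, with $(\Uq^{-},1,\Glow,\mathscr{B}(\infty),\mathscr{B}_{w}(\infty))$ in place of $(V(\lambda),u_{\lambda},\Glow_{\lambda},\mathscr{B}(\lambda),\mathscr{B}_{w}(\lambda))$; here the analogue of the Demazure recursion, namely $\mathbf{U}_{w}^{-}=\sum_{k\geq 0}f_{i}^{(k)}\mathbf{U}_{w'}^{-}$, is immediate from the definition $\mathbf{U}_{w}^{-}=\sum_{a_{1},\dots,a_{\ell}\geq 0}\mathbb{Q}(q)f_{i_{1}}^{a_{1}}\cdots f_{i_{\ell}}^{a_{\ell}}$, and since $\tilde f_{i}$ never annihilates an element of $\mathscr{B}(\infty)$ no truncation occurs. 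Alternatively,~(2) can be deduced from~(1) by specializing at $\lambda\in P_{+}$ deep in the dominant cone, where the natural map $\Uq^{-}\to V(\lambda)$, $x\mapsto x.u_{\lambda}$, is injective in each bounded range of weights and identifies $\mathbf{B}^{\mathrm{low}}$ with $\mathbf{B}^{\mathrm{low}}(\lambda)$, $\mathscr{B}_{w}(\infty)$ with $\mathscr{B}_{w}(\lambda)$ and $\mathbf{U}_{w}^{-}$ with $V_{w}(\lambda)$ compatibly.

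The genuinely nontrivial ingredient is the $U_{q}(\mathfrak{sl}_{2})_{i}$-compatibility of the canonical basis invoked in the second paragraph---the precise triangularity of the action of the $f_{i}^{(k)}$ along each $i$-string, together with the control of the off-string corrections---which is the technical core of Kashiwara's and Lusztig's construction of crystal/global bases; everything else is bookkeeping along reduced words. A secondary point requiring care is the simultaneous propagation through the induction of the $\tilde e_{j}$-stability of $\mathscr{B}_{w}(\lambda)$ for all $j$, without which the two unions appearing in the crystal step need not be directly comparable.
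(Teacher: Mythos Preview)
The paper does not give its own proof of this proposition: it is stated with the citation \cite[Proposition 3.2.3, 3.2.5]{Kas:Dem} and the reader is referred to Kashiwara's paper for details. So there is no ``paper's proof'' to compare against, only Kashiwara's original argument.

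Your outline is essentially Kashiwara's inductive scheme, and the overall architecture (Demazure recursion $V_{w}(\lambda)=\sum_{k}f_{i}^{(k)}.V_{w'}(\lambda)$, string-by-string comparison via the $U_{q}(\mathfrak{sl}_{2})_{i}$-compatibility of the canonical basis, and simultaneous propagation of $\tilde e_{j}$-stability) is correct. Two points deserve tightening. First, your terminology ``lower segments'' is inverted: $\tilde e_{i}$-stability of $\mathscr{B}_{w'}(\lambda)$ means each $i$-string meets $\mathscr{B}_{w'}(\lambda)$ in an \emph{upper} segment $\{b_{0},\tilde f_{i}b_{0},\dots,\tilde f_{i}^{m}b_{0}\}$ with $\varepsilon_{i}(b_{0})=0$ (your formula is right, only the word is wrong). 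Second, and more substantively, the propagation of $\tilde e_{j}$-stability for $j\neq i$ is not really a matter of ``rank-two relations'' among crystal operators. In Kashiwara's argument the order is: first establish $V_{w}(\lambda)=\bigoplus_{b\in\mathscr{B}_{w}(\lambda)}\mathbb{Q}(q)\Glow_{\lambda}(b)$ using only the $\tilde e_{i}$-stability of $\mathscr{B}_{w'}(\lambda)$; then $\tilde e_{j}$-stability of $\mathscr{B}_{w}(\lambda)$ for \emph{all} $j$ is read off from the obvious inclusion $e_{j}.V_{w}(\lambda)\subset V_{w}(\lambda)$ combined with the same $U_{q}(\mathfrak{sl}_{2})_{j}$-compatibility of $\Glow_{\lambda}$ (specifically, that $e_{j}.\Glow_{\lambda}(b)$ has leading term a nonzero multiple of $\Glow_{\lambda}(\tilde e_{j}b)$ modulo terms of strictly smaller $\varepsilon_{j}$). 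So the auxiliary hypothesis is a \emph{consequence} of the main statement at each stage rather than something requiring a separate combinatorial argument; phrasing it your way is harmless but obscures why it holds. You are right that the genuine content sits in the $U_{q}(\mathfrak{sl}_{2})$-triangularity of the canonical basis, which you do not attempt to reprove.
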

For more details on Demazure modules and their crystal bases, see Kashiwara
\cite{Kas:Dem}. 
\begin{remark}\label{r:Demazure}
Recall Proposition \ref{p:unipminor}. If $b\in \mathscr{B}_{w}\left(\lambda\right)$, then $D_{\Gup_{\lambda}(b), u_{\lambda}}=\Gup(\tilde{b})$ for some $\tilde{b}\in \mathscr{B}_w(\infty)$. The element $D_{u_{w\lambda}, \Gup_{\lambda}(b)}$ is equal to $0$ if and only if $b\notin \mathscr{B}_w(\lambda)$.  
\end{remark}
\begin{definition}
\label{d:qclosed} Let $w\in W$. Set 
\[
\left(\mathbf{U}_{w}^{-}\right)^{\perp}:=\{x\in\mathbf{U}_q^{-}\mid(x,\mathbf{U}_{w}^{-})_{L}=0\}.
\]
Then, by the property of the pairing $(\;,\;)_L$, $\left(\mathbf{U}_{w}^{-}\right)^{\perp}$ is a two-sided ideal of $\mathbf{U}_q^{-}$. Hence we obtain a $\mathbb{Q}(q)$-algebra $\mathbf{U}_q^{-}/\left(\mathbf{U}_{w}^{-}\right)^{\perp}$, which is denoted by $\mathbf{A}_q[N_{-}\cap X_w]$ and called \emph{a quantum closed unipotent cell}. See \cite{KO2} for the meaning of the notation. 

The quantum closed unipotent cell has a $Q_{-}$-graded algebra structure induced from that of $\mathbf{U}_q^{-}$. Describe the canonical projection $\mathbf{U}_q^{-}\to\mathbf{A}_q[N_{-}\cap X_w]$ as $x\mapsto[x]$. The element $[x]$ clearly depends on $w$, however, we omit to write $w$ because it will cause no confusion below. By Proposition \ref{p:Demazure}, we have 
\[
\left(\mathbf{U}_{w}^{-}\right)^{\perp}=\sum_{\tilde{b}\in\mathscr{B}\left(\infty\right)\setminus\mathscr{B}_{w}\left(\infty\right)}\mathbb{Q}\left(q\right)\Gup(\tilde{b}).
\]
Hence $\mathbf{A}_q[N_{-}\cap X_w]$ has the dual canonical basis $\{[\Gup(\tilde{b})]\mid \tilde{b}\in \mathscr{B}_w(\infty)\}$. 
\end{definition}
The following multiplicative property and $q$-central property of unipotent quantum minors are well-known. We should note that the explicit powers of $q$ in the following formulae depend on the definitions of unipotent quantum minors delicately (cf.~\cite{Yak:inv, GeY}). A slightly detailed treatment in the same convention as ours can be found in \cite{KO2}. 
\begin{proposition}[{\cite[subsection 3.8]{Yak:inv}, \cite[section 6]{Kimura:qunip}}]
\label{p:comm} Let $w\in W$ and set $\mathcal{D}_{w}:=\{q^{m}D_{w\lambda,\lambda}\mid m\in \mathbb{Z}, \lambda\in P_{+}\}$.
Then the set $[\mathcal{D}_{w}]$ is an Ore set of $\mathbf{A}_q[N_{-}\cap X_w]$ consisting of $q$-central elements. More explicitly, for $\lambda,\lambda'\in P_{+}$ and a homogeneous element $[x]\in\mathbf{A}_q[N_{-}\cap X_w]$, we have 
\begin{itemize}
\item[(1)] $q^{-(\lambda,w\lambda'-\lambda')}D_{w\lambda,\lambda}D_{w\lambda',\lambda'} =D_{w(\lambda+\lambda'),\lambda+\lambda'}$ in $\mathbf{U}_q^-$, 
\item[(2)] $[D_{w\lambda,\lambda}][x]  =q^{(\lambda+w\lambda,\weight x)}[x][D_{w\lambda,\lambda}]$ in $\mathbf{A}_q[N_{-}\cap X_w]$.
\end{itemize}
\end{proposition}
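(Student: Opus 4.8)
The plan is to establish the explicit formulae (1) and (2); the Ore property then follows formally.

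\emph{Formula (1).} I would deduce this from the tensor product embedding of highest weight modules. For $\lambda,\lambda'\in P_{+}$ let $\iota\colon V(\lambda+\lambda')\hookrightarrow V(\lambda)\otimes V(\lambda')$ be the unique (injective) $\Uq$-module map with $\iota(u_{\lambda+\lambda'})=u_{\lambda}\otimes u_{\lambda'}$. Using $u_{w\mu}=(T_{w^{-1}})^{-1}(u_{\mu})$ (Proposition \ref{p:extremal}) and the explicit divided-power formula for $u_{w\mu}$ in Definition \ref{d:repform}, whose exponents are additive in $\mu$, one checks $\iota(u_{w(\lambda+\lambda')})=u_{w\lambda}\otimes u_{w\lambda'}$: in the step-by-step computation only one term survives at each stage because the intermediate vectors are extremal, and the overall power of $q$ is trivial since $\iota$ preserves the contravariant forms. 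As the contravariant form on $V(\lambda)\otimes V(\lambda')$ pulls back along $\iota$ to $(\;,\;)^{\varphi}_{\lambda+\lambda'}$, for any $z\in\Uq^{-}$ one may write
\[ (D_{w(\lambda+\lambda'),\lambda+\lambda'},z)_{L}=\bigl(u_{w\lambda}\otimes u_{w\lambda'},\ \Delta(z).(u_{\lambda}\otimes u_{\lambda'})\bigr)^{\varphi}, \]
expand $\Delta(z)$, and compare with the expansion of $(D_{w\lambda,\lambda}D_{w\lambda',\lambda'},z)_{L}$ coming from the adjointness of multiplication in $\Uq^{-}$ with Lusztig's comultiplication. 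Every discrepancy is a power of $q$ produced by the Cartan part of $\Delta(f_{i})$ acting on the weight vectors $u_{w\lambda}$, $u_{w\lambda'}$; bookkeeping these (and using the $W$-invariance of $(\;,\;)$) gives exactly $q^{-(\lambda,w\lambda'-\lambda')}$.

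\emph{Formula (2).} By (1) it suffices to treat the case $\lambda=\varpi_{i}$: if $\lambda=\sum_{i}m_{i}\varpi_{i}$ then $[D_{w\lambda,\lambda}]$ is, up to a power of $q$, a product of the $[D_{w\varpi_{i},\varpi_{i}}]$, and since $\lambda+w\lambda=\sum_{i}m_{i}(\varpi_{i}+w\varpi_{i})$ the $q$-commutation for $\lambda$ follows from that for the $\varpi_{i}$. For $[D_{w\varpi_{i},\varpi_{i}}]$, by Definition \ref{d:qclosed} and Proposition \ref{p:Demazure} it is enough to show, for every homogeneous $x\in\Uq^{-}$, that
\[ D_{w\varpi_{i},\varpi_{i}}\,x-q^{(\varpi_{i}+w\varpi_{i},\,\wt x)}\,x\,D_{w\varpi_{i},\varpi_{i}}\in(\mathbf{U}_{w}^{-})^{\perp}, \]
i.e.\ that this element pairs to $0$ under $(\;,\;)_{L}$ with each monomial $f_{i_{1}}^{a_{1}}\cdots f_{i_{\ell}}^{a_{\ell}}$ spanning $\mathbf{U}_{w}^{-}$. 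The inputs are: $(D_{w\varpi_{i},\varpi_{i}},y)_{L}=(u_{w\varpi_{i}},y.u_{\varpi_{i}})^{\varphi}$ (Definition \ref{d:minor}), a weight-orthogonal form with $(u_{w\varpi_{i}},u_{w\varpi_{i}})^{\varphi}=1$; the adjointness of left/right multiplication by $f_{j}$ with the skew-derivations $e'_{j}$, ${}_{j}e'$ (Definition \ref{d:Lusform}); and the vanishings forced by $f_{j}.u_{\varpi_{i}}=0$ for $j\neq i$ and by the extremality of $u_{w\varpi_{i}}$ (for instance ${}_{j}e'(D_{w\varpi_{i},\varpi_{i}})=0$ for $j\neq i$). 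Peeling one generator off the monomial and using the Leibniz rules for $e'_{j}$, ${}_{j}e'$ lowers the weight of $x$, so one closes by induction on $\wt x$, the case $\wt x=0$ being trivial. Alternatively one can induct on $\ell(w)$, using for $w=s_{i_{1}}w'$ the recursive structure of $\mathbf{U}_{w}^{-}$ and of $D_{w\varpi_{i},\varpi_{i}}$ with respect to $T_{i_{1}}$ (Proposition \ref{p:braid}), with base case $w=s_{i}$ where $\Aq[N_{-}\cap X_{s_{i}}]$ is a polynomial algebra. \emph{Either way, the real work is the precise tracking of the powers of $q$ and the control of the error terms arising when $e'_{j}$ or ${}_{j}e'$ acts on $D_{w\varpi_{i},\varpi_{i}}$ itself (these are again matrix coefficients $D_{e_{j}.u_{w\varpi_{i}},u_{\varpi_{i}}}$), and I expect this to be the main obstacle.}

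\emph{Conclusion.} Granting (1) and (2): by (1), $\mathcal{D}_{w}$ — hence $[\mathcal{D}_{w}]$ — is multiplicatively closed and contains $1$ (the case $\lambda=0$); by (2), every element of $[\mathcal{D}_{w}]$ is $q$-central, and decomposing an arbitrary element of $\Aq[N_{-}\cap X_w]$ into homogeneous pieces and commuting each past $[D_{w\lambda,\lambda}]$ shows $[\mathcal{D}_{w}]\,\Aq[N_{-}\cap X_w]=\Aq[N_{-}\cap X_w]\,[\mathcal{D}_{w}]$. Since $\Aq[N_{-}\cap X_w]$ is a domain (e.g.\ by the Levendorskii--Soibelman straightening of a PBW basis, or by its embedding into a quantum torus), each nonzero $[D_{w\lambda,\lambda}]$ — nonzero since it is a dual canonical basis element (Proposition \ref{p:unipminor}) — is a regular element. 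A multiplicatively closed set of $q$-central regular elements is automatically a left and right Ore set: for $s\in[\mathcal{D}_{w}]$ and $[x]\in\Aq[N_{-}\cap X_w]$ the choice $s'=s''=s$ works in the Ore conditions, as $s[x]\in\Aq[N_{-}\cap X_w]\,s$ and $[x]s\in s\,\Aq[N_{-}\cap X_w]$. This proves the proposition.
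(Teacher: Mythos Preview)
The paper does not prove this proposition; it is quoted from \cite[Proposition 6.11]{Kimura:qunip} and used as a black box. So there is no ``paper's own proof'' to compare against, and the question is only whether your sketch stands on its own.

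Your treatment of (1) via the tensor embedding $V(\lambda+\lambda')\hookrightarrow V(\lambda)\otimes V(\lambda')$ and the conclusion deducing the Ore property from (1) and (2) are both fine and standard. The gap is in (2): you reduce to $\lambda=\varpi_i$ correctly, but then offer two alternative inductions (on $\wt x$ or on $\ell(w)$) without carrying either one out, and you explicitly flag the bookkeeping of $q$-powers and of the error terms $D_{e_j.u_{w\varpi_i},u_{\varpi_i}}$ as ``the main obstacle''. That is an honest assessment, but it means the argument for (2) is not a proof --- it is a plan with the hard step missing. In particular, the terms $e'_j(D_{w\varpi_i,\varpi_i})=(1-q_j^2)D_{e_j.u_{w\varpi_i},u_{\varpi_i}}$ are genuinely nonzero for many $j$ (whenever $w^{-1}\alpha_j<0$), so the induction does not close trivially and one really must show these extra terms land in $(\mathbf{U}_w^-)^{\perp}$, which requires a separate argument you have not supplied.

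A cleaner route, and closer to how the literature handles it, is to bypass the skew-derivation induction entirely: use the injective Feigin map $\Phi_{\bm{i}}\colon \Aq[N_-\cap X_w]\hookrightarrow \mathcal{P}_{\bm{i}}$ (Proposition~\ref{p:Feigin}(2)) together with the fact that $\Phi_{\bm{i}}(D_{w\lambda,\lambda})$ is an explicit monomial (Proposition~\ref{p:Feigin}(3)). In a quantum affine space any monomial $q$-commutes with everything, and the exponent can be read off directly; injectivity then transports the relation back to $\Aq[N_-\cap X_w]$. Alternatively, Kimura's original argument works with the PBW/Levendorskii--Soibelman presentation of $\Uq^-(w)$, where $D_{w\lambda,\lambda}$ is (up to a scalar) a product of PBW root vectors and the $q$-commutation is a direct computation in that basis. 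Either of these avoids the inductive bookkeeping you anticipate as problematic.
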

\begin{definition}\label{d:localization} By Proposition \ref{p:comm}, we can consider
the following localization: 
\begin{align*}
\mathbf{A}_q[N_{-}^{w}] :=\mathbf{A}_q[N_{-}\cap X_w][[\mathcal{D}_{w}]^{-1}].
\end{align*}
This algebra $\mathbf{A}_q[N_{-}^{w}]$ is called \emph{a quantum unipotent cell}. A quantum unipotent cell has a $Q$-graded algebra structure in an obvious way. 
\end{definition}
The following map $\eta_{w,q}$ is called \emph{a quantum twist automorphism}. It is a quantum analogue of the (dual of) the twist automorphism $\eta_w^{\ast}\colon \mathbb{C}[N_-^w]\to \mathbb{C}[N_-^w]$, introduced by Berenstein, Fomin and Zelevinsky \cite{BFZ, BZ:Schu} (see Section \ref{s:introduction}). See \cite{KO2} for the precise argument of specialization at $q=1$.
\begin{proposition}[{\cite[Theorem 6.1]{KO2}}]\label{p:BZauto} Let $w\in W$. Then there exists a $\mathbb{Q}(q)$-algebra automorphism $\eta_{w,q}\colon\mathbf{A}_q[N_{-}^{w}]\to\mathbf{A}_q[N_{-}^{w}]$ given by 
\begin{align*}
[D_{u,u_{\lambda}}] & \mapsto q^{-(\lambda,\weight u-\lambda)}[D_{w\lambda,\lambda}]^{-1}[D_{u_{w\lambda},u}],&
[D_{w\lambda,\lambda}]^{-1} & \mapsto q^{(\lambda,w\lambda-\lambda)}[D_{w\lambda,\lambda}]
\end{align*}
for a weight vector $u\in V(\lambda)$ and $\lambda\in P_{+}$. In particular, $\weight\eta_{w,q}([x])=-\weight[x]$ for a homogeneous element $[x]\in\mathbf{A}_q[N_{-}^{w}]$.
\end{proposition}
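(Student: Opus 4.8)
The plan is to define $\eta_{w,q}$ on a set of algebra generators of $\Aq[N_{-}^{w}]$, to check that the assignment is compatible with every relation, and finally to produce an inverse of the same shape. As a preliminary reduction, note that by Proposition~\ref{p:unipminor}(1) and Remark~\ref{r:Demazure} every dual canonical basis element $[\Gup(\tilde b)]$ with $\tilde b\in\mathscr{B}_{w}(\infty)$ equals $[D_{\Gup_{\lambda}(b),u_{\lambda}}]$ for suitable $\lambda\in P_{+}$ and $b\in\mathscr{B}_{w}(\lambda)$; hence the unipotent quantum matrix coefficients $[D_{u,u_{\lambda}}]$ span $\Aq[N_{-}\cap X_{w}]$, and together with the inverses $[D_{w\lambda,\lambda}]^{-1}$ provided by Proposition~\ref{p:comm} and Definition~\ref{d:localization} they generate $\Aq[N_{-}^{w}]$ as a $\mathbb{Q}(q)$-algebra. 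It therefore suffices to construct a well-defined $\mathbb{Q}(q)$-algebra homomorphism $\Aq[N_{-}\cap X_{w}]\to\Aq[N_{-}^{w}]$ given on basis elements by the first formula, to note that it carries the Ore set $[\mathcal{D}_{w}]$ into units — so that it extends to $\Aq[N_{-}^{w}]$ by the universal property of Ore localization, necessarily via the second formula — and to verify that the resulting endomorphism is bijective.

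First I would check that each right-hand side lies in $\Aq[N_{-}^{w}]$: $[D_{u_{w\lambda},u}]$ is the image of a unipotent quantum matrix coefficient (Definition~\ref{d:minor}), $[D_{w\lambda,\lambda}]^{-1}$ is available in the localization, and since $[D_{w\lambda,\lambda}]$ is $q$-central (Proposition~\ref{p:comm}(2)) the order of the two factors is immaterial up to a power of $q$, fixed by the prefactor $q^{-(\lambda,\wt u-\lambda)}$. Taking weights gives $\wt\eta_{w,q}([D_{u,u_{\lambda}}])=(w\lambda-\wt u)-(w\lambda-\lambda)=-(\wt u-\lambda)=-\wt[D_{u,u_{\lambda}}]$, which is the last assertion of the proposition. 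There remain three points: (a) the value on $[\Gup(\tilde b)]$ is independent of the presentation $[\Gup(\tilde b)]=[D_{\Gup_{\lambda}(b),u_{\lambda}}]$, so that a linear map is well defined; (b) this linear map is multiplicative; and (c) it is compatible with the vanishing relations of $\Aq[N_{-}\cap X_{w}]$. Point (c) is immediate, since for $u=\Gup_{\lambda}(b)$ Remark~\ref{r:Demazure} shows that $[D_{\Gup_{\lambda}(b),u_{\lambda}}]$ and $[D_{u_{w\lambda},\Gup_{\lambda}(b)}]$ both vanish precisely when $b\notin\mathscr{B}_{w}(\lambda)$.

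Points (a) and (b) rest on the multiplication rule for unipotent quantum matrix coefficients coming from the coproduct of $\Uq$: in $\Uq^{-}$ the product $D_{u,u_{\lambda}}D_{v,u_{\mu}}$ (for weight vectors $u\in V(\lambda)$, $v\in V(\mu)$) equals, up to an explicit power of $q$, the matrix coefficient of $u\otimes v$ against $u_{\lambda}\otimes u_{\mu}$ in $V(\lambda)\otimes V(\mu)$, projected onto the Cartan summand $V(\lambda+\mu)$; the same holds with $D_{u_{w\lambda},u}$ in place of $D_{u,u_{\lambda}}$. For (a) one passes between two presentations of $[\Gup(\tilde b)]$ by saturating the dominant weight and applies Proposition~\ref{p:comm}(1). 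For (b) one combines the rule on both sides of $\eta_{w,q}$ with the identity $u_{w\lambda}=(T_{w^{-1}})^{-1}(u_{\lambda})$ of Proposition~\ref{p:extremal} and with the compatibility of the braid operators $T_{w}$ with tensor products (Proposition~\ref{p:braid}), whereupon the multiplicativity reduces to a bookkeeping of powers of $q$. This last point is the main obstacle: forcing every power of $q$ to cancel exactly is what pins down the scalars $q^{-(\lambda,\wt u-\lambda)}$ and $q^{(\lambda,w\lambda-\lambda)}$, and it demands precise control of how $T_{w}$ interacts with the coproduct and with the bilinear forms $(\ ,\ )_{L}$ and $(\;,\;)_{\lambda}^{\varphi}$.

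Finally, for bijectivity I would write down a candidate inverse by the companion rule $[D_{u_{w\lambda},u}]\mapsto q^{\bullet}[D_{w\lambda,\lambda}]^{-1}[D_{u,u_{\lambda}}]$ and $[D_{w\lambda,\lambda}]^{-1}\mapsto q^{(\lambda,w\lambda-\lambda)}[D_{w\lambda,\lambda}]$, the exponent $\bullet$ being forced by the weight grading. For this rule to make sense every dual canonical basis element of $\Aq[N_{-}\cap X_{w}]$ — in particular each unipotent quantum minor $D_{w_{\leq k}\lambda,\lambda}$ — must be expressible as some $[D_{u_{w\lambda'},\Gup_{\lambda'}(b)}]$, which is exactly the content of Proposition~\ref{p:replace}. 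One then verifies, again on generators, that the two composites are the identity, the powers of $q$ cancelling by the centrality of the quantum minors; this completes the construction.
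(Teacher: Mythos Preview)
The paper does not prove this proposition; it is quoted from \cite[Theorem~5.4]{KO2} and used as a black box, so there is no argument here to compare your proposal against.

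Judged on its own, your outline has the right shape but leaves the substantive steps unfinished. Two places in particular are not supported by what this paper makes available. First, you appeal to ``the compatibility of the braid operators $T_{w}$ with tensor products (Proposition~\ref{p:braid})'', but Proposition~\ref{p:braid} says nothing about tensor products; the relation between $T_{w}$ on $V(\lambda)\otimes V(\mu)$ and $T_{w}\otimes T_{w}$ involves the quasi-$R$-matrix and is a nontrivial separate input. Second, the multiplication rule for unipotent quantum matrix coefficients that you invoke for both (a) and (b) is asserted rather than derived, and you yourself flag that forcing the $q$-powers to cancel is ``the main obstacle'' without carrying it out. These are precisely the parts of the argument that require real work in \cite{KO2} (where the map is in fact built via the quantum coordinate ring $\Aq[G]$ and an antipode-type anti-automorphism, not directly from the tensor-product heuristic you sketch). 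What you have written is a plausible roadmap, but it stops short of the actual proof.
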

It is easy to show that the inverse of the quantum twist automorphism is given by  
\begin{align}
\eta_{w,q}^{-1}([D_{u_{w\lambda}, u}])=q^{(\lambda, \weight u-w\lambda)}[D_{w\lambda, \lambda}]^{-1}[D_{u, u_{\lambda}}]\label{inverse}
\end{align}
for a weight vector $u\in V(\lambda)$ and $\lambda\in P_+$.
\section{Quantum Chamber Ansatz}\label{s:CA}
In this section, we prove quantum analogues of the Chamber Ansatz formulae for unipotent cells (Corollary \ref{c:CA}) by using the quantum twist automorphisms. A quantum analogue of the homomorphism $y_{\bm{i}}^{\ast}$ (see \eqref{clFeigin}) is known as the Feigin homomorphism. By the Feigin homomorphisms, we can realize quantum unipotent cells in quantum tori. Quantum Chamber Ansatz formulae provide explicit description of the variables of quantum tori in terms of elements of quantum unipotent cells. 
\begin{definition}\label{d:Feigin}
Let $\bm{i}=(i_1,\dots, i_{\ell})\in I^{\ell}$. \emph{The quantum affine space} (resp.~\emph{the quantum torus}) $\mathcal{P}_{\bm{i}}$ (resp.~$\mathcal{L}_{\bm{i}}$) is the unital associative $\mathbb{Q}(q)$-algebra generated by $t_1,\dots, t_{\ell}$ (resp.~$t_1^{\pm 1},\dots, t_{\ell}^{\pm 1}$) subject to the relations;
\begin{align*}
&t_jt_k=q^{(\alpha_{i_j}, \alpha_{i_k})}t_kt_j\;\text{for}\;1\leq j<k\leq \ell,\\
&t_kt_k^{-1}=t_k^{-1}t_k=1\;\text{for}\;1\leq k\leq \ell.
\end{align*} 
Define the $\mathbb{Q}(q)$-linear map $\Phi_{\bm{i}}\colon \mathbf{U}_q^-\to \mathcal{P}_{\bm{i}}$ by 
\[
x\mapsto  \sum_{\bm{a}=(a_1,\dots, a_{\ell})\in \mathbb{Z}_{\geq 0}^{\ell}}q_{\bm{i}}(\bm{a})(x, f_{i_1}^{(a_1)}\cdots f_{i_{\ell}}^{(a_{\ell})})_{L}t_1^{a_1}\cdots t_{\ell}^{a_{\ell}}
\]
where
\[
q_{\bm{i}}(\bm{a}):=\prod_{k=1}^{\ell}q_{i_k}^{a_k(a_k-1)/2}.
\]
Note that the all but finitely many summands in the right-hand side are zero. The map $\Phi_{\bm{i}}$ is called \emph{a Feigin homomorphism}.
\end{definition}
\begin{proposition}[{\cite[Theorem 2.1, Theorem 3.1]{Berenstein}}]\label{p:Feigin}
\textup{(1)} For $\bm{i}\in I^{\ell}$, the map $\Phi_{\bm{i}}$ is a $\mathbb{Q}(q)$-algebra homomorphism.

\textup{(2)} For $w\in W$ and $\bm{i}\in I(w)$, we have $\Ker \Phi_{\bm{i}}=\left(\mathbf{U}_{w}^{-}\right)^{\perp}$.

\textup{(3)} For $w\in W$, $\bm{i}=(i_1,\dots, i_{\ell})\in I(w)$ and $\lambda\in P_+$, we have 
\[
\Phi_{\bm{i}}\left(D_{w\lambda, \lambda}\right)=q_{\bm{i}}(\bm{a})t_1^{a_1}\cdots t_{\ell}^{a_{\ell}}
\]
where $\bm{a}=(a_1,\dots, a_{\ell})$ with $a_k:=\langle w_{\leq k}h_{i_{k}}, w\lambda\rangle$. Recall the notation in Definition \ref{d:Weyl}.
\end{proposition}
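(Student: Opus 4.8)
The plan is to establish the three assertions in the order (2), (1), (3): assertion (2) is essentially formal, (1) is a computation with the twisted bialgebra structure of $\Uq^{-}$, and (3) carries the genuine representation-theoretic input. For (2), note first that the ordered monomials $t_{1}^{a_{1}}\cdots t_{\ell}^{a_{\ell}}$ ($\bm{a}=(a_{1},\dots,a_{\ell})\in\mathbb{Z}_{\geq0}^{\ell}$) form a $\mathbb{Q}(q)$-basis of the quantum affine space $\mathcal{P}_{\bm{i}}$, since $\mathcal{P}_{\bm{i}}$ is an iterated skew-polynomial ring. Hence $\Phi_{\bm{i}}(x)=0$ is equivalent to $(x,f_{i_{1}}^{(a_{1})}\cdots f_{i_{\ell}}^{(a_{\ell})})_{L}=0$ for all $\bm{a}$; as $f_{i}^{(a)}$ is a nonzero scalar multiple of $f_{i}^{a}$, this is in turn equivalent to $(x,f_{i_{1}}^{a_{1}}\cdots f_{i_{\ell}}^{a_{\ell}})_{L}=0$ for all $\bm{a}$, i.e.\ to $x\in(\mathbf{U}_{w}^{-})^{\perp}$ by the description $\mathbf{U}_{w}^{-}=\sum_{\bm{a}}\mathbb{Q}(q)f_{i_{1}}^{a_{1}}\cdots f_{i_{\ell}}^{a_{\ell}}$ of Proposition~\ref{p:Demazure}(2) and the nondegeneracy of $(\ ,\ )_{L}$.

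For (1), I would use that $\Uq^{-}$ carries a comultiplication $r\colon\Uq^{-}\to\Uq^{-}\otimes\Uq^{-}$ which is an algebra homomorphism for the $q$-twisted multiplication on the target and which is adjoint to multiplication for the form (so that $(xy,z)_{L}$ is computed from $r(z)$, paired factorwise against $x\otimes y$), together with the divided-power expansion $r(f_{i}^{(n)})=\sum_{b+c=n}q_{i}^{-bc}f_{i}^{(b)}\otimes f_{i}^{(c)}$. Iterating $r$, the map $\Phi_{\bm{i}}$ factors as the $\ell$-fold coproduct $\Uq^{-}\to(\Uq^{-})^{\otimes\ell}$ followed by the tensor product $\Phi_{(i_{1})}\otimes\cdots\otimes\Phi_{(i_{\ell})}$ of the rank-one Feigin maps $\Phi_{(i_{k})}\colon\Uq^{-}\to\mathbb{Q}(q)[t_{k}]$, the target being the $q$-twisted tensor product $\mathbb{Q}(q)[t_{1}]\otimes\cdots\otimes\mathbb{Q}(q)[t_{\ell}]$, which is exactly $\mathcal{P}_{\bm{i}}$. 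Each of these constituents is an algebra homomorphism — the rank-one case reducing to a scalar identity involving powers of a single $f_{i}$ — so $\Phi_{\bm{i}}$ is too, and $\Phi_{\bm{i}}(1)=1$ is clear from weights. In identifying the composite with $\Phi_{\bm{i}}$ one must match the powers of $q$ produced by $r$ (from the divided-power expansion and from straightening the twisted tensor product) against the relations $t_{j}t_{k}=q^{(\alpha_{i_{j}},\alpha_{i_{k}})}t_{k}t_{j}$ of $\mathcal{P}_{\bm{i}}$; this is precisely the bookkeeping for which the normalization $q_{\bm{i}}(\bm{a})=\prod_{k}q_{i_{k}}^{a_{k}(a_{k}-1)/2}$, the product of the rank-one normalizations, is designed.

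For (3), a short manipulation using $w=s_{i_{1}}\cdots s_{i_{\ell}}$ and the $W$-invariance of $\langle\ ,\ \rangle$ rewrites $a_{k}=\langle w_{\leq k}h_{i_{k}},w\lambda\rangle$ as $\langle h_{i_{k}},s_{i_{k+1}}\cdots s_{i_{\ell}}\lambda\rangle$, so Definition~\ref{d:repform} gives $u_{w\lambda}=f_{i_{1}}^{(a_{1})}\cdots f_{i_{\ell}}^{(a_{\ell})}.u_{\lambda}$. By Definition~\ref{d:minor}, the coefficient of $t_{1}^{b_{1}}\cdots t_{\ell}^{b_{\ell}}$ in $\Phi_{\bm{i}}(D_{w\lambda,\lambda})$ is $q_{\bm{i}}(\bm{b})\,(u_{w\lambda},f_{i_{1}}^{(b_{1})}\cdots f_{i_{\ell}}^{(b_{\ell})}.u_{\lambda})_{\lambda}^{\varphi}$, so the assertion reduces to the orthogonality relation
\[
(u_{w\lambda},f_{i_{1}}^{(b_{1})}\cdots f_{i_{\ell}}^{(b_{\ell})}.u_{\lambda})_{\lambda}^{\varphi}=\delta_{\bm{b},\bm{a}},
\]
the value at $\bm{b}=\bm{a}$ being $(u_{w\lambda},u_{w\lambda})_{\lambda}^{\varphi}=1$. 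I would prove this by induction on $\ell$, the case $\ell=0$ being trivial. Put $w'=s_{i_{2}}\cdots s_{i_{\ell}}$; then (again by Definition~\ref{d:repform}) $f_{i_{2}}^{(a_{2})}\cdots f_{i_{\ell}}^{(a_{\ell})}.u_{\lambda}=u_{w'\lambda}$, whence $u_{w\lambda}=f_{i_{1}}^{(a_{1})}.u_{w'\lambda}$, while $\ell(s_{i_{1}}w')>\ell(w')$ forces $a_{1}=\langle h_{i_{1}},w'\lambda\rangle\geq0$ and, by standard properties of extremal weight vectors (cf.\ \cite{Lus:intro}), $e_{i_{1}}.u_{w'\lambda}=0$; thus $u_{w'\lambda}$ is the highest weight vector of an $i_{1}$-string of length $a_{1}$. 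Writing $Z:=f_{i_{2}}^{(b_{2})}\cdots f_{i_{\ell}}^{(b_{\ell})}.u_{\lambda}$ and using $\varphi(f_{i}^{(b)})=e_{i}^{(b)}$, the $\varphi$-adjointness of $(\ ,\ )_{\lambda}^{\varphi}$, and the rank-one identity $e_{i_{1}}^{(b_{1})}f_{i_{1}}^{(a_{1})}.u_{w'\lambda}=f_{i_{1}}^{(a_{1}-b_{1})}.u_{w'\lambda}$ (valid for $b_{1}\leq a_{1}$, zero otherwise), the left-hand side above equals $(f_{i_{1}}^{(a_{1}-b_{1})}.u_{w'\lambda},Z)_{\lambda}^{\varphi}$ if $b_{1}\leq a_{1}$ and $0$ if $b_{1}>a_{1}$. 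For $b_{1}=a_{1}$ this is $(u_{w'\lambda},Z)_{\lambda}^{\varphi}$, which by the inductive hypothesis applied to $w'$ with the reduced word $(i_{2},\dots,i_{\ell})$ (whose extremal exponents are again $a_{2},\dots,a_{\ell}$) equals $\delta_{(b_{2},\dots,b_{\ell}),(a_{2},\dots,a_{\ell})}$, hence $\delta_{\bm{b},\bm{a}}$ altogether.

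The remaining, and crucial, case is $0<b_{1}<a_{1}$, where the pairing must vanish; this is the main obstacle of the argument. Here one uses that $Z$ lies in the Demazure module $V_{w'}(\lambda)=\Uq^{+}.u_{w'\lambda}$ — a standard property of Demazure modules, consistent with Proposition~\ref{p:Demazure}(1) (see \cite{Kas:Dem}). Writing $Z=\sum_{j}P_{j}.u_{w'\lambda}$ with each $P_{j}$ homogeneous of weight in $Q_{+}$, and moving $f_{i_{1}}^{(m)}$ with $m=a_{1}-b_{1}>0$ across the form, one is left with $(u_{w'\lambda},e_{i_{1}}^{(m)}P_{j}.u_{w'\lambda})_{\lambda}^{\varphi}$; but $e_{i_{1}}^{(m)}P_{j}$ has weight $m\alpha_{i_{1}}+\wt P_{j}\in Q_{+}\setminus\{0\}$, so $e_{i_{1}}^{(m)}P_{j}.u_{w'\lambda}$ has weight different from $w'\lambda$ and the pairing is $0$. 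Intuitively: $u_{w'\lambda}$ is extremal, hence $\alpha_{i_{1}}$-highest, so its interior $i_{1}$-string vectors $f_{i_{1}}^{(m)}.u_{w'\lambda}$ ($m>0$) are orthogonal to the Demazure module $\Uq^{+}.u_{w'\lambda}$, to which $Z$ belongs. Everything outside this step is either formal, as in (2), or a calibration of $q$-exponents, as in (1).
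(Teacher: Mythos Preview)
The paper does not prove this proposition; it is quoted from \cite{Berenstein} without argument. Your proposal is a correct self-contained proof, so there is nothing to compare against in the paper itself.

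A few remarks on your argument. Part~(2) is exactly right and purely formal. Your sketch of~(1) via the twisted coalgebra structure of $\Uq^{-}$ is the standard route (and is essentially Berenstein's); the only thing to watch is the precise sign convention in $r(f_i^{(n)})$, which must be matched to the paper's form $(\ ,\ )_L$ and to the relations of $\mathcal{P}_{\bm{i}}$, but this is bookkeeping, as you say. For~(3), the reduction to the orthogonality relation and the induction step for $b_1\geq a_1$ are clean. The case $0<b_1<a_1$ is indeed the substantive point, and your use of the Demazure module is the right idea: the fact you invoke, that $f_{i_2}^{(b_2)}\cdots f_{i_\ell}^{(b_\ell)}.u_\lambda\in V_{w'}(\lambda)$, follows from the recursive description $V_{s_i v}(\lambda)=\sum_{k\geq0}f_i^{(k)}V_{v}(\lambda)$ for $s_iv>v$, which is standard (see \cite{Kas:Dem}) though not stated explicitly in the present paper. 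Once $Z\in\Uq^{+}.u_{w'\lambda}$, the weight argument you give is valid because all weights of $V_{w'}(\lambda)$ lie in $w'\lambda+Q_+$, so $e_{i_1}^{(m)}P_j.u_{w'\lambda}$ has weight strictly above $w'\lambda$ and is orthogonal to $u_{w'\lambda}$. Note that a naive weight comparison between $f_{i_1}^{(m)}.u_{w'\lambda}$ and $Z$ alone would \emph{not} suffice when $i_1$ reappears among $i_2,\dots,i_\ell$, so the Demazure input really is needed here.
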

\begin{remark}
For any $\bm{i}=(i_1,\dots, i_{\ell})\in I^{\ell}$, we have $\Phi_{\bm{i}}((1-q_i^2)f_i)=\sum_{k; i_k=i}t_k$.
\end{remark}
\begin{definition}\label{d:Feiginembed}
Let $w\in W$ and $\bm{i}\in I(w)$. By Proposition \ref{p:Feigin} and the universality of localization, we have the embedding of the algebra $\mathbf{A}_q[N_-^{w}]\to \mathcal{L}_{\bm{i}}$, also denoted by $\Phi_{\bm{i}}$.
\end{definition}
The following is the main theorem in this paper. See also Corollary \ref{c:CA}. Recall the notation in Definition \ref{d:Weyl}. 
\begin{theorem}\label{t:monomial}
Let $w\in W$, $\bm{i}=(i_1,\dots, i_{\ell})\in I(w)$ and $k=1,\dots, \ell$. Then we have
\[
(\Phi_{\bm{i}}\circ \eta_{w,q}^{-1})([D_{w_{\leq k}\varpi_{i_{k}}, \varpi_{i_{k}}}])=\left(\prod_{j=1}^{k}q_{i_j}^{d_j(d_j+1)/2}\right) t_{1}^{-d_1}t_{2}^{-d_2}\cdots t_{k}^{-d_k},
\]
where $d_j:=\langle w_{\leq j}h_{i_j}, w_{\leq k}\varpi_{i_k}\rangle, j=1,\dots, k$.
\end{theorem}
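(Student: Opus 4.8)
The plan is to compute $(\Phi_{\bm{i}}\circ\eta_{w,q}^{-1})([D_{w_{\leq k}\varpi_{i_k},\varpi_{i_k}}])$ by combining the explicit inverse formula \eqref{inverse} with Proposition \ref{p:Feigin}(3). The crucial first step is to recognize that the unipotent quantum minor $D_{w_{\leq k}\varpi_{i_k},\varpi_{i_k}}$ can be rewritten, via Proposition \ref{p:replace}, in the form $D_{u_{w\lambda'},\Gup_{\lambda'}(b)}$ for suitable $\lambda'\in P_+$ and $b\in\mathscr{B}(\lambda')$ (with respect to the reduced word $\bm{i}$ and the index $k$). This is exactly the input needed so that \eqref{inverse} applies: we get $\eta_{w,q}^{-1}([D_{w_{\leq k}\varpi_{i_k},\varpi_{i_k}}]) = q^{(\lambda',\,\wt u - w\lambda')}[D_{w\lambda',\lambda'}]^{-1}[D_{u,u_{\lambda'}}]$ where $u=\Gup_{\lambda'}(b)$ has weight $\wt u = w_{\leq k}\varpi_{i_k}$ (this weight identification being the content of Proposition \ref{p:replace}, since $D_{u_{w\lambda'},u}=D_{w_{\leq k}\varpi_{i_k},\varpi_{i_k}}$ and weights are additive). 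Actually, I expect the cleaner route is to observe directly that $D_{w_{\leq k}\varpi_{i_k},\varpi_{i_k}} = D_{u_{w_{\leq k}\varpi_{i_k}},u_{\varpi_{i_k}}}$ and to use the defining property of $\eta_{w,q}$ in Proposition \ref{p:BZauto} together with $\eta_{w,q}^{-1}$ formula \eqref{inverse}.

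Next I would apply $\Phi_{\bm{i}}$. Since $\Phi_{\bm{i}}$ is an algebra homomorphism (Proposition \ref{p:Feigin}(1)) extending to the localization $\Aq[N_-^w]\to\mathcal{L}_{\bm{i}}$, it suffices to compute $\Phi_{\bm{i}}([D_{w\lambda',\lambda'}])$ and $\Phi_{\bm{i}}([D_{u,u_{\lambda'}}])$ separately. For the first, Proposition \ref{p:Feigin}(3) gives $\Phi_{\bm{i}}(D_{w\lambda',\lambda'}) = q_{\bm{i}}(\bm{a})\,t_1^{a_1}\cdots t_\ell^{a_\ell}$ with $a_r = \langle w_{\leq r}h_{i_r}, w\lambda'\rangle$. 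For the second factor $D_{u,u_{\lambda'}}$ where $u=\Gup_{\lambda'}(b)$, I would need a Feigin-homomorphism computation for a more general unipotent quantum matrix coefficient than a minor; the key point should be that $D_{u,u_{\lambda'}}$ — or rather its image — is again a $q$-power times a monomial $t_1^{c_1}\cdots t_\ell^{c_\ell}$ where the exponents $c_r$ are controlled by the crystal-theoretic description of $b$ via the reduced word $\bm{i}$. One expects $c_r = \langle w_{\leq r}h_{i_r}, w\lambda'\rangle - \langle w_{\leq r}h_{i_r}, w_{\leq k}\varpi_{i_k}\rangle$ or something of this shape, so that after the cancellation $[D_{w\lambda',\lambda'}]^{-1}[D_{u,u_{\lambda'}}]$ only the tail exponents $-d_r$, $r\le k$, survive (the exponents $r > k$ cancelling because $u_{w\lambda'}$ and $u$ agree beyond position $k$, i.e.\ $b\in\mathscr{B}_{w_{k\leq}}$-type combinatorics). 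Tracking the precise $q$-powers from $q_{\bm{i}}(\bm{a})$, from the prefactor $q^{(\lambda',\wt u - w\lambda')}$, and from the $q$-commutation relations $t_jt_k = q^{(\alpha_{i_j},\alpha_{i_k})}t_kt_j$ needed to reorder the product of two monomials, will produce the stated global factor $\prod_{j=1}^k q_{i_j}^{d_j(d_j+1)/2}$.

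The main obstacle I anticipate is precisely the computation of $\Phi_{\bm{i}}([D_{u,u_{\lambda'}}])$ for a general dual-canonical weight vector $u$, and the verification that it is monomial with the right exponents: this is where the deeper crystal combinatorics enters, and it is plausible the paper sets up an auxiliary lemma expressing $(D_{u,u_{\lambda'}}, f_{i_1}^{(a_1)}\cdots f_{i_\ell}^{(a_\ell)})_L$ in terms of the string parametrization of $b$ relative to $\bm{i}$, showing only one tuple $\bm{a}$ contributes. A secondary but genuinely fiddly obstacle is the bookkeeping of the powers of $q$: one must carefully combine (i) the $q_{\bm{i}}(\bm{a})$ normalization in the definition of $\Phi_{\bm{i}}$, (ii) the scalar $q^{(\lambda',\wt u - w\lambda')}$ from \eqref{inverse}, and (iii) the reordering cost when multiplying the two monomials in $\mathcal{L}_{\bm{i}}$, and check that everything collapses to $\prod_{j=1}^k q_{i_j}^{d_j(d_j+1)/2}$. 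I would handle this last part by first establishing the result "$\simeq$" up to a power of $q$ — which follows almost formally from monomiality plus Proposition \ref{p:Feigin}(3) — and then pinning down the exact power by a direct but careful accounting, perhaps by specializing to the case $k=\ell$ and $\bm{i}$ arbitrary, or by an induction on $k$ peeling off one simple reflection at a time.
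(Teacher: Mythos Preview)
Your setup via Proposition~\ref{p:replace} and formula~\eqref{inverse} is correct and is exactly how the paper begins. The genuine gap is the step you yourself flag as ``the main obstacle'': you need that $\Phi_{\bm{i}}([D_{\Gup_{\lambda'}(b),u_{\lambda'}}])$ is a single monomial in $\mathcal{L}_{\bm{i}}$, but you give no argument for this --- only the expectation that ``only one tuple $\bm{a}$ contributes'' via some unspecified string-parametrization lemma. That monomiality is precisely the nontrivial content of the theorem; it is equivalent to the statement you are trying to prove, so assuming it is circular. There is no soft or general reason for a dual canonical basis element to have monomial Feigin image (indeed most do not), and the particular $b$ produced by Proposition~\ref{p:replace} is not explicit enough for a direct string computation.

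The paper's proof does not attempt a direct Feigin computation for $D_{\Gup_{\lambda'}(b),u_{\lambda'}}$. Instead it runs an induction on $k$ that peels off $s_{i_1}$ and reduces to the shorter word $w_{2\le}=s_{i_2}\cdots s_{i_\ell}$ with its Feigin map $\Phi_{\bm{i}_{2\le}}$ and twist $\eta_{w_{2\le},q}$. The reduction requires three substantive claims: (i) identifying $D_{u_{w\lambda},\Gup_\lambda(\tilde f_{i_1}^{\max}b)}$ with $D_{w_{\le k}\varpi_{i_k},s_{i_1}\varpi_{i_k}}$ via Proposition~\ref{p:EFaction}; (ii) a computation using Lusztig's commutation formula for $xe_{i_1}^{(p)}$ and a $q$-binomial identity to show that $D_{u_{w\lambda},\Gup_\lambda(b_2)}$ (where $b_2=\tilde e_{i_1}^{\max}b$) factors as $q^{\bullet}D_{s_{i_1}\varpi_{i_1},\varpi_{i_1}}^X D_{w_{\le k}\varpi_{i_k},\varpi_{i_k}}$; and (iii) a braid-group calculation using $T_{i_1}$ and Proposition~\ref{p:kernel} showing $D_{u_{w_{2\le}\lambda},\Gup_\lambda(b_2)}=D_{s_{i_1}w_{\le k}\varpi_{i_k},\varpi_{i_k}}$, which is exactly the input for the inductive hypothesis with respect to $w_{2\le}$. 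You mention ``induction on $k$ peeling off one simple reflection'' only as a device for fixing the $q$-power after the fact, but in the paper this induction is the entire mechanism by which monomiality is established; the $q$-power bookkeeping is carried along simultaneously.
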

\begin{remark}\label{r:BR}
Up to some conventions\footnote{The difference of conventions can be adjusted by regarding $q$ and $x_i$ in \cite{BR} as $q^{-1}$ and $(1-q_i^2)f_i$ in our paper respectively. Then the Feigin homomorphism $\underline{\Psi_{\bm{i}}}\colon \Bbbk_q[N^w]\to \mathcal{L}_{\bm{i}}$ ($\Bbbk=\mathbb{Q}(q^{\frac{1}{2}})$) in \cite[(6.17)]{BR} coincides with our $\Phi_{\bm{i}}\colon \mathbf{A}_q[N_-^{w}]\to \mathcal{L}_{\bm{i}}$ by extending our base field to $\mathbb{Q}(q^{\frac{1}{2}})$, and the generalized quantum minor $\Delta_{w\lambda}$ ($w\in W$, $\lambda\in P_+$) in \cite{BR} is equal to our $q^{-(w\lambda-\lambda, w\lambda-\lambda)/4-(w\lambda-\lambda, \rho)/2}D_{w\lambda, \lambda}$ ($\rho:=\sum_{i\in I}\varpi_i$). The (conjectural) quantum twist automorphism $\eta_{\bm{i}}$ in \cite{BR} corresponds to our $q^{-(-,\rho)}\circ \eta_{w,q}$ via $\Phi_{\bm{i}}$, here $q^{-(-,\rho)}$ is the algebra automorphism on $\mathbf{A}_q[N_-^{w}]$ given by $x\mapsto q^{-(\weight x,\rho)}x$. 
}, Theorem \ref{t:monomial} is a generalization of \cite[Corollary 1.2]{BR}, where they treat the case that $w$ is the square of an acyclic Coxeter element. Moreover, by Theorem \ref{t:monomial}, we can say that the quantum twist automorphism $\eta_{w, q}$ is a generalization of Berenstein-Rupel's quantum twist automorphism \cite[Theorem 2.10]{BR} and corresponds to the one proposed in \cite[Conjecture 2.12 (c), Conjecture 6.20]{BR} (see the footnote for the precise comparison). Note that Berenstein-Rupel's (conjectural) quantum twist automorphisms are proposed as automorphisms on certain upper quantum cluster algebras, but it is now shown that their upper quantum cluster algebra $\mathcal{U}_{\bm{i}}$ coincides with $\mathbf{A}_q[N_-^{w}]\otimes_{\mathbb{Q}(q)}\mathbb{Q}(q^{\frac{1}{2}})$ via $\Phi_{\bm{i}}$ by \cite[Theorem 8.2, Theorem 10.1]{GY:Memo} (see also the twist isomorphism in \cite{KO2} or Proposition \ref{p:BZisom}) and Theorem \ref{t:monomial}. This coincidence was also conjectured in \cite[Conjecture 2.12 (a)]{BR}. 

The compatibility between quantum twist automorphisms and dual canonical bases, which is proved in \cite[Theorem 6.1]{KO2}, corresponds to Berenstein-Rupel's conjectural property  \cite[Conjecture  2.17 (a)]{BR}. Note that our approach does not refer to quantum cluster algebra structures unlike Berenstein-Rupel's one.  
\end{remark}
\begin{proof}[{Proof of Theorem \ref{t:monomial}}]
If $w=e$, there is nothing to prove. From now on, we assume that $\ell(:=\ell (w))$ is greater than $0$. 

The proof is by induction on $k$. Let $k=1$. Take $\lambda\in P_+$ such that $\langle h_{i_1}, w\lambda \rangle <0$. Then it is easily seen that
\[
D_{s_{i_1}\varpi_{i_{1}}, \varpi_{i_{1}}}=[\langle h_{i_1}, w_{2\leq}\lambda\rangle]_{i_1}^{-1}D_{u_{w\lambda},\;e_{i_1}.u_{w\lambda}}.
\]
Hence, by \eqref{inverse},
\[
(\Phi_{\bm{i}}\circ \eta_{w, q}^{-1})([D_{s_{i_1}\varpi_{i_{1}}, \varpi_{i_{1}}}])=q_{i_1}^{\langle h_{i_1}, \lambda\rangle}[\langle h_{i_1}, w_{2\leq}\lambda\rangle]_{i_1}^{-1}\Phi_{\bm{i}}\left([D_{w\lambda, \lambda}]^{-1}[D_{e_{i_1}.u_{w\lambda}, u_{\lambda}}]\right).
\]
By Proposition \ref{p:Feigin} (3), we have
\begin{align*}
\Phi_{\bm{i}}\left([D_{w\lambda, \lambda}]^{-1}\right)&=q_{\bm{i}}(\bm{c})^{-1} t_{\ell}^{-c_{\ell}}\cdots t_1^{-c_1},\\
\Phi_{\bm{i}}\left([D_{e_{i_1}.u_{w\lambda}, u_{\lambda}}]\right)&=q_{\bm{i}}(\bm{c}-(1,0,\dots, 0))[c_1]_{i_1} t_1^{c_1-1}t_2^{c_2}\cdots t_{\ell}^{c_{\ell}},
\end{align*}
where $\bm{c}=(c_1,\dots, c_{\ell})$ with $c_j:=\langle h_{i_j}, w_{j+1\leq}\lambda \rangle$. Combining the above equalities, we obtain
\begin{align*}
(\Phi_{\bm{i}}\circ \eta_{w, q}^{-1})([D_{s_{i_1}\varpi_{i_{1}}, \varpi_{i_{1}}}])&=q_{i_1}^{\langle h_{i_1}, \lambda-w_{2\leq}\lambda-\sum_{j=2}^{\ell}c_j\alpha_{i_j}\rangle+1}t_1^{-1}\\
&=q_{i_1}t_1^{-1}.
\end{align*}
This proves the assertion in the case $k=1$. 

Assume that $k>1$. By Proposition \ref{p:replace} and Remark \ref{r:Demazure}, there exist $\lambda\in P_+$ and $b\in \mathscr{B}_w(\lambda)$ such that $D_{u_{w\lambda}, \Gup_{\lambda}(b)}=D_{w_{\leq k}\varpi_{i_{k}}, \varpi_{i_{k}}}$.
\begin{claim}\label{cl:minor}
$D_{u_{w\lambda}, \Gup_{\lambda}(\tilde{f}_{i_1}^{\mathrm{max}}b)}=D_{w_{\leq k}\varpi_{i_{k}}, s_{i_1}\varpi_{i_{k}}}$. Here $\tilde{f}_{i_1}^{\mathrm{max}}b:=\tilde{f}_{i_1}^{\varphi_{i_1}(b)}b=\tilde{f}_{i_1}^{\delta_{i_1, i_k}}b$. 
\end{claim}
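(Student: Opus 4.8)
The plan is to rewrite both sides of the claimed equality as unipotent quantum matrix coefficients of the form $D_{u_{v\mu},\,f_{i_1}^{(m)}.u'}$ and then to reduce everything to one \emph{unconditional} formula expressing such a coefficient through $D_{u_{v\mu},u'}\in\Uq^-$. First I would handle the right-hand side. Since $b_{\varpi_{i_k}}$ is the highest weight element of $\mathscr{B}(\varpi_{i_k})$ we have $\varepsilon_{i_1}(b_{\varpi_{i_k}})=0$, hence $\varphi_{i_1}(b_{\varpi_{i_k}})=\langle h_{i_1},\varpi_{i_k}\rangle=\delta_{i_1,i_k}$, and the extremal-vector formula of Definition \ref{d:repform} (or Proposition \ref{p:extremal}) gives $u_{s_{i_1}\varpi_{i_k}}=f_{i_1}^{(\delta_{i_1,i_k})}.u_{\varpi_{i_k}}$, which by Proposition \ref{p:EFaction} (applied in $V(\varpi_{i_k})$, using $u_{\varpi_{i_k}}=\Gup_{\varpi_{i_k}}(b_{\varpi_{i_k}})$ from Example \ref{e:ext}) equals $\Gup_{\varpi_{i_k}}(\tilde f_{i_1}^{\mathrm{max}}b_{\varpi_{i_k}})$. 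Thus the right-hand side is $D_{u_{w_{\le k}\varpi_{i_k}},\,f_{i_1}^{(\delta_{i_1,i_k})}.u_{\varpi_{i_k}}}$, while by the choice of $\lambda$ and $b$ both $D_{u_{w\lambda},\Gup_\lambda(b)}$ and $D_{u_{w_{\le k}\varpi_{i_k}},u_{\varpi_{i_k}}}$ are equal to one and the same element $D:=D_{w_{\le k}\varpi_{i_k},\varpi_{i_k}}$ of $\Uq^-$.

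The key point is the identity
\[
D_{u,\,f_i^{(m)}.u'}=\frac{1}{[m]_i!\,(1-q_i^2)^m}\,({}_ie')^m\bigl(D_{u,u'}\bigr),
\]
valid for all $\mu\in P_+$, $u,u'\in V(\mu)$, $i\in I$ and $m\ge 0$. It follows from Definition \ref{d:minor}: $(D_{u,f_i^{(m)}.u'},x)_L=(u,(xf_i^{(m)}).u')^\varphi_\mu=(D_{u,u'},xf_i^{(m)})_L$ for $x\in\Uq^-$, and iterating the relation $(af_i,c)_L=(1-q_i^2)^{-1}(a,{}_ie'(c))_L$ from Definition \ref{d:Lusform}, together with the symmetry of $(\,,\,)_L$, turns the right-hand side into $([m]_i!)^{-1}(1-q_i^2)^{-m}\bigl(({}_ie')^m(D_{u,u'}),x\bigr)_L$; now invoke nondegeneracy of $(\,,\,)_L$. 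Applying this with $i=i_1$ to the two realizations above yields, for every $m\ge 0$,
\[
D_{u_{w\lambda},\,f_{i_1}^{(m)}.\Gup_\lambda(b)}=\frac{({}_{i_1}e')^m(D)}{[m]_{i_1}!\,(1-q_{i_1}^2)^m}=D_{u_{w_{\le k}\varpi_{i_k}},\,f_{i_1}^{(m)}.u_{\varpi_{i_k}}}.
\]

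It remains to pin down $\varphi_{i_1}(b)$. By Proposition \ref{p:EFaction}, $f_{i_1}^{(m)}.\Gup_\lambda(b)=\Gup_\lambda(\tilde f_{i_1}^{m}b)$ for $m\le\varphi_{i_1}(b)$ and is $0$ for $m>\varphi_{i_1}(b)$; since $\bm{i}$ begins with $i_1$ and $b\in\mathscr{B}_w(\lambda)$, the Demazure set $\mathscr{B}_w(\lambda)$ is stable under $\tilde f_{i_1}$ (Proposition \ref{p:Demazure}), so $\tilde f_{i_1}^{m}b\in\mathscr{B}_w(\lambda)$ and hence $D_{u_{w\lambda},\Gup_\lambda(\tilde f_{i_1}^{m}b)}\ne 0$ for $m\le\varphi_{i_1}(b)$ by Remark \ref{r:Demazure}. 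Combined with the previous display, $\varphi_{i_1}(b)=\max\{m\ge 0\mid({}_{i_1}e')^m(D)\ne 0\}$; running the identical argument inside $V(\varpi_{i_k})$ with the reduced word $(i_1,\dots,i_k)$ of $w_{\le k}$ and $b_{\varpi_{i_k}}\in\mathscr{B}_{w_{\le k}}(\varpi_{i_k})$ gives the same description of $\varphi_{i_1}(b_{\varpi_{i_k}})$, so $\varphi_{i_1}(b)=\varphi_{i_1}(b_{\varpi_{i_k}})=\delta_{i_1,i_k}$ — which is the last assertion of the claim. Finally, setting $m=\delta_{i_1,i_k}$ in the second display and using Proposition \ref{p:EFaction} once more gives $D_{u_{w\lambda},\Gup_\lambda(\tilde f_{i_1}^{\mathrm{max}}b)}=D_{u_{w_{\le k}\varpi_{i_k}},f_{i_1}^{(\delta_{i_1,i_k})}.u_{\varpi_{i_k}}}=D_{u_{w_{\le k}\varpi_{i_k}},u_{s_{i_1}\varpi_{i_k}}}=D_{w_{\le k}\varpi_{i_k},s_{i_1}\varpi_{i_k}}$, as desired. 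I expect the only genuinely delicate part to be this last paragraph — extracting $\varphi_{i_1}(b)$ from $D$ alone via Remark \ref{r:Demazure} and Demazure-stability — whereas the displayed algebraic identity is routine once written down; it is precisely its unconditional character that makes the transfer between the auxiliary weight $\lambda$ and $\varpi_{i_k}$ possible.
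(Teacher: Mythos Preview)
Your approach is essentially the same as the paper's: both arguments rest on the observation that $D_{u_{w\lambda},\,f_{i_1}^{(p)}.\Gup_\lambda(b)}=D_{u_{w_{\le k}\varpi_{i_k}},\,f_{i_1}^{(p)}.u_{\varpi_{i_k}}}$ for every $p\ge 0$ (because both sides depend only on $D=D_{w_{\le k}\varpi_{i_k},\varpi_{i_k}}\in\Uq^-$), and then compare the vanishing patterns on the two sides to force $\varphi_{i_1}(b)=\delta_{i_1,i_k}$ and identify the value at $p=\delta_{i_1,i_k}$. You make the transfer explicit via the identity $D_{u,f_i^{(m)}.u'}=\dfrac{1}{[m]_i!(1-q_i^2)^m}\,({}_ie')^m(D_{u,u'})$, whereas the paper leaves this implicit and simply writes down the two vanishing patterns directly; both are fine.

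One small imprecision: Proposition~\ref{p:EFaction} only asserts $f_{i_1}^{(m)}.\Gup_\lambda(b)=\Gup_\lambda(\tilde f_{i_1}^{\max}b)$ at the single value $m=\varphi_{i_1}(b)$ (together with vanishing for $m>\varphi_{i_1}(b)$); it does \emph{not} give $f_{i_1}^{(m)}.\Gup_\lambda(b)=\Gup_\lambda(\tilde f_{i_1}^{m}b)$ for all $m\le\varphi_{i_1}(b)$, and in general that equality fails. This does not damage your argument, however: to conclude $\varphi_{i_1}(b)=\max\{m\ge 0\mid({}_{i_1}e')^m(D)\ne 0\}$ you only need the case $m=\varphi_{i_1}(b)$ --- nonvanishing there plus vanishing for $m>\varphi_{i_1}(b)$ already determines the maximum, since $({}_{i_1}e')^{m}(D)\ne 0$ automatically forces $({}_{i_1}e')^{m'}(D)\ne 0$ for $m'<m$. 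With that correction (simply replace ``for $m\le\varphi_{i_1}(b)$'' by ``for $m=\varphi_{i_1}(b)$'' throughout that paragraph), the proof is correct and matches the paper's.
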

\begin{proof}[Proof of Claim \ref{cl:minor}]
Let $\delta:=\delta_{i_1, i_k}$. Since $u_{s_{i_1}\varpi_{i_{k}}}=f_{i_1}^{\delta}.u_{\varpi_{i_{k}}}$, we have 
\[
D_{u_{w\lambda}, f_{i_1}^{(p)}.\Gup_{\lambda}(b)}=\begin{cases}D_{w_{\leq k}\varpi_{i_{k}}, s_{i_1}\varpi_{i_{k}}}\neq 0&\text{if}\; p=\delta,\\ 0 &\text{if}\;p>\delta.\end{cases}
\]
On the other hand, by Proposition \ref{p:EFaction}, 
\[
f_{i_1}^{(p)}.\Gup_{\lambda}(b)=\begin{cases}\Gup_{\lambda}(\tilde{f}_{i_1}^{\mathrm{max}}b)&\text{if}\; p=\varphi_{i_1}(b),\\ 0 &\text{if}\;p>\varphi_{i_1}(b),\end{cases}
\]
and, by Proposition \ref{p:Demazure},  $\tilde{f}_{i_1}^{\mathrm{max}}b\in \mathscr{B}_w(\lambda)$. Hence, 
\[
D_{u_{w\lambda}, f_{i_1}^{(p)}.\Gup_{\lambda}(b)}=\begin{cases}D_{u_{w\lambda}, \Gup_{\lambda}(\tilde{f}_{i_1}^{\mathrm{max}}b)}\neq 0&\text{if}\; p=\varphi_{i_1}(b),\\ 0 &\text{if}\;p>\varphi_{i_1}(b).\end{cases}
\]
Combining the above arguments, we obtain $\varphi_{i_1}(b)=\delta$ and $D_{w_{\leq k}\varpi_{i_{k}}, s_{i_1}\varpi_{i_{k}}}=D_{u_{w\lambda}, \Gup_{\lambda}(\tilde{f}_{i_1}^{\mathrm{max}}b)}$.%\qed
\end{proof}
We write $b_2:=\tilde{e}_{i_1}^{\mathrm{max}}b=\tilde{e}_{i_1}^{\mathrm{max}}\tilde{f}_{i_1}^{\mathrm{max}}b$. 
\begin{claim}\label{cl:Eaction}
We have
\[
D_{u_{w\lambda}, \Gup_{\lambda}(b_2)}=q_{i_1}^{(X-1-2\langle h_{i_1}, w_{\leq k}\varpi_{i_k}\rangle)X/2}D_{s_{i_1}\varpi_{i_1}, \varpi_{i_1}}^{X}D_{w_{\leq k}\varpi_{i_k}, \varpi_{i_k}},
\]
where $X:=-\langle h_{i_1}, w\lambda-w_{\leq k}\varpi_{i_k}\rangle$.
\end{claim}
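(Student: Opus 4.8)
The plan is to compute the left-hand side by repeatedly applying Proposition \ref{p:EFaction}, tracking the relevant $q$-powers. First observe that $b_2=\tilde e_{i_1}^{\mathrm{max}}b$ satisfies $\varepsilon_{i_1}(b_2)=0$, so $\Gup_\lambda(b_2)\in\Ker(e_{i_1}.)$. By Claim \ref{cl:minor} we know $\varphi_{i_1}(b)=\delta_{i_1,i_k}$, and from the definition $X=-\langle h_{i_1}, w\lambda-w_{\le k}\varpi_{i_k}\rangle$ one checks that $\varepsilon_{i_1}(b)=\varepsilon_{i_1}(\tilde f_{i_1}^{\mathrm{max}}b)=X$ (this uses $\langle h_{i_1},w\lambda\rangle<0$, which can be arranged when choosing $\lambda$ in Proposition \ref{p:replace}, together with the weight of $b$). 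Hence $b=\tilde e_{i_1}^{\,?}b_2$ with the exponent dictated by $X$ and $\delta_{i_1,i_k}$; equivalently $\Gup_\lambda(b)$ is obtained from $\Gup_\lambda(b_2)$ by applying a divided power $f_{i_1}^{(m)}$ for an appropriate $m$, via the second line of Proposition \ref{p:EFaction}.

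Next I would transfer this relation through the pairing defining $D_{u,u'}$. The key identity is that for $u\in V(\lambda)$ and a weight vector $u'\in\Ker(e_{i_1}.)$, one has a clean formula for $D_{u_{w\lambda},\,f_{i_1}^{(m)}.u'}$ in terms of $D_{u_{w\lambda},u'}$ and the minor $D_{s_{i_1}\varpi_{i_1},\varpi_{i_1}}$, which itself maps under $\Phi_{\bm i}$ to $q_{i_1}t_1^{-1}$ by the $k=1$ case already proved. More precisely, I expect an equality in $\Uq^-$ of the shape
\[
D_{u_{w\lambda},\,f_{i_1}^{(m)}.u'} \;=\; (\text{$q_{i_1}$-power})\cdot (\text{$[\,\cdot\,]_{i_1}$-factor})\cdot D_{s_{i_1}\varpi_{i_1},\varpi_{i_1}}^{\,m}\,D_{u_{w\lambda},u'},
\]
obtained by induction on $m$ from the $e'_{i_1}$-Leibniz rule in Definition \ref{d:Lusform} and the compatibility $(\ast x,\ast y)_L=(x,y)_L$; the minor power appears because $D_{s_{i_1}\varpi_{i_1},\varpi_{i_1}}$ is, up to scalar, the ``elementary'' matrix coefficient $D_{u_{w\lambda},e_{i_1}.u_{w\lambda}}$ used in the $k=1$ step, and successive $f_{i_1}$'s on the right of $D$ multiply these. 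Applying this with $u'=\Gup_\lambda(b_2)$, $m$ the exponent found above, and $D_{u_{w\lambda},u'}=D_{u_{w\lambda},\Gup_\lambda(b)}=D_{w_{\le k}\varpi_{i_k},\varpi_{i_k}}$ (wait: here I must instead solve for $D_{u_{w\lambda},\Gup_\lambda(b_2)}$, so I invert the relation), I obtain
\[
D_{u_{w\lambda},\Gup_\lambda(b_2)} = (\text{$q_{i_1}$-power})\cdot D_{s_{i_1}\varpi_{i_1},\varpi_{i_1}}^{\,X}\,D_{w_{\le k}\varpi_{i_k},\varpi_{i_k}},
\]
and it remains to identify the exponent $X$ and pin down the $q_{i_1}$-power as $(X-1-2\langle h_{i_1},w_{\le k}\varpi_{i_k}\rangle)X/2$.

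The bookkeeping of the $q_{i_1}$-exponent is the main obstacle. It comes from two sources: the $q_{i_1}$-binomial coefficients produced when expanding $f_{i_1}^{(m)}$ against the Lusztig form (these contribute the $\binom{X}{2}$-type term), and the commutation constant between $D_{s_{i_1}\varpi_{i_1},\varpi_{i_1}}$ and $D_{w_{\le k}\varpi_{i_k},\varpi_{i_k}}$, governed by Proposition \ref{p:comm}(2) with the weights $\wt D_{w_{\le k}\varpi_{i_k},\varpi_{i_k}} = w_{\le k}\varpi_{i_k}-\varpi_{i_k}$ and the pair $(\varpi_{i_1}+s_{i_1}\varpi_{i_1})$ paired against that weight — this is where the $-2\langle h_{i_1},w_{\le k}\varpi_{i_k}\rangle$ term enters, since $\langle h_{i_1},\varpi_{i_1}+s_{i_1}\varpi_{i_1}\rangle$ relates to it after using $\langle h_{i_1},\varpi_{i_1}\rangle=1$ and $s_{i_1}$-invariance on the $\varpi$-side. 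I would first establish the identity up to an unspecified power of $q$ (which is cheap, using $\simeq$ as in the theorem statement) to fix the shape, and then determine the exact exponent either by a direct but careful induction on $m$, or — more efficiently — by evaluating both sides under $\Phi_{\bm i}$ and comparing the scalar $q$-powers against the known formula $\Phi_{\bm i}(D_{s_{i_1}\varpi_{i_1},\varpi_{i_1}})=q_{i_1}t_1^{-1}$ together with Proposition \ref{p:Feigin}(3); the latter route reduces the whole claim to an equality of monomials in $\mathcal L_{\bm i}$, where the $q$-powers can be read off mechanically.
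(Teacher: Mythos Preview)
Your plan has two substantive gaps. First, the ``expected'' identity
\[
D_{u_{w\lambda},\,f_{i_1}^{(m)}.u'} \;=\; (\text{$q_{i_1}$-power})\,D_{s_{i_1}\varpi_{i_1},\varpi_{i_1}}^{\,m}\,D_{u_{w\lambda},u'}
\]
is never established, and in the direction you set it up it cannot be used: from $(y,xf_{i_1})_L=\frac{1}{1-q_{i_1}^2}({_{i_1}e'}y,x)_L$ one only gets $D_{u_{w\lambda},f_{i_1}^{(m)}.u'}=\frac{1}{(1-q_{i_1}^2)^m[m]_{i_1}!}\,({_{i_1}e'})^m D_{u_{w\lambda},u'}$, so applying $f_{i_1}^{(m)}$ to $u'=\Gup_\lambda(b_2)$ expresses a \emph{known} minor as $({_{i_1}e'})^m$ of the \emph{unknown} $D_{u_{w\lambda},\Gup_\lambda(b_2)}$, and ${_{i_1}e'}$ is not invertible. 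The paper proceeds in the opposite direction, writing $\Gup_\lambda(b_2)=e_{i_1}^{(X)}.\Gup_\lambda(b)$ and using Lusztig's commutation formula $xe_{i_1}^{(p)}=\sum_{p'+p''+p'''=p}A(p',p'',p''')\,t_{i_1}^{-p'''}e_{i_1}^{(p'')}({_{i_1}e'})^{p'}(e'_{i_1})^{p'''}(x)\,t_{i_1}^{p'}$; the $p''>0$ terms vanish because $f_{i_1}.u_{w\lambda}=0$, and the remaining $e'_{i_1},{_{i_1}e'}$ dualize to \emph{both} left and right multiplication of $D_{w_{\leq k}\varpi_{i_k},\varpi_{i_k}}$ by $f_{i_1}\propto D_{s_{i_1}\varpi_{i_1},\varpi_{i_1}}$. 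After commuting via Proposition~\ref{p:comm} this yields a genuine \emph{sum} $\sum_{p'+p''=X}(\cdots)$ of distinct $q$-coefficients, collapsed to a single power only by the Gauss identity $\sum_{t=0}^{X}(-1)^t q_{i_1}^{t(X+1)}\genfrac{[}{]}{0pt}{}{X}{t}_{i_1}=\prod_{j=1}^{X}(1-q_{i_1}^{2j})$. This $q$-binomial cancellation is the heart of the claim; your assertion that the identity ``up to a $q$-power'' is cheap skips exactly this step.

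Second, your proposed shortcut of fixing the $q$-exponent by applying $\Phi_{\bm i}$ is circular. The value of $\Phi_{\bm i}$ on $[D_{u_{w\lambda},\Gup_\lambda(b_2)}]$ (equivalently, after $\eta_{w,q}^{-1}$) is precisely what the induction in Theorem~\ref{t:monomial} is computing \emph{using} Claim~\ref{cl:Eaction}; there is no independent evaluation of the left-hand side under $\Phi_{\bm i}$ to compare against.
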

\begin{proof}[Proof of Claim \ref{cl:Eaction}]
By \cite[Corollary 3.1.8]{Lus:intro}, for $p\in\mathbb{Z}_{\geq 0}$ and $x\in \mathbf{U}_q^-$, we have
\[
xe_{i_1}^{(p)}=\sum_{p'+p''+p'''=p}A(p', p'', p''')k_{i_1}^{-p'''}e_{i_1}^{(p'')}({_{i_1}e'})^{p'}(e'_{i_1})^{p'''}(x)k_{i_1}^{p'},
\]
where
\[
A(p', p'', p'''):=(-q_{i_1})^{p'''}q_{i_1}^{p'p''+p'p'''+p''p'''+{p'}^2}\frac{1}{(1-q_{i_1}^{2})^{p'}[p']_{i_1}!}\frac{1}{(1-q_{i_1}^{2})^{p'''}[p''']_{i_1}!}.
\]
Therefore, for $x\in \mathbf{U}_q^-$, we have 
\begin{align*}
&(D_{u_{w\lambda}, e_{i_1}^{(p)}.\Gup_{\lambda}(b)}, x)_L\\
&=(u_{w\lambda}, xe_{i_1}^{(p)}.\Gup_{\lambda}(b))_{\lambda}^{\varphi}\\
&=\sum_{p'+p''+p'''=p}A(p', p'', p''')(u_{w\lambda}, k_{i_1}^{-p'''}e_{i_1}^{(p'')}({_{i_1}e'})^{p'}(e'_{i_1})^{p'''}(x)k_{i_1}^{p'}.\Gup_{\lambda}(b))_{\lambda}^{\varphi}\\
&=\sum_{p'+p''=p}A(p', 0, p'')(u_{w\lambda}, k_{i_1}^{-p''}({_{i_1}e'})^{p'}(e'_{i_1})^{p''}(x)k_{i_1}^{p'}.\Gup_{\lambda}(b))_{\lambda}^{\varphi}\\
&=\sum_{p'+p''=p}A(p', 0, p'')q_{i_1}^{p'\langle h_{i_1}, w\lambda+\varpi_{i_k}-w_{\leq k}\varpi_{i_k}\rangle-p''\langle h_{i_1}, w\lambda\rangle}(D_{w_{\leq k}\varpi_{i_k}, \varpi_{i_k}}, ({_{i_1}e'})^{p'}(e'_{i_1})^{p''}(x))_{L}\\
&=\sum_{p'+p''=p}A(p', 0, p'')q_{i_1}^{p'\langle h_{i_1}, w\lambda-2w_{\leq k}\varpi_{i_k}\rangle-p''\langle h_{i_1}, w\lambda\rangle}(D_{s_{i_1}\varpi_{i_1}, \varpi_{i_1}}^{p}D_{w_{\leq k}\varpi_{i_k}, \varpi_{i_k}}, x)_{L}.
\end{align*}
Note that the last equality follows from \eqref{eq:adjoint} and Proposition \ref{p:comm}. Therefore we have 
\begin{align*}
&D_{u_{w\lambda}, e_{i_1}^{(p)}.\Gup_{\lambda}(b)}\\
&=\sum_{p'+p''=p}A(p', 0, p'')q_{i_1}^{p'\langle h_{i_1}, w\lambda-2w_{\leq k}\varpi_{i_k}\rangle-p''\langle h_{i_1}, w\lambda\rangle}D_{s_{i_1}\varpi_{i_1}, \varpi_{i_1}}^{p}D_{w_{\leq k}\varpi_{i_k}, \varpi_{i_k}}.
\end{align*}
In particular, since $\Gup_{\lambda}(b_2)=e_{i_1}^{(\varepsilon_{i_1}(b))}\Gup_{\lambda}(b)=e_{i_1}^{(-\langle h_{i_1}, w\lambda-w_{\leq k}\varpi_{i_k}\rangle)}\Gup_{\lambda}(b)$ by Claim \ref{cl:minor}, we have
\begin{align}
&D_{u_{w\lambda}, \Gup_{\lambda}(b_2)}\label{binomform}\\
&=\frac{q_{i_1}^{-\langle h_{i_1}, w_{\leq k}\varpi_{i_k}\rangle X}}{(1-q_{i_1}^2)^X}\left(\sum_{p'+p''=X}(-q_{i_1})^{p''}q_{i_1}^{p''X}\frac{1}{[p']_{i_1}![p'']_{i_1}!}\right) D_{s_{i_1}\varpi_{i_1}, \varpi_{i_1}}^{X}D_{w_{\leq k}\varpi_{i_k}, \varpi_{i_k}}. \notag
\end{align}
Recall that $X=-\langle h_{i_1}, w\lambda-w_{\leq k}\varpi_{i_k}\rangle$. By the way, the following equality is well-known. See for instance \cite[1.3.1]{Lus:intro}.
\[
\sum_{t=0}^{a}q^{t(a-1)}\frac{[a]!}{[t]![a-t]!}z^t=\prod_{j=0}^{a-1}(1+q^{2j}z)
\]
for $a\in \mathbb{Z}_{\geq 0}$. Substituting $q$ by $q_{i_1}$, $a$ by $X$ and $z$ by $-q_{i_1}^2$, we have
\[
\sum_{t=0}^{X}(-q_{i_1})^{t}q_{i_1}^{tX}\frac{[X]_{i_1}!}{[t]_{i_1}![X-t]_{i_1}!}=\prod_{j=1}^{X}(1-q_{i_1}^{2j}).
\]
Combining this equality with \eqref{binomform}, we obtain
\begin{align*}
D_{u_{w\lambda}, \Gup_{\lambda}(b_2)}&=\frac{q_{i_1}^{-\langle h_{i_1}, w_{\leq k}\varpi_{i_k}\rangle X}\prod_{j=1}^{X}(1-q_{i_1}^{2j})}{(1-q_{i_1}^2)^X[X]_{i_1}!}D_{s_{i_1}\varpi_{i_1}, \varpi_{i_1}}^{X}D_{w_{\leq k}\varpi_{i_k}, \varpi_{i_k}}\\
&=q_{i_1}^{(X-1-2\langle h_{i_1}, w_{\leq k}\varpi_{i_k}\rangle)X/2}D_{s_{i_1}\varpi_{i_1}, \varpi_{i_1}}^{X}D_{w_{\leq k}\varpi_{i_k}, \varpi_{i_k}}.
\end{align*}
%\qed
\end{proof}
By Claim \ref{cl:Eaction} and $(\Phi_{\bm{i}}\circ\eta_{w, q}^{-1})([D_{s_{i_1}\varpi_{i_1}, \varpi_{i_1}}])=q_{i_1}t_1^{-1}$, we have
\begin{align}
&(\Phi_{\bm{i}}\circ\eta_{w, q}^{-1})([D_{u_{w\lambda}, \Gup_{\lambda}(b_2)}])\label{calc'}\\
&=q_{i_1}^{(c_1+\langle s_{i_1}h_{i_1}, w_{\leq k}\varpi_{i_k}\rangle+1)X/2}t_1^{-X}(\Phi_{\bm{i}}\circ\eta_{w, q}^{-1})([D_{w_{\leq k}\varpi_{i_k}, \varpi_{i_k}}]).\notag
\end{align}
Since our aim is to calculate $(\Phi_{\bm{i}}\circ\eta_{w, q}^{-1})([D_{w_{\leq k}\varpi_{i_k}, \varpi_{i_k}}])$, we next describe $(\Phi_{\bm{i}}\circ\eta_{w, q}^{-1})([D_{u_{w\lambda}, \Gup_{\lambda}(b_2)}])$ in a different way. 
Now we have 
\begin{align}
\eta_{w, q}^{-1}([D_{u_{w\lambda}, \Gup_{\lambda}(b_2)}])&=q^{(\lambda, \weight b_2-w\lambda)}[D_{w\lambda, \lambda}]^{-1}[D_{\Gup_{\lambda}(b_2), u_{\lambda}}]\label{eq:invmin}\\
&=q^{(\lambda, \varpi_{i_k}-w_{\leq k}\varpi_{i_k}+X\alpha_{i_1})}[D_{w\lambda, \lambda}]^{-1}[D_{\Gup_{\lambda}(b_2), u_{\lambda}}].\notag
\end{align}
Moreover,
\begin{align}\label{Feigineq}
&\Phi_{\bm{i}}([D_{\Gup_{\lambda}(b_2), u_{\lambda}}])\\
&=\sum_{\bm{a}=(a_1,\dots, a_{\ell})\in\mathbb{Z}_{\geq 0}^{\ell}}q_{\bm{i}}(\bm{a})(\Gup_{\lambda}(b_2), f_{i_1}^{(a_1)}\cdots f_{i_{\ell}}^{(a_{\ell})}.u_{\lambda})_{\lambda}^{\varphi}t_1^{a_1}\cdots t_{\ell}^{a_{\ell}}\notag\\
&=\sum_{(a_2,\dots, a_{\ell})\in\mathbb{Z}_{\geq 0}^{\ell-1}}q_{\bm{i}}((0, a_2,\dots, a_{\ell}))(\Gup_{\lambda}(b_2), f_{i_2}^{(a_2)}\cdots f_{i_{\ell}}^{(a_{\ell})}.u_{\lambda})_{\lambda}^{\varphi}t_2^{a_2}\cdots t_{\ell}^{a_{\ell}}.\notag
\end{align}
The last equality holds because $e_{i_1}.\Gup_{\lambda}(b_2)=0$ by  Proposition \ref{p:EFaction}. 
Here we prepare one more claim. 
\begin{claim}\label{cl:induction}
Set $\mu_2:=w_{2\leq}\lambda$. Then $D_{u_{\mu_2}, \Gup_{\lambda}(b_2)}=D_{s_{i_1}w_{\leq k}\varpi_{i_k}, \varpi_{i_k}}$. 
\end{claim}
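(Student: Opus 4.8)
The plan is to show that $D_{u_{\mu_2},\Gup_\lambda(b_2)}$ is a \emph{nonzero scalar multiple} of $D_{s_{i_1}w_{\leq k}\varpi_{i_k},\varpi_{i_k}}$; granting this, the claim follows at once. Indeed, since $u_{\mu_2}=u_{w_{2\leq}\lambda}$ and $b_2\in\mathscr{B}(\lambda)$, Proposition \ref{p:unipminor}(2) (applied with $w_{2\leq}$ in place of $w$) says $D_{u_{\mu_2},\Gup_\lambda(b_2)}\in\mathbf{B}^{\up}\cup\{0\}$; being a nonzero multiple of a nonzero element it is itself nonzero, hence lies in $\mathbf{B}^{\up}$, while $D_{s_{i_1}w_{\leq k}\varpi_{i_k},\varpi_{i_k}}$ is a nonzero unipotent quantum minor and so also lies in $\mathbf{B}^{\up}$. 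Two proportional members of the basis $\mathbf{B}^{\up}$ must coincide, so the scalar is forced to equal $1$. This device lets me sidestep the cumulative $q_{i_1}$-power bookkeeping entirely.

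Next I would record the relevant extremal-vector facts. Since $(i_1,\dots,i_\ell)\in I(w)$ we have $\ell(s_{i_1}w)<\ell(w)$, hence $\langle h_{i_1},w\lambda\rangle\leq 0$; put $a:=-\langle h_{i_1},w\lambda\rangle=\langle h_{i_1},\mu_2\rangle\geq 0$. By Definition \ref{d:repform} (writing the reduced word $(i_1,i_2,\dots,i_\ell)$ of $w$), together with the standard properties of extremal weight vectors (cf.\ Proposition \ref{p:extremal} and \cite[Proposition 39.3.7]{Lus:intro}), $u_{w\lambda}=f_{i_1}^{(a)}.u_{\mu_2}$ with $e_{i_1}.u_{\mu_2}=0$, so $u_{\mu_2}$ and $u_{w\lambda}$ sit at the top and bottom of a common $\mathfrak{sl}_{2,i_1}$-string; consequently $f_{i_1}.u_{w\lambda}=0$ and $u_{\mu_2}=e_{i_1}^{(a)}.u_{w\lambda}$. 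In the same way $\ell(s_{i_1}w_{\leq k})=k-1<k=\ell(w_{\leq k})$ gives $e:=\langle h_{i_1},w_{\leq k}\varpi_{i_k}\rangle\leq 0$, $f_{i_1}.u_{w_{\leq k}\varpi_{i_k}}=0$ and $u_{s_{i_1}w_{\leq k}\varpi_{i_k}}=e_{i_1}^{(-e)}.u_{w_{\leq k}\varpi_{i_k}}$. The crucial numerical identity is $a=X+(-e)$, which is immediate from $X=-\langle h_{i_1},w\lambda-w_{\leq k}\varpi_{i_k}\rangle$.

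I would then establish the adjunction formula: for a weight vector $u\in V(\lambda)$, any $v\in V(\lambda)$ and $m\in\mathbb{Z}_{\geq 0}$,
\[
D_{e_i^{(m)}.u,\,v}=\frac{1}{[m]_i!\,(1-q_i^2)^m}\,(e'_i)^m\bigl(D_{u,v}\bigr),
\]
where $(e'_i)^m$ is the $m$-fold composite. This follows directly from the definitions: move $e_i^{(m)}$ across $(\ ,\ )_\lambda^\varphi$ using $(y.u_1,u_2)_\lambda^\varphi=(u_1,\varphi(y).u_2)_\lambda^\varphi$ and $\varphi(e_i^{(m)})=f_i^{(m)}$, rewrite the outcome via the defining property of $D_{u,v}$, then iterate $(f_ix,y)_L=(1-q_i^2)^{-1}(x,e'_i(y))_L$ and invoke nondegeneracy of $(\ ,\ )_L$. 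Applying it with $m=a$, $u=u_{w\lambda}$, $v=\Gup_\lambda(b_2)$ and inserting Claim \ref{cl:Eaction} yields
\[
D_{u_{\mu_2},\Gup_\lambda(b_2)}=\frac{q_{i_1}^{(X-1-2e)X/2}}{[a]_{i_1}!\,(1-q_{i_1}^2)^a}\,(e'_{i_1})^a\!\bigl(D_{s_{i_1}\varpi_{i_1},\varpi_{i_1}}^{\,X}\,D_{w_{\leq k}\varpi_{i_k},\varpi_{i_k}}\bigr).
\]
Since $D_{s_{i_1}\varpi_{i_1},\varpi_{i_1}}$ has weight $-\alpha_{i_1}$ it is a nonzero scalar multiple of $f_{i_1}$, so $D_{s_{i_1}\varpi_{i_1},\varpi_{i_1}}^{\,X}$ is a nonzero multiple of $f_{i_1}^X$. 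Expanding $(e'_{i_1})^a(f_{i_1}^X\cdot D_{w_{\leq k}\varpi_{i_k},\varpi_{i_k}})$ by the skew-derivation rule for $e'_{i_1}$, the summand indexed by $(p,r)$ with $p+r=a$ contains $(e'_{i_1})^p(f_{i_1}^X)$, which vanishes for $p>X$, and $(e'_{i_1})^r(D_{w_{\leq k}\varpi_{i_k},\varpi_{i_k}})$, which by the adjunction formula (now with $u=u_{w_{\leq k}\varpi_{i_k}}$, $v=u_{\varpi_{i_k}}$) is a scalar times $D_{e_{i_1}^{(r)}.u_{w_{\leq k}\varpi_{i_k}},\,u_{\varpi_{i_k}}}$ and vanishes for $r>-e$. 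By $a=X+(-e)$ the only surviving term is $(p,r)=(X,-e)$, where $e_{i_1}^{(-e)}.u_{w_{\leq k}\varpi_{i_k}}=u_{s_{i_1}w_{\leq k}\varpi_{i_k}}$; hence $D_{u_{\mu_2},\Gup_\lambda(b_2)}$ is a nonzero scalar multiple of $D_{s_{i_1}w_{\leq k}\varpi_{i_k},\varpi_{i_k}}$, and the first paragraph closes the argument.

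The only genuinely delicate points I anticipate are the careful tracking of the extremal-vector relations in the second paragraph (on which side of each $i_1$-string a given $u_{v\mu}$ lies, and that the top-to-bottom $e_{i_1}^{(\bullet)}$-coefficient is exactly $1$), and the elementary identity $a=X+(-e)$ which collapses the Leibniz expansion to a single term; everything else is formal manipulation with the two bilinear forms.
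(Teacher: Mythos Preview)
Your argument is correct, but it takes a genuinely different route from the paper's. The paper proves Claim~\ref{cl:induction} directly via Lusztig's braid operator $T_{i_1}$: it checks $(D_{s_{i_1}w_{\leq k}\varpi_{i_k},\varpi_{i_k}},x)_L=(D_{u_{\mu_2},\Gup_\lambda(b_2)},x)_L$ for all $x\in\Uq^-$ by splitting $\Uq^-=\Ker e'_{i_1}\oplus f_{i_1}\Uq^-$, using that both sides vanish on $f_{i_1}\Uq^-$ (since $e_{i_1}.u_{\mu_2}=0$ and $e_{i_1}.u_{s_{i_1}w_{\leq k}\varpi_{i_k}}=0$), and on $\Ker e'_{i_1}=\Uq^-\cap T_{i_1}(\Uq^-)$ applying $T_{i_1}^{-1}$ together with Propositions~\ref{p:braid}, \ref{p:extremal}, \ref{p:kernel} and Claim~\ref{cl:minor}. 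This gives exact equality in one stroke, without ever invoking Claim~\ref{cl:Eaction}.

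Your approach instead leverages Claim~\ref{cl:Eaction} (already established) and the adjunction $D_{e_i^{(m)}.u,v}=\tfrac{1}{[m]_i!(1-q_i^2)^m}(e'_i)^m(D_{u,v})$, then collapses the $e'_{i_1}$-Leibniz expansion of $(e'_{i_1})^a\bigl(f_{i_1}^X D_{w_{\leq k}\varpi_{i_k},\varpi_{i_k}}\bigr)$ to a single term via $a=X+(-e)$; the trick of recognising both sides as elements of $\mathbf{B}^{\up}$ (Proposition~\ref{p:unipminor}) to force the proportionality constant to be $1$ is neat and spares you the $q$-power bookkeeping. What the paper's argument buys is independence from Claim~\ref{cl:Eaction} and a cleaner conceptual picture (braid symmetry transports one matrix coefficient to the other); what yours buys is that once Claim~\ref{cl:Eaction} is in hand, only elementary skew-derivation manipulations and a basis argument are needed, with no appeal to the $T_i$-invariance of the forms.
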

\begin{proof}[Proof of Claim \ref{cl:induction}]
By Propositions \ref{p:braid}, \ref{p:extremal}, \ref{p:kernel} and Claim \ref{cl:minor}, for $x\in \mathbf{U}_q^-$, we have
\begin{align*}
&(D_{s_{i_1}w_{\leq k}\varpi_{i_k}, \varpi_{i_k}}, x)_L\\
&=
\begin{cases}
(u_{s_{i_1}w_{\leq k}\varpi_{i_k}}, x.u_{\varpi_{i_k}})_{\varpi_{i_k}}^{\varphi}&\text{if}\;x\in \mathbf{U}_q^-\cap T_{i_1}(\mathbf{U}_q^-)=\Ker e'_{i_1},\\
0&\text{if}\;x\in f_{i_1}\mathbf{U}_q^-=(\Ker e'_{i_1})^{\perp},
\end{cases}\\
&=\begin{cases}
(u_{w_{\leq k}\varpi_{i_k}}, T_{i_1}^{-1}(x).u_{s_{i_1}\varpi_{i_k}})_{\varpi_{i_k}}^{\varphi}&\text{if}\;x\in \mathbf{U}_q^-\cap T_{i_1}(\mathbf{U}_q^-),\\
0&\text{if}\;x\in f_{i_1}\mathbf{U}_q^-=(\Ker e'_{i_1})^{\perp},
\end{cases}\\
&=\begin{cases}
(u_{w\lambda}, T_{i_1}^{-1}(x).\Gup_{\lambda}(\tilde{f}_{i_1}^{\mathrm{max}}b))_{\lambda}^{\varphi}&\text{if}\;x\in \mathbf{U}_q^-\cap T_{i_1}(\mathbf{U}_q^-),\\
0&\text{if}\;x\in f_{i_1}\mathbf{U}_q^-=(\Ker e'_{i_1})^{\perp},
\end{cases}\\
&=(u_{\mu_2}, x.\Gup_{\lambda}(b_2))_{\lambda}^{\varphi}=(D_{u_{\mu_2}, \Gup_{\lambda}(b_2)}, x)_L.
\end{align*}
This completes the proof.%\qed
\end{proof}
Set $\bm{i}_{2\leq}:=(i_2,\dots, i_{\ell})$ and identify $\mathcal{L}_{\bm{i}_{2\leq}}$ with the subalgebra of $\mathcal{L}_{\bm{i}}$ generated by $t_{2}^{\pm 1},\dots, t_{\ell}^{\pm 1}$. Write 
\begin{align*}
C_2&:=\prod_{j=2}^{k}q_{i_j}^{\langle s_{i_1}w_{\leq j}h_{i_j}, s_{i_1}w_{\leq k}\varpi_{i_k}\rangle(\langle s_{i_1}w_{\leq j}h_{i_j}, s_{i_1}w_{\leq k}\varpi_{i_k}\rangle+1)/2}=\prod_{j=2}^{k}q_{i_j}^{d_j(d_j+1)/2}.
\end{align*}
By our induction assumption, Proposition \ref{p:Feigin} (3) and Claim \ref{cl:induction}, we have
\begin{align*}
&
C_2
t_2^{-\langle w_{\leq 2}h_{i_2}, w_{\leq k}\varpi_{i_k}\rangle}\cdots t_{k}^{-\langle w_{\leq k}h_{i_{k}}, w_{\leq k}\varpi_{i_k}\rangle}
\left(=:C_2\dprod_{j=2,\dots, k} t_j^{-\langle w_{\leq j}h_{i_j}, w_{\leq k}\varpi_{i_k}\rangle}\right)
\\ 
&=(\Phi_{\bm{i}_{2\leq}}\circ \eta_{w_{2\leq}, q}^{-1})([D_{s_{i_1}w_{\leq k}\varpi_{i_k}, \varpi_{i_k}}])\\ 
&
=(\Phi_{\bm{i}_{2\leq}}\circ \eta_{w_{2\leq}, q}^{-1})([D_{u_{\mu_2}, \Gup_{\lambda}(b_2)}])
\\
&=\Phi_{\bm{i}_{2\leq}}(q^{(\lambda, \varpi_{i_k}-w_{\leq k}\varpi_{i_k}+\langle h_{i_1}, w_{\leq k}\varpi_{i_k}\rangle\alpha_{i_1})}[D_{w_{2\leq}\lambda, \lambda}]^{-1}[D_{\Gup_{\lambda}(b_2), u_{\lambda}}])\\ 
&=q^{(\lambda, \varpi_{i_k}-w_{\leq k}\varpi_{i_k}+\langle h_{i_1}, w_{\leq k}\varpi_{i_k}\rangle\alpha_{i_1})}
q_{\bm{i}_{2\leq}}(\bm{c}')^{-1}
t_{\ell}^{-c_{\ell}}\cdots t_{2}^{-c_{2}}\\
&\hspace{11pt}\times\sum_{\bm{a}'=(a_2,\dots, a_{\ell})\in \mathbb{Z}_{\geq 0}^{\ell-1}}q_{\bm{i}_{2\leq}}(\bm{a}')(\Gup_{\lambda}(b_2), f_{i_2}^{(a_2)}\cdots f_{i_{\ell}}^{(a_{\ell})}.u_{\lambda})_{\lambda}^{\varphi}t_2^{a_2}\cdots t_{\ell}^{a_{\ell}},
\end{align*}
where $\bm{c}'=(c_2,\dots, c_{\ell})$ with $c_j:=\langle h_{i_j}, w_{j+1\leq} \lambda\rangle$. Therefore, 
\begin{align}
&\sum_{\bm{a}'=(a_2,\dots, a_{\ell})\in \mathbb{Z}_{\geq 0}^{\ell-1}}q_{\bm{i}_{2\leq}}(\bm{a}')(\Gup_{\lambda}(b_2), f_{i_2}^{(a_2)}\cdots f_{i_{\ell}}^{(a_{\ell})}.u_{\lambda})_{\lambda}^{\varphi}t_2^{a_2}\cdots t_{\ell}^{a_{\ell}}\label{Feigineq'}\\
&=
C_2q^{-(\lambda, \varpi_{i_k}-w_{\leq k}\varpi_{i_k}+\langle h_{i_1}, w_{\leq k}\varpi_{i_k}\rangle\alpha_{i_1})}
q_{\bm{i}_{2\leq}}(\bm{c}')
t_{2}^{c_{2}}\cdots t_{\ell}^{c_{\ell}}
\dprod_{j=2,\dots, k}
t_j^{-\langle w_{\leq j}h_{i_j}, w_{\leq k}\varpi_{i_k}\rangle}.\notag
\end{align}
Combining \eqref{eq:invmin}, \eqref{Feigineq} and \eqref{Feigineq'}, we obtain the following equality ($\bm{c}=(c_1,\dotsm c_{\ell}), c_1:=\langle h_{i_1}, w_{2\leq} \lambda\rangle$):
\begin{align}
&(\Phi_{\bm{i}}\circ\eta_{w, q}^{-1})([D_{u_{w\lambda}, \Gup_{\lambda}(b_2)}])\label{calc}\\
&=q^{(\lambda, \varpi_{i_k}-w_{\leq k}\varpi_{i_k}+X\alpha_{i_1})}\Phi_{\bm{i}}([D_{w\lambda, \lambda}]^{-1}[D_{\Gup_{\lambda}(b_2), u_{\lambda}}])\notag\\ 
&=C_2q_{i_1}^{-\langle h_{i_1}, \lambda\rangle\langle h_{i_1}, w\lambda\rangle}q_{\bm{i}}(\bm{c})^{-1}q_{\bm{i}_{2\leq}}(\bm{c}')t_{\ell}^{-c_{\ell}}\dots t_{1}^{-c_{1}}t_{2}^{c_{2}}\cdots t_{\ell}^{c_{\ell}}\dprod_{j=2,\dots, k}t_j^{-\langle w_{\leq j}h_{i_j}, w_{\leq k}\varpi_{i_k}\rangle}\notag\\ 
&=C_2q_{i_1}^{c_1(c_1+1)/2}t_{1}^{-c_{1}}\dprod_{j=2,\dots, k}t_j^{-\langle w_{\leq j}h_{i_j}, w_{\leq k}\varpi_{i_k}\rangle}.\notag
\end{align}
Recall that $X=-\langle h_{i_1}, w\lambda-w_{\leq k}\varpi_{i_k}\rangle=c_1-\langle s_{i_1}h_{i_1}, w_{\leq k}\varpi_{i_k}\rangle$. By \eqref{calc} and \eqref{calc'}, we obtain
\begin{align*}
&(\Phi_{\bm{i}}\circ\eta_{w, q}^{-1})([D_{w_{\leq k}\varpi_{i_k}, \varpi_{i_k}}])\\
&=C_2q_{i_1}^{-(c_1+\langle s_{i_1}h_{i_1}, w_{\leq k}\varpi_{i_k}\rangle+1)X/2+c_1(c_1+1)/2}t_{1}^{-\langle s_{i_1}h_{i_1}, w_{\leq k}\varpi_{i_k}\rangle}\dprod_{j=2,\dots, k}t_j^{-\langle w_{\leq j}h_{i_j}, w_{\leq k}\varpi_{i_k}\rangle}\\
&=C_2q_{i_1}^{\langle s_{i_1}h_{i_1}, w_{\leq k}\varpi_{i_k}\rangle(\langle s_{i_1}h_{i_1}, w_{\leq k}\varpi_{i_k}\rangle+1)/2}t_{1}^{-\langle s_{i_1}h_{i_1}, w_{\leq k}\varpi_{i_k}\rangle}\dprod_{j=2,\dots, k}t_j^{-\langle w_{\leq j}h_{i_j}, w_{\leq k}\varpi_{i_k}\rangle}.
\end{align*}
This completes the proof. %\qed
\end{proof}
The following is a direct corollary of Theorem \ref{t:monomial}. These equalities are exact quantum analogues of the Chamber Ansatz formulae for unipotent cells \cite[Theorem 1.4]{BFZ}, \cite[Theorem 1.4]{BZ:Schu}. 
\begin{corollary}\label{c:CA}
Let $w\in W$ and $\bm{i}=(i_1,\dots, i_{\ell})\in I(w)$. For $j=1,\dots, \ell$, set 
\[
D_{w_{\leq j}\varpi_{i_{j}}, \varpi_{i_{j}}}^{\prime\; (\bm{i})}:=(\Phi_{\bm{i}}\circ \eta_{w, q}^{-1})([D_{w_{\leq j}\varpi_{i_{j}}, \varpi_{i_{j}}}]).
\]
By Theorem \ref{t:monomial}, these elements are Laurent monomials in $\mathcal{L}_{\bm{i}}$. Then, for $k=1,\dots, \ell$, 
\begin{align}
t_k=q^{N_k}(D_{w_{\leq k-1}\varpi_{i_{k}}, \varpi_{i_{k}}}^{\prime\; (\bm{i})})^{-1}(D_{w_{\leq k}\varpi_{i_{k}}, \varpi_{i_{k}}}^{\prime\; (\bm{i})})^{-1}\dprod_{j\in I\setminus\{i_k\}}(D_{w_{\leq k}\varpi_{j}, \varpi_{j}}^{\prime\; (\bm{i})})^{-a_{j, i_k}}\label{eq:chamber}
\end{align}
for some $N_k\in \mathbb{Z}$, here $\overrightarrow{\prod}$ stands for the ordered multiplication according to an arbitrarily fixed ordering on $I\setminus\{i_k\}$. More precisely, $N_k$ is given by 
\begin{align*}
N_k=&\frac{(\alpha_{i_k}, \alpha_{i_k})}{2}+(\varpi_{i_{k}}, w_{\leq k-1}\varpi_{i_{k}}-\varpi_{i_{k}})-\sum_{j\in I\setminus\{i_k\}}\left(\begin{array}{c}
-a_{j, i_k}\\
2
\end{array}\right)(\varpi_j, w_{\leq k}\varpi_j-\varpi_j)\\
&-\sum_{j, j'\in I\setminus\{i_k\};j<j'}a_{j, i_k}a_{j', i_k}(\varpi_j, w_{\leq k}\varpi_{j'}-\varpi_{j'}).
\end{align*}
\end{corollary}
\begin{proof}
The validity of the equality \eqref{eq:chamber} for some integer $N_k$ can be proved in exactly the same way as \cite[Theorem 4.3]{BZ:Schu} by Theorem \ref{t:monomial} and the relations among $t_j$'s. Henceforth we calculate $N_k$ explicitly. We prepare the operations called \emph{the dual bar-involutions}. There exists a $\mathbb{Q}$-linear automorphism $\sigma$ on $\mathbf{A}_q[N_-^{w}]$ characterized by  
\begin{align*}
\sigma(q)=q^{-1}&&&\sigma([f_i])=-q_i^2[f_i]&&&\sigma(xy)=q^{(\weight x, \weight y)}\sigma(y)\sigma(x)
\end{align*}
for $i\in I$ and homogeneous elements $x, y\in \mathbf{A}_q[N_-^{w}]$ (see \cite[Proposition 4.9]{KO2}). Note that $\sigma((1-q_i^2)f_i)=(1-q_i^2)f_i$. In fact, every element of the projected dual canonical basis $[\mathbf{B}^{\mathrm{up}}]$ is fixed by $\sigma$. Moreover, we have $\eta_{w, q}\circ \sigma=\sigma \circ \eta_{w, q}$ \cite[Theorem 6.1]{KO2}.

On the other hand, define $\sigma_{\bm{i}}$ as a $\mathbb{Q}$-linear automorphism on $\mathcal{L}_{\bm{i}}$ given by $f(q)\cdot q_{\bm{i}}(\bm{a})t_1^{a_1}\cdots t_{\ell}^{a_{\ell}}\mapsto f(q^{-1})\cdot q_{\bm{i}}(\bm{a})t_1^{a_1}\cdots t_{\ell}^{a_{\ell}}$ for $\bm{a}=(a_1,\dots, a_{\ell})\in \mathbb{Z}^{\ell}$ and $f(q)\in \mathbb{Q}(q)$ (see Definition \ref{d:Feigin} for the definition of $q_{\bm{i}}(\bm{a})$, which is obviously extended to $ \mathbb{Z}^{\ell}$). Then $\sigma_{\bm{i}}$ satisfies 
\begin{align*}
\sigma_{\bm{i}}(q)=q^{-1}&&&\sigma_{\bm{i}}(t_j)=t_j&&&\sigma_{\bm{i}}(xy)=q^{(\weight x, \weight y)}\sigma_{\bm{i}}(y)\sigma_{\bm{i}}(x)
\end{align*}
for $j=1,\dots, \ell$ and homogeneous elements $x, y\in \mathcal{L}_{\bm{i}}$, where $\mathcal{L}_{\bm{i}}$ is regarded as a $Q$-graded algebra by $\weight t_j=-\alpha_{i_j}$, $j=1,\dots, \ell$. Remark that this involution $\sigma_{\bm{i}}$ is slightly different from the bar-involution in \cite{BR}. We can easily check $\sigma_{\bm{i}}\circ \Phi_{\bm{i}}=\Phi_{\bm{i}}\circ \sigma$.

Let us return to the calculation of $N_k$. It follows from Theorem \ref{t:monomial} and the equality \eqref{eq:chamber} for some integer $N_k$ that there exist $n_1, n_2\in \mathbb{Z}$ and $\bm{c}=(c_1,\dots, c_{k-1}, 0,\dots, 0)\in \mathbb{Z}^{\ell}$ such that 
\begin{align*}
&D_{w_{\leq k}\varpi_{i_{k}}, \varpi_{i_{k}}}^{\prime\; (\bm{i})}D_{w_{\leq k-1}\varpi_{i_{k}}, \varpi_{i_{k}}}^{\prime\; (\bm{i})}=q^{n_1}\cdot q_{i_k}q_{\bm{i}}(\bm{c})t_1^{c_1}\cdots t_{k-1}^{c_{k-1}}t_k^{-1},\\
&\dprod_{j\in I\setminus\{i_k\}}(D_{w_{\leq k}\varpi_{j}, \varpi_{j}}^{\prime\; (\bm{i})})^{-a_{j, i_k}}=q^{n_2}\cdot q_{\bm{i}}(\bm{c})t_1^{c_1}\cdots t_{k-1}^{c_{k-1}}. 
\end{align*}
Then $N_k=\frac{(\alpha_{i_k}, \alpha_{i_k})}{2}+n_1-n_2$. Remark that the first (resp.~second) equality above implies that $q^{-n_1}$ (resp.~$q^{-n_2}$) times the left-hand side is $\sigma_{\bm{i}}$-invariant. By the way, since $\sigma_{\bm{i}}\circ (\Phi_{\bm{i}}\circ \eta_{w, q}^{-1})=(\Phi_{\bm{i}}\circ \eta_{w, q}^{-1})\circ \sigma$, the integers $n_1, n_2$ are unique integers such that $q^{-n_1}[D_{w_{\leq k}\varpi_{i_{k}}, \varpi_{i_{k}}}][D_{w_{\leq k-1}\varpi_{i_{k}}, \varpi_{i_{k}}}]$ and $q^{-n_2}\overrightarrow{\prod}_{j\in I\setminus\{i_k\}}[D_{w_{\leq k}\varpi_{j}, \varpi_{j}}]^{-a_{j, i_k}}$ are $\sigma$-invariant. Then, by using Proposition \ref{p:comm}, we can directly check that 
\begin{align*}
n_1=&(\varpi_{i_{k}}, w_{\leq k-1}\varpi_{i_{k}}-\varpi_{i_{k}}),\\
n_2=&\sum_{j\in I\setminus\{i_k\}}\left(\begin{array}{c}
-a_{j, i_k}\\
2
\end{array}\right)(\varpi_j, w_{\leq k}\varpi_j-\varpi_j)\\
&+\sum_{j, j'\in I\setminus\{i_k\};j<j'}a_{j, i_k}a_{j', i_k}(\varpi_j, w_{\leq k}\varpi_{j'}-\varpi_{j'}),
\end{align*}
which completes the calculation of $N_k$. %\qed
\end{proof}
\appendix
\section{Comparison with the Cauchon generators}\label{s:Cauchon}
In this appendix, we clarify an explicit relation between our quantum Chamber Ansatz formulae and the description of Cauchon generators given by Geiger-Yakimov \cite{GeY} and Lenagan-Yakimov \cite{LeY}. 

We review the results in \cite{GeY, LeY} briefly. For $w\in W$ and $\bm{i}=(i_1,\dots, i_{\ell})\in I(w)$, set 
\begin{align*}
\mathbf{A}_q[N_{-}(w)]&:=\spn_{\mathbb{Q}(q)}\left\{(T_{i_{1}}^{-1}\cdots T_{i_{\ell-1}}^{-1})(f_{i_{\ell}}^{c_{\ell}})\cdots T_{i_{1}}^{-1}(f_{i_{2}}^{c_{2}})f_{i_{1}}^{c_{1}}
\mid (c_1,\dots, c_{\ell})\in \mathbb{Z}_{\geq 0}^{\ell}\right\},\\
&=\spn_{\mathbb{Q}(q)}\left\{ D_{u_{w\lambda}, u}\mid u\in V(\lambda), \lambda\in P_{+}\right\}. 
\end{align*}
Then it is known that $\mathbf{A}_q[N_{-}(w)]$ is a $\mathbb{Q}(q)$-subalgebra of $\mathbf{U}_q^-$, and the first definition does not depend on the choice of $\bm{i}\in I(w)$ \cite[subsection 2.2]{DKP}, \cite[Proposition 40.2.1]{Lus:intro}. See, for example,  \cite[Propositions 5.14, 5.17]{KO2} for the second presentation. This subalgebra $\mathbf{A}_q[N_{-}(w)]$ is called \emph{a quantum unipotent subgroup}. 

Recall the set $\mathcal{D}_{w}$ in Proposition \ref{p:comm}. Then, by \cite[Corollary 6.18]{Kimura:qunip}, $\mathcal{D}_{w}$ is an Ore set of $\mathbf{A}_q[N_{-}(w)]$, hence we can define the localization 
\[
\mathbf{A}_q[N_{-}(w)\cap\dot{w}G_{0}]:=\mathbf{A}_q[N_{-}(w)][\mathcal{D}_{w}^{-1}].
\]
See \cite{KO2} for the meaning of this notation. In \cite{LeY}, this algebra (modulo some difference of conventions) is considered as a quantum analogue of the coordinate ring of $N_-^w$ ($N_-^w$ is isomorphic to $R_{e, w}$ in \cite{LeY}). In fact, the algebra $\mathbf{A}_q[N_{-}(w)\cap\dot{w}G_{0}]$ is isomorphic to $\mathbf{A}_q[N_{-}^w]$, but this isomorphism, called \emph{the twist isomorphism} in \cite{KO2}, is given in a non-trivial way:
\begin{proposition}[{\cite[Theorem 5.19]{KO2}}]\label{p:BZisom} There exists a $\mathbb{Q}(q)$-algebra isomorphism $\gamma_{w,q}\colon\mathbf{A}_q[N_{-}^{w}]\to\mathbf{A}_q[N_{-}(w)\cap\dot{w}G_{0}]$ given by 
\begin{align*}
[D_{u,u_{\lambda}}] & \mapsto q^{-(\lambda,\weight u-\lambda)}D_{w\lambda,\lambda}^{-1}D_{u_{w\lambda},u},&
[D_{w\lambda,\lambda}]^{-1} & \mapsto q^{(\lambda,w\lambda-\lambda)}D_{w\lambda,\lambda}
\end{align*}
for a weight vector $u\in V(\lambda)$ and $\lambda\in P_{+}$. 
\end{proposition}
\begin{remark}\label{r:BZisom}
The twist isomorphism $\gamma_{w,q}$ is of the same form as the twist map $\eta_{w,q}$. In particular, $\gamma_{w,q}^{-1}(x)=\eta_{w,q}^{-1}([x])$ for $x\in \mathbf{A}_q[N_{-}(w)]$. See \cite{KO2} (cf. also \cite[subsection 3.8, section 4]{Yak:inv}) for more details of $\gamma_{w,q}$.   
\end{remark}
For $w\in W$ and $\bm{i}=(i_1,\dots, i_{\ell})\in I(w)$, set 
\[
F_{-1}(\beta_{k}, \bm{i}):=(1-q_{i_k}^2)(T_{i_{1}}^{-1}\cdots T_{i_{k-1}}^{-1})(f_{i_{k}})
\]
for $k=1,\dots, \ell$, where $\beta_{k}:=w_{\leq k-1}\alpha_{i_k}(=-\weight F_{-1}(\beta_{k}, \bm{i}))$. Then, in fact, $F_{-1}(\beta_{k}, \bm{i})$ is an element of dual canonical basis \cite[Proposition 4.26]{Kimura:qunip}. By the Levendorskii-Soibelman straightening law (see, for example, \cite[Theorem 4.27]{Kimura:qunip}), the quantum unipotent subgroup $\mathbf{A}_q[N_{-}(w)]$ is presented as an iterated Ore extension: 
\begin{align}
\mathbf{A}_q[N_{-}(w)]=\mathbb{Q}(q)[F_{-1}(\beta_{\ell}, \bm{i})][F_{-1}(\beta_{\ell-1}, \bm{i}); \sigma_{\ell-1}, \delta_{\ell-1}]\cdots [F_{-1}(\beta_{1}, \bm{i}); \sigma_{1}, \delta_{1}],  \label{eq:CGL}
\end{align}
here $\sigma_{k}$ is an automorphism and $\delta_k$ is a left $\sigma_k$-skew derivation of the subalgebra 
$\mathbb{Q}(q)[F_{-1}(\beta_{\ell}, \bm{i})]$
$[F_{-1}(\beta_{\ell-1}, \bm{i}); \sigma_{\ell-1}, \delta_{\ell-1}]\cdots [F_{-1}(\beta_{k+1}, \bm{i}); \sigma_{k+1}, \delta_{k+1}]$ for $k=1,\dots, \ell-1$. Moreover, the presentation \eqref{eq:CGL} gives rise to a (torsion-free symmetric) Cauchon-Goodearl-Letzter (CGL) extension. See, for example, \cite[subsection 2.4]{GeY}, \cite[subsection 2.2]{LeY} and references therein for the precise definitions of each terminologies. 

Cauchon's method of deleting derivations \cite[section 3]{Cau}, which is applicable to all CGL extension, provides non-zero elements $y_{k}, k=1,\dots, \ell$ of the skew field $\fraction (\mathbf{A}_q[N_{-}(w)])$ of fractions of $\mathbf{A}_q[N_{-}(w)]$ associated to the presentation \eqref{eq:CGL}, which satisfy the following properties: 
\begin{itemize}
\item[(1)] the $\mathbb{Q}(q)$-subalgebra of $\fraction (\mathbf{A}_q[N_{-}(w)])$ generated by $y_{k}^{\pm 1}, k=1,\dots, \ell$ is isomorphic to the quantum torus subject to the relations; 
\begin{align*}
y_jy_k=q^{-(\beta_j, \beta_k)}y_ky_j
\end{align*}
for $1\leq j< k\leq \ell$. We write this quantum torus as $\mathcal{Y}_{\bm{i}}$. 
\item[(2)] $\mathbf{A}_q[N_{-}(w)]\subset \mathcal{Y}_{\bm{i}}$ (Note that $y_{k}\notin\mathbf{A}_q[N_{-}(w)]$ in general). 
\end{itemize}
The elements $y_{k}, k=1,\dots, \ell$ are called \emph{Cauchon generators}. See \cite[subsection 2.4]{GeY} for a concise summary of their precise construction. Here we should remark that Cauchon's method produces the elements $y_{k}, k=1,\dots, \ell$ in order, and we denote the $k$-th element by $y_k$, that is, our $(y_{\ell},\dots, y_1)$ corresponds to $(x_1^{(2)}, \dots, x_{\ell}^{(2)})$ in \cite[subsection 2.4]{GeY}. Note that our $(F_{-1}(\beta_{\ell}, \bm{i}),\dots, F_{-1}(\beta_{1}, \bm{i}))$ corresponds to $(x_1^{(\ell+1)}, \dots, x_{\ell}^{(\ell+1)})$. 

\begin{remark}
The Cauchon generators $y_{k}, k=1,\dots, \ell$ are not determined only from the algebra structure of $\mathbf{A}_q[N_{-}(w)]$, but depend on the presentation of $\mathbf{A}_q[N_{-}(w)]$ as a CGL-extension. 
\end{remark}
 We introduce one more convenient notation. For $\bm{a}=(a_1,\dots, a_{\ell})\in \mathbb{Z}^{\ell}$, set 
\[
y^{\bm{a}}:=q^{\sum_{k=1}^{\ell}a_k(a_k-1)(\beta_k, \beta_k)/4+\sum_{k>k'}a_ka_{k'}(\beta_k, \beta_{k'})}y_1^{a_1}\cdots y_{\ell}^{a_{\ell}}. 
\]

Geiger-Yakimov and Lenagan-Yakimov gave a simple (but highly non-trivial)  explicit description of $y_{k}, k=1,\dots, \ell$ by using unipotent quantum minors as follows.
\begin{proposition}[{\cite[Theorem 3.2]{GeY}, \cite[Theorem 8.1]{LeY}}]\label{p:Cauchon}
For $k=1,\dots, \ell$, we have 
\[
D_{w_{\leq k}\varpi_{i_{k}}, \varpi_{i_{k}}}=y^{\bm{c}_{\leq k}}
\]
in $\fraction (\mathbf{A}_q[N_{-}(w)])$, where $\bm{c}_{\leq k}=(c_1^{(k)},\dots, c_{\ell}^{(k)})$ is given by 
\[
c_{j}^{(k)}:=\begin{cases}
1&\text{if }i_j=i_k\ \text{and }j\leq k,\\
0&\text{otherwise} 
\end{cases}
\]
for $j=1,\dots, \ell$. 
\end{proposition}
\begin{remark}
Proposition \ref{p:Cauchon} is the direct translation of the statements in \cite{GeY, LeY} by our conventions (The statement \cite[Theorem 8.1]{LeY} is more general). Here we explain one way of translation from the conventions in \cite{GeY} to ours: 

We identify $\mathcal{U}^-$ in \cite{GeY} with our $\mathbf{U}_q^{-}$ in the obvious way. Then $b_{y, w}^{\lambda}$, $y, w\in W$ and $\lambda\in P_+$ in \cite{GeY} is equal to our $q^{-(w\lambda-y\lambda, w\lambda-y\lambda)/2}\ast (D_{w\lambda, y\lambda})$. Moreover, $\mathcal{U}^-[w^{-1}]$ in \cite{GeY} is isomorphic to our $\mathbf{A}_q[N_{-}(w)]$ via 
$\ast\circ \Theta_{w}$, here $\Theta_{w}$ is the map \cite[(6.1)]{LeY}, which they call the quantum twist map. By \cite[Proposition 3.3, Proposition 3.16, Theorem 3.22]{KO1} and the observation above, the isomorphism $\ast\circ \Theta_{w}$ gives the following correspondence:  
\begin{align*}
(T_{i_{\ell}}\cdots T_{i_{k+1}})(f_{i_{k}})&\mapsto (1-q_{i_k}^2)^{-1}F_{-1}(\beta_{k}, \bm{i}),\\
\Delta_{\bm{i}^{\mathrm{rev}}, \ell-k+1}:=b_{s_{i_{\ell}}\cdots s_{i_{k+1}}, w^{-1}}^{\varpi_{i_k}}&\mapsto q^{-(w_{\leq k}\varpi_{i_k}-\varpi_{i_k}, w_{\leq k}\varpi_{i_k}-\varpi_{i_k})/2}D_{w_{\leq k}\varpi_{i_k}, \varpi_{i_k}},
\end{align*}
here $\bm{i}^{\mathrm{rev}}:=(i_{\ell},\dots, i_1)$. In particular, the presentation of  $\mathcal{U}^-[w^{-1}]$ in \cite[(2.16)]{GeY} associated to $\bm{i}^{\mathrm{rev}}$ is transferred to the presentation \eqref{eq:CGL} of $\mathbf{A}_q[N_{-}(w)]$ modulo scalar multiple of generators $F_{-1}(\beta_{k}, \bm{i})$. Therefore, taking this scalar multiple and the relabeling of Cauchon generators into account, we can deduce Proposition \ref{p:Cauchon} from \cite[Theorem 3.2]{GeY} associated with $\mathcal{U}^-[w^{-1}]$ and $\bm{i}^{\mathrm{rev}}$.
\end{remark}
By Proposition \ref{p:Cauchon}, the inclusion $\mathbf{A}_q[N_{-}(w)]\subset \mathcal{Y}_{\bm{i}}$ extends to $\mathbf{A}_q[N_{-}(w)\cap\dot{w}G_{0}]\subset \mathcal{Y}_{\bm{i}}$. Hence we have an injective algebra homomorphism 
\[
\Psi_{\bm{i}}\colon \mathbf{A}_q[N_{-}^{w}]\xrightarrow[]{\gamma_{w,q}}\mathbf{A}_q[N_{-}(w)\cap\dot{w}G_{0}]\hookrightarrow \mathcal{Y}_{\bm{i}}. 
\]

Now we have two kinds of embedding of $\mathbf{A}_q[N_{-}^{w}]$ into quantum tori: 
\[
\mathcal{L}_{\bm{i}}\xleftarrow[]{\Phi_{\bm{i}}} \mathbf{A}_q[N_{-}^{w}]\xrightarrow[]{\Psi_{\bm{i}}} \mathcal{Y}_{\bm{i}}. 
\]

We conclude this appendix by clarifying an explicit relation between these embeddings. 
\begin{theorem}\label{t:comparison}
Let $w\in W$ and $\bm{i}=(i_1,\dots, i_{\ell})\in I(w)$. For $k=1,\dots, \ell$, define $\bm{a}_{\leq k}:=(a_1^{(k)},\dots, a_{\ell}^{(k)})$ by 
\[
a_{j}^{(k)}:=\begin{cases}
-a_{i_j, i_k}&\text{for } j<k,\\
-1&\text{for } j=k,\\
0&\text{for } j>k. 
\end{cases}
\]
Then the assignment
\[
t_k\mapsto y^{\bm{a}_{\leq k}}
\]
defines a $\mathbb{Q}(q)$-algebra isomorphism $M_{\bm{i}}\colon \mathcal{L}_{\bm{i}}\to \mathcal{Y}_{\bm{i}}$ such that $M_{\bm{i}}\circ \Phi_{\bm{i}}=\Psi_{\bm{i}}$. 
\end{theorem}
\begin{proof}
Since $\mathbf{A}_q[N_-^w]$ is regarded as a subalgebra of its skew field $\fraction (\mathbf{A}_q[N_-^w])$ of fractions (cf.~Proposition \ref{p:BZisom}), we can consider a $\mathbb{Q}(q)$-subalgebra $\mathcal{A}_{\bm{i}}$ of $\fraction (\mathbf{A}_q[N_-^w])$ generated by $\mathbf{A}_q[N_-^w]$ and $(\eta_{w,q}^{-1}([D_{w_{\leq k}\varpi_{i_{k}}, \varpi_{i_{k}}}]))^{-1}$, $k=1,\dots, \ell$. Then, by Theorem \ref{t:monomial}, $\Phi_{\bm{i}}$ extends to an isomorphism from $\mathcal{A}_{\bm{i}}$ to $\mathcal{L}_{\bm{i}}$, denoted again by $\Phi_{\bm{i}}$. On the other hand, by Remark \ref{r:BZisom} and Proposition \ref{p:Cauchon}, we have 
\begin{align}
\Psi_{\bm{i}}(\eta_{w,q}^{-1}([D_{w_{\leq k}\varpi_{i_{k}}, \varpi_{i_{k}}}]))=\Psi_{\bm{i}}(\gamma_{w, q}^{-1}(D_{w_{\leq k}\varpi_{i_{k}}, \varpi_{i_{k}}}))=y^{\bm{c}_{\leq k}}.\label{eq:phi}
\end{align}
Hence $\Psi_{\bm{i}}$ also extends to an isomorphism from $\mathcal{A}_{\bm{i}}$ to $\mathcal{Y}_{\bm{i}}$, denoted again by $\Psi_{\bm{i}}$. Therefore $\mathcal{L}_{\bm{i}}$ is isomorphic to $\mathcal{Y}_{\bm{i}}$ via $M_{\bm{i}}:=\Psi_{\bm{i}}\circ \Phi_{\bm{i}}^{-1}$. It remains to show that $M_{\bm{i}}(t_k)=y^{\bm{a}_{\leq k}}$ for $k=1,\dots, \ell$. By Corollary \ref{c:CA} and \eqref{eq:phi}, we have
\begin{align*}
M_{\bm{i}}(t_k)&=q^{N_k}M_{\bm{i}}\left((D_{w_{\leq k-1}\varpi_{i_{k}}, \varpi_{i_{k}}}^{\prime\; (\bm{i})})^{-1}(D_{w_{\leq k}\varpi_{i_{k}}, \varpi_{i_{k}}}^{\prime\; (\bm{i})})^{-1}\dprod_{j\in I\setminus\{i_k\}}(D_{w_{\leq k}\varpi_{j}, \varpi_{j}}^{\prime\; (\bm{i})})^{-a_{j, i_k}}\right)\\
&=q^{N_k}(y^{\bm{c}_{\leq k^-}})^{-1}(y^{\bm{c}_{\leq k}})^{-1}\dprod_{j\in I\setminus\{i_k\}}(y^{\bm{c}_{\leq k^-(j)}})^{-a_{j, i_k}}=q^{n_k}y^{\bm{a}_{\leq k}}
\end{align*}
for some $n_k\in \mathbb{Z}$, here $k^-(j):=\max(\{k'\mid k'<k, i_{k'}=j\}\cup\{0\})$, $k^-:=k^-(i_k)$ and $c_{\leq 0}:=(0,\dots, 0)$. To show that $n_k=0$, we prepare the dual bar-involution $\sigma'_{\bm{i}}$ on $\mathcal{Y}_{\bm{i}}$. Define a $\mathbb{Q}$-linear automorphism $\sigma'_{\bm{i}}\colon \mathcal{Y}_{\bm{i}}\to \mathcal{Y}_{\bm{i}}$ by $f(q)y^{\bm{a}}\mapsto f(q^{-1})y^{\bm{a}}$ for $f(q)\in \mathbb{Q}(q)$ and $\bm{a}\in \mathbb{Z}^{\ell}$. Then $\sigma'_{\bm{i}}$ satisfies 
\begin{align*}
\sigma'_{\bm{i}}(xy)=q^{(\weight x, \weight y)}\sigma'_{\bm{i}}(y)\sigma'_{\bm{i}}(x)
\end{align*}
for homogeneous elements $x, y\in \mathcal{Y}_{\bm{i}}$, where $\mathcal{Y}_{\bm{i}}$ is regarded as a $Q$-graded algebra by $\weight y_j=-\beta_j$, $j=1,\dots, \ell$. This $Q$-grading on $\mathcal{Y}_{\bm{i}}$ is compatible with the $Q$-grading on $\mathbf{A}_q[N_{-}(w)]$ via $\mathbf{A}_q[N_{-}(w)]\subset \mathcal{Y}_{\bm{i}}$ by the construction of Cauchon generators (see, for example, \cite[subsection 2.4]{GeY}). 

Recall the dual bar-involution $\sigma_{\bm{i}}$ on $\mathcal{L}_{\bm{i}}$ (and $\sigma$ on $\mathbf{A}_q[N_-^w]$) in the proof of Corollary \ref{c:CA}. Then we have $\sigma'_{\bm{i}}\circ M_{\bm{i}}=M_{\bm{i}}\circ \sigma_{\bm{i}}$ because the images of $\eta_{w, q}^{-1}([D_{w_{\leq k}\varpi_{i_k}, \varpi_{i_k}}])$ ($k=1,\dots, \ell$) under $\Phi_{\bm{i}}$ and $\Psi_{\bm{i}}$ are fixed by $\sigma_{\bm{i}}$ and $\sigma'_{\bm{i}}$, respectively, 
these elements together with their inverses generate $\mathcal{L}_{\bm{i}}$ and $\mathcal{Y}_{\bm{i}}$, respectively, and $\sigma_{\bm{i}}$ and $\sigma'_{\bm{i}}$ satisfy the same formulae with respect to the multiplication (note that $M_{\bm{i}}$ is a weight preserving isomorphism). Therefore, 
\[
q^{n_k}y^{\bm{a}_{\leq k}}=M_{\bm{i}}(t_k)=M_{\bm{i}}(\sigma_{\bm{i}}(t_k))=\sigma'_{\bm{i}}(M_{\bm{i}}(t_k))=q^{-n_k}y^{\bm{a}_{\leq k}}. 
\]
Hence we obtain $n_k=0$. %\qed
\end{proof}
\begin{remark}
We recall the notation in Introduction. Since $\Phi_{\bm{i}}$ is a quantum analogue of the torus embedding $(\mathbb{C}^{\times})^{\ell}\to N_-^w$ given by 
\[
(t_1,\dots, t_{\ell})\mapsto \exp(t_1f_{i_1})\cdots\exp(t_{\ell}f_{i_{\ell}}),
\]
Theorem \ref{t:comparison} implies that $\Psi_{\bm{i}}$ is a quantum analogue of the torus embedding $(\mathbb{C}^{\times})^{\ell}\to N_-^w$ given by 
\begin{align*}
(y_1,\dots, y_{\ell})\mapsto &\exp(y_1^{-1}f_{i_1})\cdots\exp(y_k^{-1}\prod_{j<k}y_j^{-a_{i_j, i_k}}f_{i_k})\cdots \exp(y_{\ell}^{-1}\prod_{j<\ell}y_j^{-a_{i_j, i_{\ell}}}f_{i_{\ell}})\\
&=\exp(y_1^{-1}f_{i_1})y_1^{h_{i_1}}\cdots\exp(y_k^{-1}f_{i_k})y_k^{h_{i_k}}\cdots \exp(y_{\ell}^{-1}f_{i_{\ell}})y_{\ell}^{h_{i_{\ell}}}\prod_{k=1}^{\ell}(y_{k}^{-1})^{h_{i_{k}}},
\end{align*}
here $\mathbb{C}^{\times} \to H, y\mapsto y^{h_i}$ ($i\in I$) is the $i$-th simple coroot of $H$. For the second presentation, we use the formula $y^{h_i}\exp(y'f_{j})=\exp(y^{-a_{ij}}y'f_{j})y^{h_i}$. The embedding of the latter form  appears, for example, in the theory of \emph{geometric crystals} \cite[subsection 4.4]{BK}, \cite[subsection 4.3]{Nak}. Note that this remark makes sense in the setting of symmetrizable Kac-Moody groups (see \cite{Nak}).
\end{remark}
\begin{acknowledgment*}
The author is deeply grateful to Yoshiyuki Kimura for introducing him to this subject. He would like to express his sincere gratitude to his supervisor Yoshihisa Saito for unremitting support and encouragement. He wishes to thank Bernard Leclerc, Bea Schumann and Yuki Kanakubo for enlightening comments. He is also thankful to the anonymous referees whose suggestions  significantly improve this paper.  
\end{acknowledgment*}

\def\cprime{$'$}

\end{document}